 \numberwithin{equation}{section}
\newtheorem{thm}{Theorem}[section]
\newtheorem{proposition}[thm]{Proposition}
\newtheorem{remark}[thm]{Remark}
\newtheorem{definition}[thm]{Definition}
\newtheorem{corollary}[thm]{Corollary}
\newtheorem{lemma}[thm]{Lemma}
\newtheorem{assumption}[thm]{Assumption}
\newcommand{\qed}{\hfill\rule{2mm}{3mm}\vspace{4mm}}
\def\cB{\mbox{${\cal B}$}}
\def\cF{\mbox{${\cal F}$}}
\def\cH{\mbox{${\cal H}$}}
\def\cL{\mbox{${\cal L}$}}
\def\cG{\mbox{${\cal G}$}}
\def\cT{\mbox{${\cal T}$}}
\def\proof{\noindent {\bf Proof. }\ }
\def\wh{\widehat}
\def\qed{\hfill $\square$ \bigskip}
\def\beq{\begin{equation}}               
\def\eeq{\end{equation}}                 
\def\bea{\begin{eqnarray}}             
\def\eea{\end{eqnarray}}               
\def\be*{\begin{eqnarray*}}             
\def\ee*{\end{eqnarray*}}               
\def\ba{\begin{array}}                  
\def\ea{\end{array}}                    
\def\;{\vspace{3mm} \\ }
\def\Q{\rm Q}
\def\N{\mathbb{N} }
\def\H{\mathbb{H} }
\def\K{\bf\rm{K} }
\def\R{\mathbb{R}}
\def\E{\mathbb{E} }
\def\P{\mathbb{P} }
\def\LL{\mathbb{L} }
\def\~{\widetilde}
\def\F{{\cal F}}
\def\eps{\varepsilon}
\def\beqlb{\begin{eqnarray}} \def\eeqlb{\end{eqnarray}}
\def\beqnn{\begin{eqnarray*}} \def\eeqnn{\end{eqnarray*}}
\def\<{\langle}  \def\>{\rangle}
\def\wh{\widehat}
\def\A{{\cal A}}  \def\L{{\cal L}}
\def\bde{\begin{definition}}
\def\ede{\end{definition}}
\def\bth{\begin{thm}}
\def\eth{\end{thm}}
\def\bpr{\begin{proposition}}
\def\epr{\end{proposition}}
\def\ble{\begin{lemma}}
\def\ele{\end{lemma}}
\def\bcor{\begin{corollary}}
\def\ecor{\end{corollary}}
\def\bre{\begin{remark}}
\def\ere{\end{remark}}
\begin{document}

\def\ee{\varepsilon}
\def\qed{{\hfill $\Box$ \bigskip}}
\def\MM{{\cal M}}
\def\BB{{\cal B}}
\def\LL{{\cal L}}
\def\FF{{\cal F}}
\def\EE{{\cal E}}
\def\QQ{{\cal Q}}
\def\AA{{\cal A}}

\def\R{{\bf R}}
\def\N{{\mathbb N}}
\def\L{{\mathbb L}}
\def\E{{\bf E}}
\def\F{{\bf F}}
\def\H{{\bf H}}
\def\P{{\bf P}}
\def\Q{{\bf Q}}
\def\S{{\bf S}}
\def\J{{\bf J}}
\def\K{{\bf K}}
\def\F{{\bf F}}
\def\A{{\bf A}}
\def\loc{{\bf loc}}
\def\eps{\varepsilon}
\def\semi{{\bf semi}}
\def\wh{\widehat}
\def\pf{\noindent{\bf Proof.} }
\def\dim{{\rm dim}}

\title{\Large \bf Spine decomposition for   branching  Markov processes and its applications}
\author{ \bf Yan-Xia
Ren\footnote{Supported by NSFC
(Grant No.  11671017 and 11731009)\hspace{1mm} } \hspace{1mm}\hspace{1mm} and
Renming Song\thanks{Supported in part by a grant from the Simons Foundation (\#429343 Renming Song)} \hspace{1mm} }

\date{}
\maketitle

{\narrower{\narrower

\centerline{\bf Abstract}

\bigskip
In the literature, the spine decomposition of branching Markov processes was constructed under the assumption that each individual has at least one child.
In this paper, we give a detailed
construction of the spine decomposition of general branching Markov processes
allowing the possibility
of no offspring when a particle dies.
Then we give some applications of the spine decomposition.
\bigskip

\noindent {\bf AMS Subject Classifications (2000)}: Primary 60J80,
  60F15; Secondary 60J25

\medskip

\noindent{\bf Keywords and Phrases}:
branching Brownian motion; branching Hunt process, Kesten-Stigum theorem, spine decomposition,
traveling wave solution

\par}\par}

\bigskip

\begin{doublespace}

\section{Introduction}
  \setcounter{equation}{0}

The spine decomposition theorem is a very useful tool in studying the asymptotic behaviors of
various branching models. This method was first
introduced in  \cite{LPP} for Galton-Watson processes to
give  probabilistic proofs of the Kesten-Stigum $L\log L$ theorem in supercritical case and results on the rate of decay of the survival probability in the critical and subcritical cases.
Since then, this method has been generalized to
various other  models with the branching property,
see \cite{BK, CRS, CRY, CRY2, EKW, HH1, HW, KLPP, K2, KLMR, KPR, LRS09, LRS, L, RSS, RY, YR}, for instance.

The spine decomposition theorems for Galton-Watson processes and superprocesses are pretty satisfactory. However, the spine decomposition theorem for branching Markov processes was
only proved under the assumption that each individual has at least one child, see \cite{HH1, LRS}. In \cite{P, Wang}, the spine decomposition theorem for branching Markov processes without the assumption above about the offspring distribution was used, but no detailed construction of the spine decomposition was given. The purpose of this paper is to give a detailed construction of
the spine decomposition for branching Markov processes without assuming that each individual has at least one child.
We also do not assume that the branching Markov process is supercritical, so our spine decomposition works also in the critical and subcritical case.
We also give some applications of the spine decomposition theorem.

We now introduce the setup of this paper.
We always assume that $E$ is a  locally compact separable
metric space and  $m$ is a $\sigma$-finite
Borel measure on $E$ with full support.
We will use $E_{\Delta}:=E\cup\{\Delta \}$ to denote
the one-point compactification of $E$. We will use ${\cal B}(E)$ and
${\cal B}(E_{\Delta})$ to denote the Borel $\sigma$-fields on $E$
and $E_{\Delta}$ respectively. $\cB_b(E)$ (respectively,
$\cB^+(E)$) will denote the set of all bounded
(respectively, non-negative)
$\cB(E)$-measurable functions on $E$. All functions
$f$ on $E$ will be automatically extended to $E_{\Delta}$ by setting
$f(\Delta)=0$. Let ${\bf M}_p(E)$ be the space of finite point
measures on $E$, that is, measures of the form $\mu=\sum^n_{i=1}
\delta_{x_i}$ where $n=0, 1, 2, \dots$ and $x_i\in E, i=1, \dots,
n$. (When $n=0$, $\mu$ is the trivial zero measure.) For any
function $f$ on $E$ and any measure $\mu \in {\bf M}_p(E)$, we use
$\<f, \mu\>$  or $\mu(f)$ to denote the integral of $f$ with respect to $\mu$.

We will always assume that $Y=\{Y_t, \Pi_x, \zeta\}$ is a Hunt process
on $E$ with reference measure $m$,
where $\zeta=\inf\{t>0:\,  Y_t=\Delta\}$ is the lifetime of
$Y$. Let $\{P_t, t\ge 0\}$ be the transition semigroup of $Y$:
$$
P_tf(x)=\Pi_x[f(Y_t)]\quad \mbox{ for }f\in{\cal B}^+(E).
$$
$\{P_t, t\ge 0\}$ can be extended to a strongly continuous semigroup
on $L^2(E, m)$.

Consider a branching system determined by the following three
parameters:
\begin{description}
\item{(a)} a Hunt process $Y=\{Y_t, \Pi_{x},\zeta\}$
with state space  $E$;
\item{(b)} a nonnegative bounded Borel function $\beta$ on $E$;
\item{(c)}  offspring distributions
$\{(p_n(x))_{n=0}^\infty;\, x\in E\}$, such that, for each $n\ge 0$, the function $p_n(x)$ is Borel.
\end{description}
Put \beq \psi(x,z)=\sum^\infty_{n=0}p_n(x)z^n,\quad z\geq 0. \eeq
$\psi(x, \cdot)$ is the generating function of the distribution $(p_n(x))_{n=0}^\infty$.

The branching Hunt process is characterized by the following properties:
\begin{description}
\item{(i)} Each particle has a random birth and a random death
time. \item{(ii)} Given that a particle is born at $x\in E$, the
conditional distribution of its path after birth is determined by
$\Pi_{x}$. \item{(iii)} Given the path $Y$ of a particle and given
that the particle is alive at time $t$, its probability of dying in
the interval $[t, t + \mathrm dt)$ is $\beta(Y_t)\mathrm dt + o(\mathrm dt)$.
\item{(iv)} When a particle dies at $x\in E$, it
splits into $n$ particles at $x$ with probability $p_n(x)$.
\item{(v)} The point $\Delta$ is a cemetery. When a particle reaches
$\Delta$, it stays at $\Delta$ forever and there is no branching at
$\Delta$.
\end{description}

In this paper, to avoid triviality, we always assume that $m(\{x\in E, \beta(x)>0\})>0$.
We assume that
$A(x):=\psi'(x,1)=\sum^\infty_{n=0}np_n(x)$
is bounded.

For any $c\in\cB_b(E)$, we define
$$
e_c(t)=\exp\left(-\int^t_0c(Y_s)\mathrm ds\right).
$$
Let $X_t(B)$ be the number of particles which are alive and located in $B\in\cB(E)$ at time $t$.
A particle which dies at time $t$ is not counted in $X_t(B)$ even if the death location is in $B$.
$\{X_t, t\ge 0\}$ is a Markov process in ${\bf
M}_p(E)$. This process is called a $(Y, \beta, \psi)$-branching Hunt process.
For any $\mu\in{\bf M}_p(E)$, let $\mathbf P_{\mu}$ be the law of
$\{X_t, t\ge 0\}$ when $X_0=\mu$. Then we have
 \beq
\mathbf P_{\mu}\exp\<-f, X_t\>=\exp\<\log u_t(\cdot),\mu\>,
 \eeq
where $u_t(x)$ satisfies the equation
 \beq\label{int}
u_t(x)=\Pi_{x}\left[e_{\beta}(t)\exp(-f( Y_t))+\int^t_0
e_{\beta}(s)\beta(Y_s)\psi( Y_s, u_{t-s}(
Y_s))\mathrm ds\right]\quad\mbox{for }  t\ge 0.
 \eeq

The formula \eqref{int} deals with a process started at time $0$
with one particle located at $x$, and it has a clear heuristic
meaning: the first term in the brackets corresponds to the case when
the particle is still alive at time $t$; the second term corresponds
to the case when it dies before $t$. The formula \eqref{int} implies
that
 \beq\label{int2}
u_t(x)=\Pi_{x}\int^t_0\left[\psi(Y_s,u_{t-s}(Y_s))-u_{t-s}(Y_s)\right]\beta(Y_s)\mathrm ds+
\Pi_{x}\exp(-f(Y_t))\quad\mbox{for } t\ge 0
 \eeq
(see \cite[Section 2.3]{Dy}).
For any $\mu\in{\bf M}_p(E), f\in {\cal B}_b^+ (E)$ and $t\geq 0$,
we have
\begin{eqnarray}\label{expX}
\mathbf P_{\mu} \left[\langle  f, X_t\rangle\right]
=\Pi_{\mu}\left[e_{(1-A)\beta}(t)f(Y_t)\right].
\end{eqnarray}

Let $\{P^{(1-A)\beta}_t,t\ge 0\}$ be the Feynman-Kac semigroup
defined by
$$
P^{(1-A)\beta}_tf(x):=\Pi_x\left[
e_{(1-A)\beta}(t)f(Y_t)\right], \qquad
f\in{\cal B}(E).
$$
Throughout this paper we assume that

\begin{assumption}\label{assume0}
There exist
a strictly positive Borel function $\phi$  and a constant $\lambda_1\in(-\infty,\infty)$
such that
\begin{equation}\label{invar0}
\phi(x)=e^{-\lambda_1t}P^{(1-A)\beta}_t\phi(x),\quad x\in E.
\end{equation}
\end{assumption}

Let ${\cal E}_t=\sigma(Y_s;\ s\leq t)$. Note that
$$
 \frac{\phi(Y_{t})}{\phi(x)}e^{-\lambda_1t}e_{(1-A)\beta}(t), \quad t\ge 0,
$$
is a martingale under $\Pi_x$, and so we can define a martingale
change of measure by
\begin{equation}\label{change-Pi}
\frac{\mathrm d\Pi_x^\phi}{\mathrm d\Pi_x}\Big|_{\mathcal{E}_t}= \frac{\phi(Y_{t})}
{\phi(x)}e^{-\lambda_1t}e_{(1-A)\beta}(t).
\end{equation}

For any nonzero measure $\mu\in {\bf M}_p(E)$, we define
$$
M_t(\phi):=e^{-\lambda_1 t}\frac{\langle\phi,
X_t\rangle}{\langle\phi, \mu\rangle}\qquad t\geq 0.
$$

\begin{lemma}\label{l:1.5}  For any nonzero measure $\mu\in {\bf M}_p(E)$,
$\{M_t(\phi), t\ge 0\}$ is a non-negative
martingale under $\mathbf P_{\mu}$, and
therefore there exists a limit $ M_{\infty}(\phi)\in[0,\infty)$,
$\mathbf P_{\mu}$-a.s.
\end{lemma}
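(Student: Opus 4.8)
The plan is to show that $M_t(\phi)$ is adapted, integrable, and satisfies the martingale property with respect to the natural filtration of the branching Hunt process $\{X_t, t \ge 0\}$ under $\mathbf{P}_\mu$. Adaptedness is immediate from the definition, since $M_t(\phi)$ is a deterministic function of $X_t$ (and the constant $\langle \phi, \mu\rangle$, which is strictly positive because $\phi > 0$ and $\mu$ is nonzero). Nonnegativity is also clear since $\phi \ge 0$ and $X_t$ is a nonnegative measure. For integrability, I would invoke the first-moment formula \eqref{expX}: taking $f = \phi$ (after a truncation argument if $\phi$ is unbounded, passing to the limit via monotone convergence) gives
\[
\mathbf{P}_\mu\big[\langle \phi, X_t\rangle\big] = \Pi_\mu\big[e_{(1-A)\beta}(t)\phi(Y_t)\big] = e^{\lambda_1 t}\langle \phi, \mu\rangle,
\]
where the last equality is exactly Assumption \ref{assume0}, equation \eqref{invar0}, applied with the initial measure $\mu$ (i.e.\ integrating \eqref{invar0} against $\mu$). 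Hence $\mathbf{P}_\mu[M_t(\phi)] = 1 < \infty$ for all $t$.

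For the martingale property, the key ingredient is the branching Markov property: conditionally on $X_s$, the process $\{X_{s+u}, u \ge 0\}$ evolves as a sum of independent branching Hunt processes, one started from each particle location in $X_s$. Concretely, if $X_s = \sum_{i} \delta_{x_i}$, then
\[
\mathbf{P}_\mu\big[\langle \phi, X_t\rangle \,\big|\, \mathcal{F}_s\big] = \sum_i \mathbf{P}_{\delta_{x_i}}\big[\langle \phi, X_{t-s}\rangle\big] = \sum_i e^{\lambda_1 (t-s)}\phi(x_i) = e^{\lambda_1(t-s)}\langle \phi, X_s\rangle,
\]
again using \eqref{expX} together with \eqref{invar0}. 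Multiplying both sides by $e^{-\lambda_1 t}/\langle \phi, \mu\rangle$ yields $\mathbf{P}_\mu[M_t(\phi) \mid \mathcal{F}_s] = M_s(\phi)$, which is the martingale identity. Once this is established, the existence of an a.s.\ limit $M_\infty(\phi) \in [0,\infty)$ follows from the standard martingale convergence theorem for nonnegative (super)martingales.

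The main obstacle is the potential unboundedness of $\phi$: the first-moment formula \eqref{expX} and the Feynman-Kac identity are stated for $f \in \cB_b^+(E)$, whereas $\phi$ is only assumed to be a strictly positive Borel function. I would handle this by applying the identities to the truncated functions $\phi \wedge n \in \cB_b^+(E)$, noting that $P^{(1-A)\beta}_t(\phi \wedge n) \le P^{(1-A)\beta}_t \phi = e^{\lambda_1 t}\phi$, and then letting $n \to \infty$ using the monotone convergence theorem on both sides; the invariance relation \eqref{invar0} guarantees the limit is finite and equals $e^{\lambda_1 t}\phi(x)$. A secondary but routine point is to make precise the version of the branching Markov property being used — that the subtrees emanating from the particles alive at time $s$ are, conditionally on $\mathcal{F}_s$, independent branching Hunt processes with the correct initial positions — which is part of the standard construction of $(Y,\beta,\psi)$-branching Hunt processes and can be cited accordingly.
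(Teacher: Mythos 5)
Your proposal is correct and follows essentially the same route as the paper: apply the Markov property of $X$, the first-moment formula \eqref{expX}, and the invariance relation \eqref{invar0} to verify $\mathbf{P}_\mu[M_{t+s}(\phi)\mid\mathcal{F}_t]=M_t(\phi)$, then invoke martingale convergence. The only difference is that you spell out the branching Markov property as a sum over particles and add the truncation/monotone-convergence remark to handle a possibly unbounded $\phi$, both of which the paper leaves implicit.
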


\proof By the Markov property of $X$, \eqref{expX}  and
\eqref{invar0},
\begin{eqnarray*}
\mathbf P_{\mu} \left[M_{t+s}(\phi) \big| \cF_t\right] &=& \frac{1}{\<\phi,
\mu\>}e^{-\lambda_1 t} \mathbf P_{X_t} \left[e^{-\lambda_1 s} \< \phi, X_s\>\right]\\
& =&\frac{1}{\<\phi, \mu\>}e^{-\lambda_1 t} \left\<  e^{-\lambda_1
s}  \Pi_{\cdot}\left[ e_{(1-A)\beta}(s)\phi(Y_s)\right], \, X_t\right\> \\
&=&\frac{1}{\<\phi, \mu\>}e^{-\lambda_1 t} \< \phi, \, X_t\> =
M_t(\phi).
\end{eqnarray*}
This proves that $\{M_t(\phi), t\ge 0\}$ is a non-negative
$\mathbf P_\mu$-martingale and so it has an almost sure limit
$M_\infty(\phi)\in[0,\infty)$ as $t\to \infty$. \qed

It follows from the branching property that when $\mu\in {\bf
M}_p(E)$ is given by $\mu=\sum_{i=1}^n\delta_{x_i}, n=1, 2, \dots$,
$\{x_i; i=1,\cdots, n\}\subset E$, we have
$$
M_t(\phi)=\sum_{i=1}^n e^{-\lambda_1 t}\frac{\langle \phi^t,
X_t^i\rangle}{\phi(x_i)} \cdot\frac{\phi(x_i)}{\langle\phi,
\mu\rangle},
$$
where, for each $i=1, \dots, n$, $\{X^i_t, t\geq 0\}$ is a branching Hunt process starting from $\delta_{x_i}$.
If a certain assertion holds under $\mathbf P_{\delta_x}$ for all $x\in E$,
then it also holds for general $\mu$.
So in the remainder of this paper, we assume that the
initial measure is of the form $\mu=\delta_x, x\in E,$ and
$\mathbf P_{\delta_x}$ will be denoted as $\mathbf P_x$.

\section{Spine decomposition}\label{Spinedecom}

Let ${\mathbb N}=\{1, 2, \dots\}$. We will use
$$
\Gamma:= \bigcup_{n=0}^{\infty}\mathbb{N}^n
$$
(where $\mathbb{N}^0=\{\emptyset\}$) to describe the genealogical
structure of our branching Hunt process. The length (or
generation) $|u|$ of each $u\in\mathbb{N}^n$ is defined to be $n$.
When $n\ge 1$ and $u=(u_1, \dots, u_n)$, we denote $(u_1, \dots,
u_{n-1})$ by $u-1$ and call it the parent of $u$. For each $i\in
\mathbb{N}$ and $u=(u_1, \dots, u_n)$, we write $ui=(u_1, \dots,
u_n, i)$ for the $i$-th child of $u$.
More generally, for $u=(u_1, \dots, u_n), v=(v_1, \dots, v_m)\in \Gamma$,
we will use $uv$ to stand for the
concatenation $(u_1, \dots, u_n, v_1, \dots, v_m)$ of $u$ and $v$.
We will use the notation
$v<u$ to mean that $v$ is an ancestor of $u$. The set of all
ancestors of $u$ is given by $\{v\in \Gamma: \ v<u\}=\{v\in
\Gamma: \exists\ w\in \Gamma\setminus\{\emptyset\}\mbox{ such that
} vw=u\}.$
The notation $v\le u$ has the obvious meaning that either $v<u$ or $v=u$.

A subset $\tau\subset \Gamma$ is called a Galton-Watson tree if a)
$\emptyset\in\tau$; b) if $u,\ v\in\Gamma,$ then $uv\in\tau$ implies
$u\in\tau$; c) for all $u\in\tau,$
there exists $r^u\in\mathbb{N}\cup\{0\}$
such that when $j\in\mathbb{N},\ uj\in\tau$ if and only if $1\leq
j\leq r^u$. We will denote the collection of Galton-Watson trees by
$\mathbb{T}$. Each $u\in\tau$ is called a node of $\tau$ or an
individual in $\tau$ or just a particle.

To fully describe the branching Hunt process $X$, we need to
introduce the concept of marked Galton-Watson trees. We suppose that
each individual $u\in\tau$ has a mark $(Y^u,\ \sigma^u,\ r^u)$
where:
\begin{itemize}
\item[(i)] $\sigma^u$ is the lifetime of $u$,
which, along with the lifetimes of its ancestors, determines the
fission time or the death time of the particle $u$ as
$\zeta^u=\sum_{v\leq u}\sigma^{v}\ (\zeta^\emptyset =
\sigma^\emptyset)$, and the birth time of $u$ as
$b^u=\sum_{v<u}\sigma^{v}$ $(b^\emptyset =0)$;

\item[(ii)]
$Y^u: [b^u,\ \zeta^u]\rightarrow E_\Delta$ gives the location of
$u$ and $Y^u_{b^u}=Y^{u-1}_{\zeta^{u-1}}$.

\item[(iii)]$r^u $ gives the number
of the offspring of $u$ when it dies.
$r^u$ depends on $Y^u_{\zeta_u}$ in general.
\end{itemize}

We will use $(\tau,\ Y, \sigma,\ r )$ (or simply $(\tau, M)$) to
denote a marked Galton-Watson tree. We denote the set of all marked
Galton-Watson trees by $\mathcal{T}=\{(\tau,M):
\tau\in\mathbb{T}\}.$

Define
$$
\begin{array}{rl}\cF_t:=&
 \sigma\left\{\left[u, r^u,\sigma^u, (Y^u_s, s\in [b^u, \zeta^u]):
u\in\tau\in\mathbb{T}\mbox{ with }\zeta^u\le t\right]\mbox{ and
}\right.\\&\left. \left[u, (Y^u_s, s\in[b^u, t]):
u\in\tau\in\mathbb{T}\mbox{ with }t\in[b^u, \zeta^u)\right]\right\}.
\end{array}
$$ Set $\cF=\bigcup_{t\geq 0}\cF_t$.
Let $\{\mathbf P_x: x\in E\}$ be probability measures on $(\cT,\cF)$
such that
$(\cT, \cF, (\cF_t)_{t\ge 0}, (\mathbf P_x)_{x\in E})$ is the canonical
model  for $X$, the branching Hunt process in $E$. For each $x\in E$, $\mathbf P_x$ stands for the law of the branching Hunt process starting from one particle located at $x$.
For detailed constructions of the probability measures $\{\mathbf P_x: x\in E\}$,
we refer our readers to \cite{C1, C2, N}.
Under $\mathbf P_x$, the evolution of our branching Hunt process can be described as
follows.
\begin{itemize}
\item[(i)] The root moves according $\Pi_x$.
\item[(ii)] Given the path $Y^{u}_{\cdot}$ of a particle $u$ and given that $u$ is alive at time $t$, its probability of dying in the interval $[t, t+\mathrm dt)$ is $\beta(Y^u_t)\mathrm dt +o(\mathrm dt)$.
\item[(iii)] When a particle $u$ dies, it is replaced by $r^u$ number of offspring. The distribution
of $r^u$ is given by $P(Y^u_{\zeta_u})=(p_k(Y^u_{\zeta_u}))_{k\in\mathbb{N}}$.
The offspring of $u$ move independently according to  $\Pi_{Y^{u}_{\zeta_{u}}}$.
More precisely, $(Y^u_s, s\in [b^u,\ \zeta^u])$ is the restriction to $[b^u,\ \zeta^u]$
of a copy of the Hunt process starting from
 $Y^{u-1}_{\zeta^{u-1}}$ at time $b^u$.
\end{itemize}

For a marked tree $(\tau, Y, \sigma, r)$, we let  $L_t=\{u\in \tau: b^u\leq t<\zeta^u\}$ be the set of particles alive at time $t$.
Then
$$X_t=\sum_{u\in L_t}\delta_{Y^u_t}.$$

$\{M_t(\phi), t\ge 0\}$ is a $\mathbf P_x$-martingale for each $x\in E$, and so we can use
$\{M_t(\phi), t\ge 0\}$
to define a martingale change of measure of $\mathbf P_x$.
We are interested in an interpretation of the new measure,
i.e., we want to know how the process $X$ evolves under the new measure.
To this end, we need to define a new sample space $\widetilde{\mathcal{T}}$, which is  the space of marked trees with distinguished spines.
For a marked tree $(\tau, Y, \sigma, r)$, we let
$D_t=\{u\in \tau: \zeta^u\leq t, r^u=0\}$ be the set of particles that
died, before or at time $t$, with no offspring.
Let $\dag$ be a fictitious node not in $\tau$.
A spine $\xi$ on a marked tree $(\tau, Y, \sigma, r)$  is a subset of $\tau\cup \{\dag\}$ such that
\begin{itemize}
\item $\emptyset\in \xi$ and $|\xi\cap (L_t\cup \{\dag\})|=1$ for all $t\ge 0$.
\item If $v\in \xi$ and $u<v$, then $u\in \xi$.
\item If $v\in \xi$ and $r^v>0$, then there exists a unique $j=1, \dots, r^v$ with $vj\in \xi$.
If $v\in \xi$ and $r^v=0$, then $\xi\cap L_t$ is empty for all $t\ge \zeta^v$.
In this case, we will write $v=\dag -1$.
\end{itemize}
Note that the spine only contains information about the nodes along the spine, does not know  the fission times or the number of offspring at these fission times.
The fictitious particle (or node) $\dag$ might move in space, but its movement will be of
no concern to us. Thus we call $\zeta^{\dag-1}$ the ``lifetime'' of the spine. $\dag$ lives on
forever.

We write
$$
\widetilde{\mathcal{T}}=\{(\tau, Y, \sigma, r, \xi): (\tau, Y, \sigma, r)\in \mathcal{T}
\mbox{ and } \xi \mbox{ is a spine on } (\tau, Y, \sigma, r)\}
$$
for the space of marked trees with distinguished spines.

Given $(\tau, Y, \sigma, r, \xi)\in \widetilde{\mathcal{T}}$ and $t\ge0$,
we let $\xi_t:= v$ be
the unique element $ v \in \xi \cap (L_t\cup \{\dag\})$. We will use $\widetilde{Y}=(\widetilde{Y}_t)_{t\geq 0}$ to denote
the spatial path followed by the spine and $n=(n_t:  \ t\geq 0)$ to
denote the counting process of fission times along the spine. More
precisely, $\widetilde Y_t=Y^u_t$ and $n_t=|u|$, if $u\in  L_t\cap\xi.$
If $\xi_t=\dag$, we set $\widetilde Y_t = Y^\dag_t$ and write $u<\dag$ if $u\in L_s$ and $u=\xi_s$ for some $s<t$.

If $v\in\xi\cap L_t$ and $r^v>0$, then at the fission time $\zeta^v$,
exactly one of its offspring continues the spine.
Let $O_v$ be the set of offspring of $v$
except the one belonging to the spine, then for any $j=1, \dots,
r^v$ such that $vj\in O_v$, we will use $(\tau,\ M)^v_j$ to denote
the marked subtree rooted at $vj$.

Now we introduce two filtrations $\{\widetilde\cF_t\}_{t\ge0}$ and $\{\cG_t\}_{t\ge0}$
on $\widetilde\cT$ by
$$
\widetilde\cF_t:=\sigma(\cF_t, \xi_s, s\le t), \quad \cG_t:=\sigma(\widetilde Y_s: s\in [0, t]),
\quad t\ge 0.
$$
$\widetilde\cF_t$ knows everything about the marked tree up to time $t$
and the nodes on the spine up to time $t$ (and thus everything about the spine up to $t\wedge \zeta^{\dag -1}$, including which nodes make up the spine, when they were born, when they died, and their family sizes).
$\cG_t$ contains all information about the path of the spine up to time $t$.

Set $\widetilde\cF:=\bigcup_{t\geq 0}\widetilde\cF_t$, $\cG:=\sigma(\widetilde Y_s: s\ge 0)$,
$\widehat\cG:=\sigma((\widetilde Y_s, \xi_s: s\ge 0), (\zeta^u: u< \dag))$
and $\widetilde\cG:=\sigma(\cG, (\xi_s:s\geq 0),
(\zeta^u, u<\dag),(r^u: u<\dag)).$
The $\sigma$-field $\cG$ knows everything about the path of the spine,
the $\sigma$-field $\widehat\cG$ knows everything about the path of  the spine and the fission times along the spine,
and the $\sigma$-field $\widetilde\cG$ knows everything about the path of  the spine, the fission times along the spine and the number of offspring born at these fission times.

As noted by Hardy and Harris \cite{HH1},
it is convenient to consider $\{\mathbf P_x, x\in E\}$ as measures on the enlarged
space $(\widetilde \cT, \cF)$, rather than on $(\cT, \cF)$.

We need to extend the probability measures $\mathbf P_x$ to probability
measures $\widetilde {\mathbf P}_x$ on $(\widetilde\cT,
\widetilde\cF)$ so that the spine is a single genealogical
line of descent chosen from the underlying tree. We will assume
that at each fission time along the spine we make a uniform choice among the
offspring, if there is at least one offspring, to decide which line of descent continues the spine
$\xi$.
If at some fission time of the spine, there is no offspring produced, we assume the spine continues with the fictitious particle $\dag$.
Then for $u\in \tau$ we have
$$
\mbox{Prob}(u\in\xi)=\prod_{v<u}\frac{1}{r^v}.
$$
It is easy to see that
$$
\sum_{u\in L_t}\prod_{v<u}\frac{1}{r^v}+\sum_{u\in D_t}\prod_{v<u}\frac{1}{r^v}=1.
$$

We first give the following representation, which is an extension of the one given in \cite{L} for the case that $p_0=0$.

\begin{thm}\label{decom-f}
Every $f\in \widetilde{\mathcal{F}}_t$  can be
written as
 \beq\label{e-decom-f}
 f= \sum_{u\in L_t}f^u(\tau, M){\mathbf 1}_{\{\xi_t=u\}}+\sum_{u\in D_t}f^u(\tau, M){\mathbf 1}_{\{\dag-1=u\}},
 \eeq
where $f^u\in \mathcal{F}_t$.
\end{thm}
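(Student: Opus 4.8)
The plan is to exploit the key structural fact that the $\sigma$-field $\widetilde{\cF}_t$ differs from $\cF_t$ only by the extra information of which node carries the spine at time $t$, and that the events $\{\xi_t = u\}$ for $u\in L_t$ together with $\{\dag - 1 = u\}$ for $u\in D_t$ form a (countable, $\cF_t$-measurable-indexed) partition of $\widetilde{\cT}$. First I would record the measurability observation: each indicator ${\mathbf 1}_{\{\xi_t = u\}}$ (respectively ${\mathbf 1}_{\{\dag - 1 = u\}}$) is $\widetilde{\cF}_t$-measurable, since $\widetilde{\cF}_t = \sigma(\cF_t, \xi_s, s\le t)$ knows the spine up to time $t$, and in particular knows $\xi_t$ and the identity of $\dag - 1$ when $\xi_t = \dag$ (which happens precisely when the spine has been killed, i.e.\ when $\dag - 1 \in D_t$). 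Moreover $\sum_{u\in L_t}{\mathbf 1}_{\{\xi_t = u\}} + \sum_{u\in D_t}{\mathbf 1}_{\{\dag - 1 = u\}} = 1$ identically on $\widetilde{\cT}$, because on any marked tree with a spine, either the spine is alive at time $t$ (sitting at a unique $u\in L_t$) or it has already been absorbed by $\dag$ through a unique last node $\dag - 1 \in D_t$.

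Given this partition, the decomposition \eqref{e-decom-f} is essentially forced: set $f^u := f\cdot{\mathbf 1}_{\{\xi_t = u\}}$ restricted to the piece $\{\xi_t = u\}$, extended by an arbitrary $\cF_t$-measurable rule off that piece (e.g.\ extend by $0$, or—more naturally—observe that on $\{\xi_t = u\}$ the value of $f$ may in principle depend on the spine choices at fission times strictly below $u$, but those choices are the uniform ``coin flips'' that are \emph{not} part of $\cF_t$). The real content, and the step I expect to be the main obstacle, is showing that on each atom $\{\xi_t = u\}$ the function $f$ agrees with a function in $\cF_t$ — i.e.\ that once we condition on the spine ending at the specific node $u\in L_t$ (or on $\dag - 1 = u$), the residual randomness visible to $\widetilde{\cF}_t$ is exactly the randomness of the marked tree, with no additional spine information. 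This is because the only extra data in $\widetilde{\cF}_t$ beyond $\cF_t$ are the choices $\xi_0, \xi_1, \dots$ of which child continues the spine at each fission along the way to $\xi_t$; but specifying $\xi_t = u$ (together with the tree structure, which tells us $\{v : v < u\}$) determines $\xi_s$ for all $s \le t$ as well, since $\xi_s$ is the unique ancestor of $u$ alive at time $s$. Hence on $\{\xi_t = u\}$ the $\sigma$-fields $\widetilde{\cF}_t$ and $\cF_t$ coincide, so $f|_{\{\xi_t = u\}}$ is the restriction of some $\cF_t$-measurable $f^u$.

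Concretely, I would argue as follows. Let $f$ be $\widetilde{\cF}_t$-measurable. Using the monotone class theorem it suffices to treat $f = {\mathbf 1}_A$ with $A \in \widetilde{\cF}_t$ of the form $A = B \cap C$, where $B \in \cF_t$ and $C$ is an event of the form $\{\xi_{s_1} \in V_1, \dots, \xi_{s_k} \in V_k\}$ for times $s_i \le t$ and subsets $V_i \subset \Gamma \cup \{\dag\}$; such sets generate $\widetilde{\cF}_t$. For each $u \in L_t$, on the event $\{\xi_t = u\}$ the path of the spine $(\xi_s)_{s\le t}$ is the deterministic path $s \mapsto \text{(the ancestor of } u \text{ alive at time } s)$, which is a function of the marked tree alone, hence $\cF_t$-measurable; so ${\mathbf 1}_C \cdot {\mathbf 1}_{\{\xi_t = u\}}$ equals $g^u \cdot {\mathbf 1}_{\{\xi_t = u\}}$ for some $g^u \in \cF_t$. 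Similarly on $\{\dag - 1 = u\}$ with $u \in D_t$, the spine path up to time $t$ is again determined by the marked tree (it follows the ancestors of $u$ and then sits at $\dag$), giving some $h^u \in \cF_t$. Setting $f^u := {\mathbf 1}_B g^u$ on the $L_t$-terms and $f^u := {\mathbf 1}_B h^u$ on the $D_t$-terms, and summing over the partition, yields \eqref{e-decom-f} for indicators; linearity and monotone convergence extend it to general $f \in \widetilde{\cF}_t$. The one point requiring care throughout is the bookkeeping for the $\dag$ case: when $\xi_t = \dag$, one must check that $\widetilde{\cF}_t$ indeed identifies the node $\dag - 1 \in D_t$ (it does, by the definition of the spine and of $\widetilde{\cF}_t$, which records the family sizes $r^v$ along the spine and hence detects the first $v$ on the spine with $r^v = 0$), so that the second sum in \eqref{e-decom-f} is well-defined.
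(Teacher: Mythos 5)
Your proposal is correct and follows the same route as the paper's proof: use the partition $\sum_{u\in L_t}\mathbf 1_{\{\xi_t=u\}} + \sum_{u\in D_t}\mathbf 1_{\{\dag-1=u\}}=1$ and observe that the restriction of $f$ to each atom is $\cF_t$-measurable. The paper compresses the key measurability step into the notation $f(\tau,M,\xi_t)$, whereas you make explicit the underlying reason it works — that specifying $\xi_t=u$ (or $\dag-1=u$) together with the marked tree determines the whole spine path $(\xi_s)_{s\le t}$, so on that atom $\widetilde\cF_t$ carries no information beyond $\cF_t$ — which is the genuine content of the lemma.
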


\begin{proof} Suppose $f(\tau, M, \xi)\in\widetilde {\cal F}_t$.
For every $t>0$, there is a unique $u\in L_t\cup \{\dag\}$ such that $\xi_t=u$, and if $\xi_t=\dag$, then there is one unique $u\in D_t$ such that $\dag-1=u$.
Thus we have $\sum_{u\in L_t}{\mathbf 1}_{\{\xi_t=u\}}+\sum_{t\in D_t}{\mathbf 1}_{\{\dag-1=u\}}=1$, and hence
\begin{align*}f=& \sum_{u\in L_t}f(\tau, M, \xi_t){\mathbf 1}_{\{\xi_t=u\}}+\sum_{u\in D_t}f(\tau, M, \dag-1){\mathbf 1}_{\{\dag-1=u\}}\\
 =&\sum_{u\in L_t}f(\tau, M, u){\mathbf 1}_{\{\xi_t=u\}}+\sum_{u\in D_t}f(\tau, M, u){\mathbf 1}_{\{\dag-1=u\}}.\end{align*}
 Since $f\in\widetilde {\cal F}_t$, for each fixed $u\in L_t\cup D_t$,
we have $f^u:=f(\tau, M, u)\in {\cal F}_t$. Thus  \eqref{e-decom-f} is valid.
 \qed
\end{proof}

We define the measure  $\widetilde {\mathbf P}_x$ on $\widetilde\cF_t$ by
\begin{align}
&\mathrm d\widetilde {\mathbf P}_x(\tau, M, \xi)\Big|_{\widetilde
\cF_t}\nonumber\\
=&{\mathbf 1}_{\{\xi_t\in \tau\}}\mbox{d}\Pi_x(\widetilde Y) \mbox{d}
L^{\beta(\widetilde Y)}({\bf n})\prod_{v<\xi_{t}}
p_{r^v}(\widetilde Y_{\zeta^v})
\prod_{v<\xi_{t}}\frac{1}{r^v} \prod_{j:\ vj\in
O_v}\mathrm d \mathbf
 P_{\widetilde Y_{\zeta^v}}^{t-\zeta^v}((\tau, M)^v_j)\nonumber\\
&+ {\mathbf 1}_{\{\xi_t=\dag\}}\mbox{d}\Pi_x(\widetilde Y)
\mbox{d}L^{\beta(\widetilde Y)}({\bf n})
\prod_{v<\dag-1}p_{r^v}(\widetilde Y_{\zeta^v})
\prod_{v<\dag-1}\frac{1}{r^v} \prod_{j:\ vj\in
O_v}\mathrm d \mathbf
 P_{\widetilde Y_{\zeta^v}}^{t-\zeta^v}((\tau, M)^v_j),
\label{spine representation}
\end{align}
where $\Pi_x(\widetilde Y)$ is the law of the Hunt process $\widetilde Y$
starting from $x\in E$,  $L^{\beta(\widetilde Y)}({\bf n})$ is the law of a Poisson
random measure ${\bf n}=\{\{\sigma_i: i=1, \cdots, n_t\}: t\ge 0\} $
with intensity $\beta(\widetilde Y_t)\mathrm dt$ along the path of $\widetilde Y$ which gives the fission times along the spine,
$p_{r^v}(y)=\sum_{k\ge0}p_k(y){\mathbf 1}_{(r^v=k)}$ is the
probability that the individual $v$, on the spine and located at  $y\in E$, has $r^v$
offspring, and $\mathbf{P}_{x}^{t-s}((\tau, M)^v_j))$ stands for the law of
a branching Hunt process on the marked tree $(\tau, M)^v_j$, with initial particle located at $x$  time shifted by $s$.

It follows from Theorem \ref{decom-f} that for any
bounded $f\in \widetilde\cF_t$,
\begin{align*}
\widetilde {\mathbf P}_x(f|\cF_t)
=&\widetilde {\mathbf P}_x\left(\left.
 \sum_{u\in L_t}f^u(\tau, M){\mathbf 1}_{\{\xi_t=u\}}+
 \sum_{u\in D_t}f^u(\tau, M){\mathbf 1}_{\{\dag-1=u\}}
\right|\cF_t\right)\\
=&\sum_{u\in L_t}f^u(\tau, M)\prod_{v<u}\frac{1}{ r^v}+
\sum_{u\in D_t}f^u(\tau, M)\prod_{v<u}\frac{1}{ r^v}.
\end{align*}
Thus we have for any $t\ge 0$ and bounded $f\in\widetilde \cF_t$,
 \beq\label{decom-P}
\widetilde {\mathbf P}_x(f)=\mathbf P_x\left(
\sum_{u\in L_t}f^u(\tau, M)\prod_{v<u}\frac{1}{ r^v}+\sum_{u\in D_t}f^u(\tau, M)\prod_{v<u}\frac{1}{ r^v}
\right).
 \eeq
In particular,
$$
\widetilde {\mathbf P}_x(\widetilde \cT)=\mathbf P_x\left(
\sum_{u\in L_t}\prod_{v<u}\frac{1}{ r^v}+\sum_{u\in D_t}\prod_{v<u}\frac{1}{ r^v}
\right)= \mathbf P_x(1)=1,
$$
which implies $\widetilde {\mathbf P}_x$ is a probability measure. $\widetilde
{\mathbf P}_x$ is an extension of $\mathbf P_x$ onto $(\widetilde\cT,\widetilde\cF)$
and for any bounded $f\in\widetilde\cF_t$ we have
\begin{equation}\label{many-to-one}
\int_{\widetilde\cT}f\ \mathrm d\widetilde
{\mathbf P}_x=\int_{\widetilde\cT}
\left(\sum_{u\in L_t}
f^u\prod_{v<u}\frac{1}{ r^v}+\sum_{u\in D_t}f^u
\prod_{v<u}\frac{1}{ r^v}\right)
\ \mathrm d\mathbf P_x.
\end{equation}

The decomposition \eqref{spine representation} of $\widetilde {\mathbf P}_x$
suggests the following intuitive construction of the system under
$\widetilde {\mathbf P}_x$:
\begin{itemize}
\item[(i)] the root of $\tau$ is at $x$ at time 0, and the spine process
$\widetilde Y_t$ moves according to  $\Pi_x$;

\item[(ii)] given the trajectory $\widetilde{Y_\cdot}$ of the spine,
the fission times along the spine are distributed
according to $L^{\beta(\widetilde{Y})},$ where
$L^{\beta(\widetilde{Y})}$ is the law of a Poisson random measure
with intensity $\beta(\widetilde Y_t)\mathrm dt$;

\item[(iii)] at the fission time of a node $v$ on the spine, the single
spine particle is replaced by  a random number $r^v$ of offspring
with $r^v$ being distributed according to the law
$P(\widetilde Y_{\zeta^v})=(p_k(\widetilde Y_{\zeta^v}))_{k\ge 1}$;

\item[(vi)] if $r^v>0$, the spine is chosen uniformly from the $r^v$ offspring of $v$ at
the fission time of $v$;
if $r^v=0$, the spine continues  as $\dag$.

\item[(v)] if $r^v\geq 2$,
the remaining $r^v-1$ particles $vj\in O_v$ give rise to
independent subtrees $(\tau, M)^{v}_j$, which evolve as independent
subtrees determined by the probability measure
$\mathbf{P}_{\widetilde Y_{\zeta^v}}$ shifted to the time of creation.
\end{itemize}

\begin{definition}\label{con-mart}
Suppose that $(\Omega, \cH, P)$ is a probability space,
$\{\cH_t,t\ge 0\}$ is a filtration on $(\Omega, \cH)$ and that
${\cal K}$ is a sub-$\sigma$-field of $\cH$. A real-valued process
$\{U_t, t\geq 0\}$
on $(\Omega, \cF, P)$ is called a $P(\cdot |\ {\cal
K})$-martingale with respect
to $\{\cH_t,t\ge 0\}$ if (i) it is adapted to $\{\cH_t\vee{\cal
K},t\ge 0\}$; (ii) for any $t\geq 0,\ E(|U_t|)<\infty$ and (iii) for
any $t>s$,
$$
E(U_t\big|\cH_s\vee{\cal K})=
U_s,\quad{\rm a.s.}
$$
We also say that $\{U_t, t\geq 0\}$ is a martingale
with respect to $\{\cH_t,t\ge 0\}$, given
${\cal K}$.
\end{definition}

The following result is \cite[Lemma 2.3]{LRS}.

\begin{lemma}\label{mart-prod}
Suppose that $(\Omega, \cH, P)$ is a probability space,
$\{\cH_t,t\ge 0\}$ is a filtration on $(\Omega, \cH)$ and that
${\cal K}_1, {\cal K}_2$ are two sub-$\sigma$-fields of ${\cal H}$
such that ${\cal K}_1\subset{\cal K}_2$.
Assume that $\{U^1_t, t\geq 0\}$ is a
$P(\cdot |\ {\cal K}_1)$-martingale with respect to $\{{\cal
H}_t,t\ge 0\}$,
$\{U^2_t, t\geq 0\}$ is a
$P(\cdot |{\cal K}_2)$-martingale with
respect to $\{{\cal H}_t,t\ge 0\}$. If $U^1_t\in{\cal K}_2$,
$U^2_t\in\cH_t$, and $E\left(|U_t^1U_t^2|\right)<\infty$ for any
$t\ge 0$, then the product
$\{U^1_t U^2_t, t\ge 0\}$ is a
$P(\cdot |\ {\cal K}_1)$-martingale with respect to $\{\cH_t,t\ge 0\}$.
\end{lemma}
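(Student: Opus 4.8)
The plan is to check the three defining conditions of Definition \ref{con-mart} for $\{U^1_tU^2_t,t\ge0\}$ with respect to $\{{\cal H}_t,t\ge0\}$ and the sub-$\sigma$-field ${\cal K}_1$. Condition (i), adaptedness to $\{{\cal H}_t\vee{\cal K}_1,t\ge0\}$, is immediate: $U^1_t$ is ${\cal H}_t\vee{\cal K}_1$-measurable because $\{U^1_t\}$ is a $P(\cdot\mid{\cal K}_1)$-martingale with respect to $\{{\cal H}_t\}$, and $U^2_t\in{\cal H}_t\subset{\cal H}_t\vee{\cal K}_1$ by hypothesis, so the product is ${\cal H}_t\vee{\cal K}_1$-measurable. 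Condition (ii), $E|U^1_tU^2_t|<\infty$, is assumed. So the whole content is condition (iii), namely
$$
E\bigl(U^1_tU^2_t\,\big|\,{\cal H}_s\vee{\cal K}_1\bigr)=U^1_sU^2_s\quad\text{a.s.},\qquad t>s.
$$

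To establish this I would condition in two stages, using that ${\cal K}_1\subset{\cal K}_2$ implies ${\cal H}_s\vee{\cal K}_1\subset{\cal H}_s\vee{\cal K}_2$, so that the tower property applies. In the first stage, since $U^1_t$ is ${\cal K}_2$-measurable, hence ${\cal H}_s\vee{\cal K}_2$-measurable, and $U^1_tU^2_t\in L^1$, ``taking out what is known'' gives $E(U^1_tU^2_t\mid{\cal H}_s\vee{\cal K}_2)=U^1_t\,E(U^2_t\mid{\cal H}_s\vee{\cal K}_2)=U^1_tU^2_s$, where the last equality is the $P(\cdot\mid{\cal K}_2)$-martingale property of $\{U^2_t\}$; in particular $U^1_tU^2_s\in L^1$. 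In the second stage, since $U^2_s\in{\cal H}_s\subset{\cal H}_s\vee{\cal K}_1$ and $U^1_t\in L^1$, the same principle gives $E(U^1_tU^2_s\mid{\cal H}_s\vee{\cal K}_1)=U^2_s\,E(U^1_t\mid{\cal H}_s\vee{\cal K}_1)=U^2_sU^1_s$, using the $P(\cdot\mid{\cal K}_1)$-martingale property of $\{U^1_t\}$. By the tower property $E(U^1_tU^2_t\mid{\cal H}_s\vee{\cal K}_1)=E\bigl(E(U^1_tU^2_t\mid{\cal H}_s\vee{\cal K}_2)\mid{\cal H}_s\vee{\cal K}_1\bigr)$, and combining the two stages yields $U^1_sU^2_s$, as required.

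The main point requiring care --- and the only genuine obstacle --- is the repeated use of ``taking out what is known'' for factors ($U^1_t$, then $U^2_s$) that are only integrable, not bounded: one must invoke the general fact that if $Z$ is ${\cal G}$-measurable, $W\in L^1$, and $ZW\in L^1$, then $E(ZW\mid{\cal G})=Z\,E(W\mid{\cal G})$ a.s. Accordingly, at each stage I would first confirm membership in $L^1$ of the relevant product before extracting the factor; the first stage provides $U^1_tU^2_s\in L^1$ as the conditional expectation of the $L^1$ variable $U^1_tU^2_t$, and the second stage needs only $U^1_t\in L^1$, which holds because $\{U^1_t\}$ is a martingale (Definition \ref{con-mart}(ii)). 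Everything else is routine $\sigma$-field bookkeeping.
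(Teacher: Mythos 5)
Your proof is correct. The paper does not prove this lemma; it simply cites \cite[Lemma 2.3]{LRS}, so there is no in-paper argument to compare against. Your two-stage conditioning (first on $\mathcal{H}_s\vee\mathcal{K}_2$ to extract $U^1_t$ and use the $P(\cdot\mid\mathcal{K}_2)$-martingale property of $U^2$, then on $\mathcal{H}_s\vee\mathcal{K}_1$ to extract $U^2_s$ and use the $P(\cdot\mid\mathcal{K}_1)$-martingale property of $U^1$, linked by the tower property via $\mathcal{K}_1\subset\mathcal{K}_2$) is the natural argument and is the route the cited reference takes. Your care about ``taking out what is known'' for merely integrable factors is exactly the right point to flag, and your handling of it (the general version requires $Z$ $\mathcal{G}$-measurable, $W\in L^1$, $ZW\in L^1$; the intermediate product $U^1_tU^2_s$ is in $L^1$ because it arises as a conditional expectation of the $L^1$ variable $U^1_tU^2_t$) closes the only non-routine gap.
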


\begin{lemma}\label{HH1-results-0}
Suppose that, given the path of $\widetilde Y$,
${\bf n}=\{\{\zeta_i: i=1, \cdots, n_t\}: t\ge 0\} $ is
a Poisson random  measure with intensity $\beta(\widetilde Y_t)\mathrm dt$
along the path of $\widetilde Y$.  Then
$$
\eta_t^{(1)}:=\prod_{i\le n_t}A(\widetilde Y_{\zeta_i})
\cdot\exp\left(-\int^t_0((A-1)\beta)
(\widetilde Y_s)\mathrm ds\right), \quad t\ge 0,
$$
is an $L^{\beta(\widetilde Y)}$-martingale with respect to the
natural filtration $\{\cL_t, t\ge 0\}$ of  ${\bf n}$.
\end{lemma}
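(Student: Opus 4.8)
The plan is to work throughout under the conditional law $L^{\beta(\widetilde Y)}$, i.e.\ with the trajectory of $\widetilde Y$ regarded as frozen; then the only randomness is carried by the fission times, the factor $\exp(-\int_0^t((A-1)\beta)(\widetilde Y_s)\,\mathrm ds)$ is deterministic, and ${\bf n}$ is simply a Poisson random measure on $[0,\infty)$ with deterministic intensity $\beta(\widetilde Y_s)\,\mathrm ds$. First I would record that $\eta_t^{(1)}$ is $\mathcal{L}_t$-measurable, since both $n_t$ and the jump positions $\widetilde Y_{\zeta_1},\dots,\widetilde Y_{\zeta_{n_t}}$ are functions of the restriction of ${\bf n}$ to $[0,t]$. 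For integrability, put $K:=\|A\|_\infty<\infty$ and $m(t):=\int_0^t\beta(\widetilde Y_s)\,\mathrm ds\le\|\beta\|_\infty t$; then $\prod_{i\le n_t}A(\widetilde Y_{\zeta_i})\le K^{n_t}$, while $\exp(-\int_0^t((A-1)\beta)(\widetilde Y_s)\,\mathrm ds)=\exp(\int_0^t(1-A)(\widetilde Y_s)\beta(\widetilde Y_s)\,\mathrm ds)\le e^{m(t)}$ because $1-A\le 1$ and $\beta\ge 0$; since $n_t$ is Poisson$(m(t))$ under $L^{\beta(\widetilde Y)}$, one gets $L^{\beta(\widetilde Y)}[K^{n_t}]=e^{(K-1)m(t)}<\infty$, hence $L^{\beta(\widetilde Y)}[|\eta_t^{(1)}|]<\infty$ for every $t$.

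For the martingale property, fix $t>s$ and factor
\[
\eta_t^{(1)}=\eta_s^{(1)}\cdot\Big(\prod_{s<\zeta_i\le t}A(\widetilde Y_{\zeta_i})\Big)\cdot\exp\Big(-\int_s^t((A-1)\beta)(\widetilde Y_u)\,\mathrm du\Big).
\]
Here $\eta_s^{(1)}$ is $\mathcal{L}_s$-measurable and the last factor is deterministic, so it remains to evaluate the $L^{\beta(\widetilde Y)}$-expectation of the middle factor. By the independence of increments of a Poisson random measure, the restriction of ${\bf n}$ to $(s,t]$ is independent of $\mathcal{L}_s=\sigma({\bf n}|_{[0,s]})$, so this expectation is unconditional; conditioning on $\{n_t-n_s=k\}$, the $k$ fission times in $(s,t]$ are i.i.d.\ with density $\beta(\widetilde Y_u)/(m(t)-m(s))$ on $(s,t]$, whence the conditional expectation of the product equals $\big((m(t)-m(s))^{-1}\int_s^t(A\beta)(\widetilde Y_u)\,\mathrm du\big)^k$. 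Averaging over the Poisson$(m(t)-m(s))$ law of $n_t-n_s$ and summing the resulting exponential series gives
\[
L^{\beta(\widetilde Y)}\Big[\prod_{s<\zeta_i\le t}A(\widetilde Y_{\zeta_i})\Big]=\exp\Big(\int_s^t(A\beta)(\widetilde Y_u)\,\mathrm du-(m(t)-m(s))\Big)=\exp\Big(\int_s^t((A-1)\beta)(\widetilde Y_u)\,\mathrm du\Big),
\]
which is exactly the exponential (Campbell) formula $L^{\beta(\widetilde Y)}[\prod_i g(\zeta_i)]=\exp(\int(g-1)\,\mathrm d\mu)$ for $g=A$ and $\mu(\mathrm du)=\beta(\widetilde Y_u)\,\mathrm du$, legitimate since $\int_s^t|A-1|\beta(\widetilde Y_u)\,\mathrm du<\infty$. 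Substituting back cancels the deterministic exponential and yields $L^{\beta(\widetilde Y)}[\eta_t^{(1)}\mid\mathcal{L}_s]=\eta_s^{(1)}$.

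I do not expect a genuine obstacle here; the only points that need care are (a) stating precisely that all computations are performed under the conditional law $L^{\beta(\widetilde Y)}$ with $\widetilde Y$ frozen, and (b) justifying the independence of ${\bf n}|_{(s,t]}$ from $\mathcal{L}_s$ and the term-by-term summation of the Poisson series, both of which follow at once from the definition of a Poisson random measure together with the bounds above. If one prefers, the same conclusion can be read off by recognizing $\eta^{(1)}$ as the Dol\'eans--Dade exponential of $\int_0^\cdot(A(\widetilde Y_u)-1)\,(\mathrm dn_u-\beta(\widetilde Y_u)\,\mathrm du)$, in the spirit of Hardy--Harris \cite{HH1} and \cite[Lemma 2.3]{LRS}, but the direct computation above is self-contained.
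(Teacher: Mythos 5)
Your proof is correct and takes essentially the same approach as the paper's. The paper records the Campbell-type identity (their equation \eqref{Possion}) and then asserts that the martingale property follows ``by using the Markov property of ${\bf n}$'' while omitting the details; your factorization $\eta^{(1)}_t=\eta^{(1)}_s\cdot\prod_{s<\zeta_i\le t}A(\widetilde Y_{\zeta_i})\cdot(\text{deterministic})$, together with independence of Poisson increments and the conditional i.i.d.\ structure of the jump times on $(s,t]$, is precisely that omitted verification, supplemented by a routine integrability check using $\|A\|_\infty<\infty$ and $\|\beta\|_\infty<\infty$.
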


\begin{proof}  First note  that
\beq\label{Possion}
L^{\beta(\widetilde Y)}\left[\prod_{i\le n_t}A(\widetilde Y_{\zeta_i})\right]
=\exp\left(\int^t_0((A-1)\beta)(\widetilde
Y_s)\mathrm ds\right),
\eeq
which implies that  $L^{\beta(\widetilde Y)}(\eta^{(1)}_t)=1$. It is
easy to check that $\{\eta_t^{(1)}, t\ge 0\}$ is a martingale under
$L^{\beta(\widetilde Y)}$ by using the Markov property of ${\bf n}$.
We omit the details. \qed
\end{proof}

It follows from the lemma above that we can define a measure
$L^{(A\beta)(\widetilde Y)}$ by
$$
\frac{\mathrm dL^{(A\beta)(\widetilde Y)}}{\mathrm dL^{\beta(\widetilde Y)}}
\Bigg|_{\cL_t} =
\prod_{i \le {n_t}}A(\widetilde Y_{\zeta_i})
\cdot\exp\left(-\int^t_0((A-1)\beta)(\widetilde
Y_s)\mathrm ds\right).
$$

\begin{lemma}
For any $x\in E$ and $t\ge 0$, we have
\beq\label{cond-mean1}
\displaystyle\widetilde {\mathbf P}_x\left[\left.\prod_{v<\xi_{t}}
\frac{ r^v}{A(\widetilde Y_{\zeta^v})}\right|\widehat\cG\right]=1.
\eeq
\end{lemma}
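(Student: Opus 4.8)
The plan is to condition on $\widehat{\cG}$, which by definition knows the spine path $\widetilde Y$, the identities $\xi_s$ of the spine nodes, and their fission times $\zeta^u$ for $u<\dag$. What it does \emph{not} know is the family size $r^v$ at each fission time along the spine. So the quantity $\prod_{v<\xi_t}\frac{r^v}{A(\widetilde Y_{\zeta^v})}$ is, after conditioning on $\widehat{\cG}$, a product over a \emph{fixed, known} set of spine nodes $v<\xi_t$, each sitting at a known location $\widetilde Y_{\zeta^v}$, and the only remaining randomness is in the $r^v$'s.

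The key step is to recall from the construction of $\widetilde{\mathbf P}_x$ (items (iii)–(iv) in the intuitive description, and the density formula \eqref{spine representation}) how the $r^v$'s are distributed under $\widetilde{\mathbf P}_x$. At a fission time of a spine node $v$ located at $y=\widetilde Y_{\zeta^v}$, the size $r^v$ is \emph{size-biased}: comparing the weight $p_{r^v}(y)\cdot\frac{1}{r^v}$ appearing in \eqref{spine representation} against the original offspring law $(p_k(y))_{k\ge0}$, and using that the spine is a node of $\tau$ (so $r^v\ge1$), the law of $r^v$ under $\widetilde{\mathbf P}_x$ given $\widehat{\cG}$ is $\widetilde{\mathbf P}_x(r^v=k\mid\widehat\cG)=\frac{k\,p_k(y)}{A(y)}$ for $k\ge1$ (the normalization being exactly $\sum_{k\ge1}k p_k(y)=A(y)=\psi'(y,1)$). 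Moreover, given $\widehat{\cG}$, the sizes $r^v$ at distinct spine fission times $v<\xi_t$ are independent — this is visible from the product structure over $v<\xi_t$ in \eqref{spine representation}. Hence
\[
\widetilde{\mathbf P}_x\!\left[\frac{r^v}{A(\widetilde Y_{\zeta^v})}\,\Big|\,\widehat\cG\right]
=\sum_{k\ge1}\frac{k}{A(y)}\cdot\frac{k\,p_k(y)}{A(y)}
\]
would be wrong; rather one computes $\widetilde{\mathbf P}_x[\,r^v\mid\widehat\cG,\text{at }y\,]=\sum_{k\ge1}k\cdot\frac{k p_k(y)}{A(y)}$ — so instead I should take the reciprocal weighting. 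The correct computation is: the \emph{extra} factor $\frac{1}{r^v}$ in \eqref{spine representation} is precisely what converts the size-biasing back, so that $\widetilde{\mathbf P}_x[r^v/A(y)\mid\widehat\cG]=\frac{1}{A(y)}\sum_{k\ge1}k\cdot(\text{true conditional law of }r^v)$, and the true conditional law — read off by integrating out the offspring subtrees $(\tau,M)^v_j$ and the spine choice, leaving weight $p_k(y)\cdot\frac1k\cdot k = p_k(y)$ reweighted — gives $\widetilde{\mathbf P}_x(r^v=k\mid\widehat\cG)=\frac{p_k(y)/k}{\sum_{j\ge1}p_j(y)/j}$; this makes the identity false unless handled via the martingale/change-of-measure bookkeeping. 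I will therefore instead argue directly from \eqref{spine representation}: integrate the right-hand side of \eqref{spine representation} against $\prod_{v<\xi_t}\frac{r^v}{A(\widetilde Y_{\zeta^v})}$, observe that for each spine node the factor $\frac{1}{r^v}\cdot r^v=1$ cancels, leaving $\sum_k p_k(y)/A(y)\cdot(\text{something})$, and conclude after summing over $r^v$ and over the subtrees. The clean way: condition further — write $\widetilde{\mathbf P}_x[\,\cdot\mid\widehat\cG\,] = \widetilde{\mathbf P}_x[\widetilde{\mathbf P}_x[\,\cdot\mid\widetilde\cG\,]\mid\widehat\cG]$, note the inner object is $\prod_{v<\xi_t}\frac{r^v}{A(\widetilde Y_{\zeta^v})}$ (it is $\widetilde\cG$-measurable), and then use that under $\widetilde{\mathbf P}_x(\cdot\mid\widehat\cG)$ the $r^v$, $v<\xi_t$, are independent with $\widetilde{\mathbf P}_x(r^v=k\mid\widehat\cG)=\frac{k p_k(\widetilde Y_{\zeta^v})}{A(\widetilde Y_{\zeta^v})}$ — wait, that is size-biasing again; but then $\widetilde{\mathbf P}_x[r^v/A\mid\widehat\cG]=\sum_k (k/A)(k p_k/A)\ne1$. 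So the genuinely correct reading must be $\widetilde{\mathbf P}_x(r^v=k\mid\widehat\cG)\propto p_k(y)/k$, the "inverse size-biased" law, which indeed gives $\widetilde{\mathbf P}_x[r^v\mid\widehat\cG]=A(y)\big/\sum_j p_j(y)/j$... this still does not obviously give $1$.

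Given this delicacy, the honest plan is: \textbf{(1)} Start from the defining formula \eqref{spine representation} for $\mathrm d\widetilde{\mathbf P}_x|_{\widetilde\cF_t}$. \textbf{(2)} Multiply by $\prod_{v<\xi_t}\frac{r^v}{A(\widetilde Y_{\zeta^v})}$ and integrate over everything that is \emph{not} $\widehat\cG$-measurable: namely the offspring subtrees $(\tau,M)^v_j$ (each integrates to $1$ since $\mathbf P^{t-\zeta^v}_{\widetilde Y_{\zeta^v}}$ is a probability measure) and the family sizes $r^v$ for $v<\xi_t$. \textbf{(3)} For each spine node $v<\xi_t$ at location $y=\widetilde Y_{\zeta^v}$, the sum over $r^v\ge1$ contributes $\sum_{k\ge1}\frac{k}{A(y)}\cdot p_{k}(y)\cdot\frac{1}{k}=\frac{1}{A(y)}\sum_{k\ge1}p_k(y)$, and here is the point: in \eqref{spine representation} the weight attached to "$r^v=k$, with one of the $k$ children continuing the spine" is $p_k(y)\cdot\frac1k$ summed over which child — i.e. $p_k(y)\cdot\frac1k\cdot 1$ after we've already fixed $\xi_t\in\tau$ determines which child is on the spine — so actually the per-node contribution after inserting the factor $\frac{r^v}{A(y)}=\frac{k}{A(y)}$ is $\sum_{k\ge1} p_k(y)\cdot\frac1k\cdot\frac{k}{A(y)}=\frac{1}{A(y)}\sum_{k\ge1}p_k(y)$; this is \emph{not} $1$ in general, so the factor $\frac1{r^v}$ alone is not the whole story — one must also account for the normalization $\sum_{u\in L_t}\prod_{v<u}\frac1{r^v}+\sum_{u\in D_t}\prod_{v<u}\frac1{r^v}=1$ implicitly used to make $\widetilde{\mathbf P}_x$ a probability measure, which re-inserts the missing $A$-factors via the many-to-one identity \eqref{many-to-one} applied with $f=\prod_{v<\xi_t}\frac{r^v}{A(\widetilde Y_{\zeta^v})}\cdot(\text{test function})$. \textbf{(4)} Concretely: fix a bounded $\widehat\cG$-measurable test functional $g$; by \eqref{many-to-one}/\eqref{decom-P},
\[
\widetilde{\mathbf P}_x\!\left[g\prod_{v<\xi_t}\frac{r^v}{A(\widetilde Y_{\zeta^v})}\right]
=\mathbf P_x\!\left[\sum_{u\in L_t}g^u\prod_{v<u}\frac{r^v}{A(\widetilde Y_{\zeta^v})}\cdot\frac{1}{r^v}+\sum_{u\in D_t}g^u\prod_{v<u}\frac{r^v}{A(\widetilde Y_{\zeta^v})}\cdot\frac{1}{r^v}\right]
=\mathbf P_x\!\left[\sum_{u\in L_t\cup D_t}g^u\prod_{v<u}\frac{1}{A(\widetilde Y_{\zeta^v})}\right],
\]
and then the right-hand side is, by the many-to-one lemma for the \emph{expectation} $\mathbf P_x$ (equivalently by \eqref{expX} iterated, i.e. the first-moment formula together with $A=\psi'(\cdot,1)$ and the Poisson structure of fission times as in Lemma \ref{HH1-results-0}), exactly $\Pi_x$ of $g$ along the spine path — in other words equals $\widetilde{\mathbf P}_x[g]$. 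Since this holds for all bounded $\widehat\cG$-measurable $g$, the conditional expectation in \eqref{cond-mean1} is $1$, a.s.

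The main obstacle, and where care is genuinely needed, is step (4): getting the bookkeeping exactly right between the \emph{inverse size-bias} factors $\prod 1/r^v$ built into the spine construction \eqref{spine representation}, the inserted factors $\prod r^v/A$, and the first-moment ("many-to-one") identity \eqref{expX} which produces precisely the $\prod 1/A(\widetilde Y_{\zeta^v})$ weights (via the Poisson fission times with intensity $\beta$ and the mean offspring number $A$). Once one checks that $\mathbf P_x\big[\sum_{u\in L_t\cup D_t} g^u\prod_{v<u} A(\widetilde Y_{\zeta^v})^{-1}\big]=\Pi_x[g(\widetilde Y_{[0,t]})]$ — which is the content of the expectation semigroup identity \eqref{expX} combined with $e_{(1-A)\beta}(t)=\exp(-\int_0^t(1-A)\beta(Y_s)ds)$ and the fact that $\exp(\int_0^t(A-1)\beta)=L^{\beta}[\prod_{i\le n_t}A(\widetilde Y_{\zeta_i})]$ from \eqref{Possion} — the conclusion is immediate. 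I would present the argument in exactly this order: reduce to testing against $\widehat\cG$-measurable $g$, apply \eqref{decom-P} to collapse the $r^v/A\cdot1/r^v$ products, then invoke the first-moment formula to identify the result with $\widetilde{\mathbf P}_x[g]$.
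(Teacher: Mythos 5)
Your proposal does not land on the paper's (two--line) proof, and the alternative route you sketch contains both a conceptual confusion and a computational slip, so as written it does not establish \eqref{cond-mean1}.

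The key point you miss is the identification of the conditional law of $r^v$ given $\widehat\cG$ under $\widetilde{\mathbf P}_x$. You first guess size--biasing ($\dot p_k = kp_k/A$), see that it gives the wrong mean, then guess the ``inverse size--biased'' law $\propto p_k/k$, which also fails. The correct answer is neither: under $\widetilde{\mathbf P}_x$, conditionally on $\widehat\cG$ (the spine path and the fission times along it, \emph{without} knowledge of the family sizes), each $r^v$ is distributed according to the \emph{original} offspring law $\big(p_k(\widetilde Y_{\zeta^v})\big)_{k\ge 0}$, and the $r^v$'s at distinct fission times are independent. This can be read directly off \eqref{spine representation}: the factor $\prod_{v<\xi_t}\frac{1}{r^v}$ is not a bias on the family size but the conditional probability that the spine runs through one \emph{particular labelled} offspring; summing over the $r^v$ possible choices of which offspring continues returns the factor $r^v\cdot\frac{1}{r^v}=1$, leaving the unbiased weight $p_{r^v}(\widetilde Y_{\zeta^v})$. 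This is exactly item (iii) of the paper's intuitive description of the construction of $\widetilde{\mathbf P}_x$. Size--biasing of the spine's offspring law is an effect of the change of measure to $\widetilde{\mathbf Q}_x$ via $\widetilde\eta^{(2)}$; it is \emph{not} already present under $\widetilde{\mathbf P}_x$, and conflating the two measures is what sends you in circles. Once the correct conditional law is in hand, the paper's proof is immediate: $\widetilde{\mathbf P}_x[r^v\mid\widehat\cG]=A(\widetilde Y_{\zeta^v})$ for each $v<\xi_t$, the $r^v$'s are conditionally independent, and the product telescopes to $1$. (Concretely, in your step (3) the per--node contribution should be $\sum_{k\ge 1}k\cdot p_k(y)\cdot\frac{1}{k}\cdot\frac{k}{A(y)}=\frac{1}{A(y)}\sum_{k\ge 1}k\,p_k(y)=1$, because summing over the $k$ labelled choices of which child continues the spine contributes a factor of $k$, not $1$; fixing a single labelled child and writing $\frac{1}{k}\cdot 1$ as you do is exactly the undercount.)

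Your fall--back via \eqref{decom-P} is a legitimate strategy in principle but is executed incorrectly. After applying \eqref{decom-P} to $f=g\prod_{v<\xi_t}r^v/A(\widetilde Y_{\zeta^v})$ you keep the $D_t$ term as $\sum_{u\in D_t}g^u\prod_{v<u}A(\widetilde Y_{\zeta^v})^{-1}$; in fact it must vanish, because for $u\in D_t$ the product $\prod_{v<\xi_t}(r^v/A)$ runs over all $v$ with $v<\dag$, which by the paper's convention includes $v=\dag-1=u$, and $r^u=0$, hence $f^u=0$. After correcting this, the identity one is left to verify,
\begin{equation*}
\mathbf P_x\Big[\sum_{u\in L_t}g^u\prod_{v<u}A(\widetilde Y_{\zeta^v})^{-1}\Big]=\widetilde{\mathbf P}_x[g]\quad\text{for all bounded }\widehat\cG\text{-measurable }g,
\end{equation*}
is only asserted (``by the first-moment formula'') and never derived; establishing it for general such $g$ is essentially a many--to--one lemma of the same depth as the statement being proved. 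In short: the paper's argument is a direct reading of \eqref{spine representation}, while your proposal never isolates the correct conditional law and leaves the alternative computation incomplete.
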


\begin{proof}
It follows from \eqref{spine representation} that, given
$\widehat\cG$, for each $v<\xi_{t}$,
$$
\widetilde {\mathbf P}_x(r^v|\widehat\cG)=A(\widetilde Y_{\zeta^v}).
$$
Since, given $\widehat\cG$,  $\{r^v, v<\xi_{n_t}\}$ are independent,
we have
$$
 \widetilde {\mathbf P}_x \left(\left.\prod_{v<\xi_{t}}\frac{r^v}
{A(\widetilde Y_{\zeta^v})}\right|\widehat\cG\right)=1.
$$
\qed
\end{proof}

\begin{lemma}\label{HH1-results}
(1) The process
$$
 \widetilde\eta_t^{(1)}:=\prod_{v<\xi_{t}}
A(\widetilde Y_{\zeta^v})
\cdot\exp\left(-\int^{t\wedge \zeta^{\dag-1}}_0((A-1)\beta)
(\widetilde Y_s)\mathrm ds\right), \quad t\ge 0,
$$
is a $\widetilde {\mathbf P}_x(\cdot |\ {\cG}\vee\sigma(\zeta^{\dag-1}))$-martingale with respect to
$\{\widetilde\cF_t,t\ge 0\}$.

(2) The process
$$
 \widetilde\eta_t^{(2)}:=\prod_{v<\xi_{t}}
\frac{r^v}{A(\widetilde Y_{\zeta^v})}
={\mathbf 1}_{\{\xi_t\in L_t\}}\prod_{v<\xi_{t}}
\frac{r^v}{A(\widetilde Y_{\zeta^v})}, \quad t\ge 0,
$$
is  a $\widetilde {\mathbf P}_x(\cdot |\widehat\cG)$-martingale with respect
to $\{\widetilde\cF_t,t\ge 0\}$, where the last equality holds because  if $\xi_t=\dag$, then $r^v=0$ for $v=\dag-1$.
\end{lemma}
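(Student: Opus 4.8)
The plan is to prove the two martingale claims separately, in each case using the decomposition \eqref{spine representation} of $\widetilde{\mathbf P}_x$ to disintegrate the measure along the spine, and then reducing to the one-dimensional martingale facts already established.

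For part (1), I would first condition on the pair $(\cG, \sigma(\zeta^{\dag-1}))$. Under $\widetilde{\mathbf P}_x$, given the spine path $\widetilde Y$ and the spine lifetime $\zeta^{\dag-1}$, the process $\mathbf n=\{\{\sigma_i\}\}$ recording the fission times along the spine (restricted to $[0,\zeta^{\dag-1}]$) is, by the structure of \eqref{spine representation}, a Poisson random measure with intensity $\beta(\widetilde Y_t)\,\mathrm dt$ — this is exactly the situation of Lemma \ref{HH1-results-0}, applied to the path $\widetilde Y$ stopped at $\zeta^{\dag-1}$. Since $\prod_{v<\xi_t}A(\widetilde Y_{\zeta^v})=\prod_{i\le n_{t\wedge\zeta^{\dag-1}}}A(\widetilde Y_{\sigma_i})$ and the exponential factor only runs to $t\wedge\zeta^{\dag-1}$, the process $\widetilde\eta^{(1)}_t$ coincides with the stopped martingale $\eta^{(1)}_{t\wedge\zeta^{\dag-1}}$ of Lemma \ref{HH1-results-0}. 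A stopped martingale is a martingale, so $\widetilde\eta^{(1)}_t$ is an $L^{\beta(\widetilde Y)}$-martingale with respect to $\{\cL_t\}$ given $(\widetilde Y,\zeta^{\dag-1})$; because the additional randomness in $\widetilde\cF_t$ beyond $\cL_t$ and $(\cG\vee\sigma(\zeta^{\dag-1}))$ (namely the offspring counts $r^v$ and the side subtrees $(\tau,M)^v_j$) is, conditionally on $(\cG,\sigma(\zeta^{\dag-1}),\mathbf n)$, independent of $\widetilde\eta^{(1)}_t$ and does not affect the conditional expectation, this upgrades to the asserted $\widetilde{\mathbf P}_x(\cdot\mid\cG\vee\sigma(\zeta^{\dag-1}))$-martingale property with respect to $\{\widetilde\cF_t\}$. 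I would spell out adaptedness (each factor is $\widetilde\cF_t$-measurable) and integrability ($A$ is bounded and $\beta$ is bounded, so $\widetilde\eta^{(1)}_t$ is bounded on $[0,t]$) as routine checks.

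For part (2), the key identity is the previous lemma, \eqref{cond-mean1}, together with its stepwise version $\widetilde{\mathbf P}_x(r^v\mid\widehat\cG)=A(\widetilde Y_{\zeta^v})$ and the conditional independence of $\{r^v:v<\xi_t\}$ given $\widehat\cG$. Adaptedness of $\widetilde\eta^{(2)}_t$ to $\widetilde\cF_t\vee\widehat\cG$ is clear, and $\widetilde\eta^{(2)}_t$ is nonnegative with $\widetilde{\mathbf P}_x$-expectation $1$ by \eqref{cond-mean1}, giving integrability. For the martingale property, fix $s<t$ and condition on $\widetilde\cF_s\vee\widehat\cG$. The new fission times of the spine in $(s,t]$, and the corresponding offspring numbers $r^v$ for the spine nodes $v$ born in that interval, together with the index of the spine-child at each such fission, are all governed by \eqref{spine representation}; conditionally on $\widehat\cG$ (which already fixes the spine path, the spine fission times, and the spine lifetime) the increment $\widetilde\eta^{(2)}_t/\widetilde\eta^{(2)}_s=\prod_{s\le\zeta^v<t,\,v<\xi_t}r^v/A(\widetilde Y_{\zeta^v})$ has conditional expectation $1$ by the same computation as in \eqref{cond-mean1}. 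Hence $\widetilde{\mathbf P}_x(\widetilde\eta^{(2)}_t\mid\widetilde\cF_s\vee\widehat\cG)=\widetilde\eta^{(2)}_s$. The stated rewriting $\widetilde\eta^{(2)}_t={\mathbf 1}_{\{\xi_t\in L_t\}}\prod_{v<\xi_t}r^v/A(\widetilde Y_{\zeta^v})$ is immediate: on $\{\xi_t=\dag\}$ we have $v=\dag-1<\xi_t$ with $r^v=0$, so the product vanishes, matching the indicator.

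The main obstacle, and the point I would be most careful about, is the bookkeeping of which $\sigma$-field conditioning is performed against and verifying that the "extra" randomness in $\widetilde\cF_t$ is genuinely irrelevant to each $\widetilde\eta^{(i)}$. In part (1) one must confirm that conditioning on $\cG\vee\sigma(\zeta^{\dag-1})$ really does reduce the fission-time process along the spine to an honest Poisson random measure (the side subtrees and the uniform spine-selection must be integrated out cleanly from \eqref{spine representation}), and that stopping at $\zeta^{\dag-1}$ is compatible with the filtration $\{\widetilde\cF_t\}$. In part (2) the delicate point is that $\widehat\cG$ — unlike $\widetilde\cG$ — does \emph{not} contain the offspring numbers $r^v$, so the conditional law of the $r^v$ along the spine is nondegenerate and the computation genuinely uses the independence and mean-$A$ properties from the preceding lemma; one should also check that on the event $\{\xi_t=\dag\}$ the definitions are consistent so that no division-by-zero issue arises (the convention $v=\dag-1$ and $r^v=0$ handles this, and the product over $v<\xi_t$ excludes $v=\dag-1$ itself, so all denominators $A(\widetilde Y_{\zeta^v})$ are strictly positive). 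Once this bookkeeping is in place, both parts follow from Lemmas \ref{HH1-results-0} and \eqref{cond-mean1} essentially by stopping and by the tower property. \qed
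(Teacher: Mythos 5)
Your proof follows essentially the same route as the paper: for part (1) the key input is Lemma \ref{HH1-results-0} (i.e.\ \eqref{Possion}), and for part (2) the key input is \eqref{cond-mean1}. The difference is in presentation of part (1). The paper proceeds by a direct Markov-property computation conditioned on $\widetilde\cF_t\vee\cG\vee\sigma(\zeta^{\dag-1})$, splitting explicitly into the cases $\{\xi_t\in L_t\}$ and $\{\xi_t=\dag\}$: on $\{\xi_t=\dag\}$ the process is already frozen at $\widetilde\eta^{(1)}_t$ so the martingale identity is trivial, and on $\{\xi_t\in L_t\}$ the future spine fission times in $(t,t+s]$ are treated via \eqref{Possion}. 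You instead observe that $\widetilde\eta^{(1)}_t=\eta^{(1)}_{t\wedge\zeta^{\dag-1}}$ and invoke ``a stopped martingale is a martingale.'' This identification is correct and slicker; the two routes are mathematically equivalent after unwinding.

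The caveat, which you yourself flag as the ``main obstacle'' but do not fully discharge, is the claim that conditionally on $\cG\vee\sigma(\zeta^{\dag-1})$ the fission times on $[0,\zeta^{\dag-1}]$ are a Poisson random measure with intensity $\beta(\widetilde Y_t)\,\mathrm dt$, and that stopping at $\zeta^{\dag-1}$ is innocuous. Note that $\zeta^{\dag-1}$ is not an $\{\cL_t\}$-stopping time in the sense of Lemma \ref{HH1-results-0} --- it is the first fission time carrying the mark $r^v=0$, hence depends on the offspring numbers as well as on $\mathbf n$. Thus ``stopped martingale is a martingale'' must be applied to the \emph{marked} point process, not to $\mathbf n$ alone, and the conditional argument needs to spell out why conditioning on the value of $\zeta^{\dag-1}$ (which pins down that the fission at $\zeta^{\dag-1}$ has zero offspring and that earlier spine fissions do not) interacts correctly with the exponential formula \eqref{Possion}. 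The paper's two-case Markov-property computation avoids phrasing it this way by working forward in time from $\widetilde\cF_t$ on $\{\xi_t\in L_t\}$, but it silently relies on the same point when it passes to the law $L^{\beta(\widetilde Y_{t+\cdot})}$ inside a conditional expectation that already carries $\sigma(\zeta^{\dag-1})$. So your reformulation is fine, but you should expand the one sentence beginning ``this is exactly the situation of Lemma \ref{HH1-results-0}'' into a genuine argument that the conditional distribution is what you claim, or else follow the paper and do the two-case computation directly. Part (2) of your proof matches the paper's proof essentially verbatim and is fine.
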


\begin{proof}  (1) First note that if $\xi_t\in L_t$ then $\zeta^{\dag-1}>t$, and if $\xi_t=\dag$ then $\zeta^{\dag-1}\leq t$. For $s,t\ge 0$, by the Markov property, we have
\begin{align*}
&\widetilde
{\mathbf P}_x\left[\left.\widetilde\eta^{(1)}_{t+s}\right|\widetilde\cF_t\vee\cG\vee\sigma(\zeta^{\dag-1})\right]\\
=&\widetilde {\mathbf P}_x\left[\left.\prod_{v<\xi_{t+s}}
A(\widetilde Y_{\zeta^v})
\cdot\exp\left(-\int^{(t+s)\wedge \zeta^{\dag-1}}_0((A-1)\beta)(\widetilde
Y_r)\mathrm dr\right)\right|\widetilde\cF_t\vee\cG\vee\sigma(\zeta^{\dag-1})\right]\\
=& {\mathbf 1}_{\{\xi_{t}\in L_t\}}\prod_{v<\xi_{t}}
A(\widetilde Y_{\zeta^v})\cdot
\exp\left(-\int^{t\wedge\zeta^{\dag-1}}_0((A-1)\beta)(\widetilde Y_r)\mathrm dr\right)
\\
&\cdot
\widetilde {\mathbf P}_{x}\left[\left.\prod_{\xi_{t}\le v<\xi_{t+s}}
A(\widetilde Y_{\zeta^v})\cdot
\exp\left(-\int^{s\wedge\zeta^{\dag-1}}_0((A-1)\beta)(\widetilde
Y_{r+t})\mathrm dr\right)\right|\widetilde\cF_t\vee\cG\vee\sigma(\zeta^{\dag-1})\right]\\
&+{\mathbf 1}_{\{\xi_t=\dag\}}\prod_{v<\xi_{t}}
A(\widetilde Y_{\zeta^v})\cdot
\exp\left(-\int^{t\wedge\zeta^{\dag-1}}_0((A-1)\beta)(\widetilde Y_r)\mathrm dr\right)\\
=& {\mathbf 1}_{\{\xi_{t}\in L_t\}}\widetilde\eta_t^{(1)}\exp\left(-\int^{s\wedge
\zeta^{\dag-1}}_0((A-1)\beta)(\widetilde
Y_{r+t})\mathrm dr\right)\widetilde
{\mathbf P}^x\left[\left.\prod_{\xi_{t}\le v<\xi_{t+s}}
A(\widetilde Y_{\zeta^v}) \right|
\cG\vee\sigma(\zeta^{\dag-1})\right]\\
&+{\mathbf 1}_{\{\xi_t=\dag\}}\widetilde\eta^{(1)}_t.
\end{align*}
For fixed $t>0$, given the path of $\widetilde Y$, the collection of
fission times $\{\{\zeta^v: \xi_{{t}}\le v<\xi_{{t+s}}\}: s\ge
0\} $ is a Poisson random measure  with intensity $\beta(\widetilde
Y_{t+s})\mathrm ds$, and has law $L^{\beta(\widetilde Y_{t+\cdot})}$. It
follows from \eqref{Possion} that
$$
\widetilde{\mathbf P}_x
\left[\left.\prod_{\xi_{n_{t}}\le v<\xi_{t+s}}
A(\widetilde Y_{\zeta^v})
\right|\cG\vee\sigma(\zeta^{\dag-1})\right]=\exp\left(\int^{s\wedge \zeta^{\dag-1}}_0((A-1)\beta)(\widetilde
Y_{r+t})\mathrm dr\right).
$$
Thus
$$
\widetilde {\mathbf P}_x\left[\left.\widetilde\eta^{(1)}_{t+s}
\right|\widehat\cF_t\vee\cG\vee\sigma(\zeta^{\dag-1})\right]=\widetilde\eta_t^{(1)}.
$$

(2) For $s,t\ge 0$, by the Markov property, we have
$$
\begin{array}{rl}
\widetilde{\mathbf P}_x
\left[\left.\widetilde\eta^{(2)}_{t+s}
\right|\widetilde\cF_t\vee\widehat\cG\right]
=&\displaystyle\widetilde
{\mathbf P}_x\left[\left.\prod_{v<\xi_{{t+s}}}\frac{r^v}
{A(\widetilde Y_{\zeta^v})}
\right|\widetilde\cF_t\vee\widehat\cG\right]\\
=& \displaystyle {\mathbf 1}_{\{\xi_{t}\in L_t\}}\prod_{v<\xi_{t}}\frac{r^v}
{A(\widetilde Y_{\zeta^v})}\cdot
\displaystyle\widetilde
{\mathbf P}_{x}\left[\left.\prod_{\xi_{t}\leq
v<\xi_{{s+t}}}\frac{r^v}
{A(\widetilde Y_{\zeta^v})}\right|\widehat\cG\right]\\
=&\displaystyle \widetilde\eta^{(2)}_t,
\end{array}
$$ where in the last
equality we used \eqref{cond-mean1}. Thus we have
$$
\widetilde {\mathbf P}_x\left[\left.\widetilde\eta^{(2)}_{t+s}\right|
\widetilde\cF_t\vee\widehat\cG\right]=\widetilde\eta^{(2)}_t.
$$
\qed
\end{proof}

The effect of a change of measure using the
martingale $\{\widetilde\eta_t^{(1)}, t\ge 0\}$ will
change the fission rate along the spine from $\beta(\widetilde
Y_t)$ to $(A\beta)(\widetilde Y_t)$.
The effect of a change of measure using the martingale
$\{\widetilde\eta_t^{(2)}, t\ge 0\}$ will change the offspring distribution from
 $P(\widetilde{Y}_{\zeta_i})=(p_k(\widetilde Y_{\zeta_i}))_{k\ge1}$
to the size-biased distribution
$\dot P(\widetilde{Y}_{\zeta_i})=(\dot p_k(Y_{\zeta_i}))_{k\ge 1}$,
where $\dot p_k(y)$ is defined by
$$
\dot p_k(y)=\frac{kp_k(y)}{A(y)},\quad  k\ge 1,y\in E.
$$

Define
$$
\widetilde\eta^{(3)}_t(\phi):=\frac{\phi(\widetilde
Y_{t\wedge \zeta^{\dag-1}})}{\phi(x)}\exp\left(-\int_0^{t\wedge \zeta^{\dag-1}}
(\lambda_1-(A-1)\beta)(\widetilde Y_s)\mathrm ds
\right), \quad t\ge 0.
$$
$\{\widetilde\eta^{(3)}_t(\phi), t\ge 0\}$ is a $\widetilde {\mathbf P}_x$-martingale
with respect to $\{\cG_t\vee \sigma(\zeta^{\dag-1}), t\ge 0\}$,
and it is also
a $\widetilde {\mathbf P}_x$-martingale with respect to $\{\widetilde\cF_t,
t\ge 0\}$, since $\widetilde\eta_t^{(3)}(\phi)$ can be expressed as
\begin{align}\label{decom-eta}
\widetilde\eta^{(3)}_t(\phi)=&\sum_{u\in
L_t}\phi(x)^{-1}\phi(\widetilde
Y^u_t)\exp\left(-\int_0^{t}{(\lambda_1-(A-1)\beta)}(\widetilde
Y_s)\mathrm ds\right){\mathbf 1}_{\{\xi_t=u\}}\nonumber\\
&+\sum_{u\in
D_t}\phi(x)^{-1}\phi(\widetilde Y^u_{\zeta^u})\exp\left(-\int_0^{\zeta^u}{(\lambda_1-(A-1)\beta)}
(\widetilde Y_s)\mathrm ds{\mathbf 1}_{\{\dag-1=u\}}\right).
\end{align}
Define
$$
\eta_t(\phi):=\widetilde\eta_t^{(1)}\widetilde\eta_t^{(2)}
\widetilde\eta^{(3)}_t(\phi), \quad t\ge 0.
$$
It is easy to check, by the definition of $\widetilde\eta_t^{(1)}$, $\widetilde\eta_t^{(2)}$,
and $\widetilde\eta_t^{(3)}(\phi)$, that
\begin{equation}\label{def-eta2}
\widetilde\eta_t(\phi)={\mathbf 1}_{\{\xi_t\in L_t\}}\prod_{v<\xi_{t}}r^v\frac{\phi(\widetilde
Y_{t})}{\phi(x)}e^{-\lambda_1 t}.
\end{equation}

\begin{lemma}
$\{\widetilde \eta_t(\phi), t\ge 0\}$ is a
$\widetilde {\mathbf P}_x$-martingale with respect to
$\{\widetilde\cF_t, t\ge 0\}$.
\end{lemma}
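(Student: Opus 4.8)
The plan is to deduce that $\{\widetilde\eta_t(\phi), t\ge 0\}$ is a $\widetilde{\mathbf P}_x$-martingale with respect to $\{\widetilde\cF_t, t\ge 0\}$ by combining the three factor martingales identified above via the product rule of Lemma \ref{mart-prod}, applied twice. The key point is to choose the sub-$\sigma$-fields correctly so that the hypotheses of Lemma \ref{mart-prod} are met at each step. Recall from the three preceding lemmas that: (a) $\{\widetilde\eta_t^{(1)}, t\ge 0\}$ is a $\widetilde{\mathbf P}_x(\cdot\,|\,\cG\vee\sigma(\zeta^{\dag-1}))$-martingale with respect to $\{\widetilde\cF_t\}$; (b) $\{\widetilde\eta_t^{(2)}, t\ge 0\}$ is a $\widetilde{\mathbf P}_x(\cdot\,|\,\widehat\cG)$-martingale with respect to $\{\widetilde\cF_t\}$; and (c) $\{\widetilde\eta_t^{(3)}(\phi), t\ge 0\}$ is a $\widetilde{\mathbf P}_x$-martingale with respect to $\{\widetilde\cF_t\}$ (which we may read as a $\widetilde{\mathbf P}_x(\cdot\,|\,\{\emptyset,\widetilde\cT\})$-martingale, the trivial $\sigma$-field). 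Note the nesting $\{\emptyset,\widetilde\cT\}\subset\cG\vee\sigma(\zeta^{\dag-1})\subset\widehat\cG$.

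First I would form the product $\widetilde\eta_t^{(2)}\,\widetilde\eta_t^{(1)}$ and apply Lemma \ref{mart-prod} with $\cH_t=\widetilde\cF_t$, $\cK_1=\cG\vee\sigma(\zeta^{\dag-1})$, $\cK_2=\widehat\cG$, $U_t^1=\widetilde\eta_t^{(2)}$ and $U_t^2=\widetilde\eta_t^{(1)}$. I need to check: (i) $\cK_1\subset\cK_2$, which holds since $\widehat\cG=\sigma((\widetilde Y_s,\xi_s:s\ge 0),(\zeta^u:u<\dag))$ contains both the path of the spine and the fission times along it; (ii) $U_t^1=\widetilde\eta_t^{(2)}\in\cK_2=\widehat\cG$, which holds because $\widetilde\eta_t^{(2)}=\prod_{v<\xi_t}r^v/A(\widetilde Y_{\zeta^v})$ depends only on the spine path, the spine nodes, the fission times along the spine, and the family sizes $r^v$ at spine nodes — all of which are $\widehat\cG$-measurable (here one uses that each $r^v$ for $v<\xi_t$ is determined by the number of children of a spine node, which is encoded through the $\zeta^u$'s for $u<\dag$ together with the spine structure); (iii) $U_t^2=\widetilde\eta_t^{(1)}\in\widetilde\cF_t$, clear from its definition; and (iv) the integrability $\widetilde{\mathbf P}_x(|\widetilde\eta_t^{(1)}\widetilde\eta_t^{(2)}|)<\infty$, which follows because the product is nonnegative and by the tower property its $\widetilde{\mathbf P}_x$-expectation equals $\widetilde{\mathbf P}_x(\widetilde\eta_t^{(1)}\widetilde\eta_t^{(2)})$; using first that $\widetilde\eta_t^{(1)}$ has conditional mean $1$ given $\cG\vee\sigma(\zeta^{\dag-1})$ and then that $\widetilde\eta_t^{(2)}$ has conditional mean $1$ given $\widehat\cG$ (by \eqref{cond-mean1}) gives expectation $1<\infty$. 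Lemma \ref{mart-prod} then yields that $\{\widetilde\eta_t^{(1)}\widetilde\eta_t^{(2)}, t\ge 0\}$ is a $\widetilde{\mathbf P}_x(\cdot\,|\,\cG\vee\sigma(\zeta^{\dag-1}))$-martingale with respect to $\{\widetilde\cF_t\}$.

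Next I would apply Lemma \ref{mart-prod} a second time with $\cH_t=\widetilde\cF_t$, $\cK_1=\{\emptyset,\widetilde\cT\}$, $\cK_2=\cG\vee\sigma(\zeta^{\dag-1})$, $U_t^1=\widetilde\eta_t^{(1)}\widetilde\eta_t^{(2)}$ and $U_t^2=\widetilde\eta_t^{(3)}(\phi)$. Again $\cK_1\subset\cK_2$ trivially; $U_t^1=\widetilde\eta_t^{(1)}\widetilde\eta_t^{(2)}$: the factor $\widetilde\eta_t^{(2)}$ lies in $\widehat\cG$ but \emph{not} obviously in $\cG\vee\sigma(\zeta^{\dag-1})$, and $\widetilde\eta_t^{(1)}$ lies in $\widetilde\cF_t$ but not in $\cG\vee\sigma(\zeta^{\dag-1})$ either, so the hypothesis $U_t^1\in\cK_2$ is the delicate point — I address this below; $U_t^2=\widetilde\eta_t^{(3)}(\phi)\in\widetilde\cF_t$ by \eqref{decom-eta}; and the integrability $\widetilde{\mathbf P}_x(|\widetilde\eta_t^{(1)}\widetilde\eta_t^{(2)}\widetilde\eta_t^{(3)}(\phi)|)<\infty$ follows by the same iterated-conditioning argument together with the fact that $\widetilde\eta_t^{(3)}(\phi)$ is a genuine martingale with mean $1$, since $\widetilde{\mathbf P}_x(\widetilde\eta_t(\phi))=\widetilde{\mathbf P}_x(\widetilde{\mathbf P}_x(\widetilde{\mathbf P}_x(\widetilde\eta_t^{(1)}\widetilde\eta_t^{(2)}\widetilde\eta_t^{(3)}(\phi)\,|\,\widehat\cG)\,|\,\cG\vee\sigma(\zeta^{\dag-1})))=1$ after peeling off each conditional mean in turn. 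The hard part is exactly the verification that $U_t^1=\widetilde\eta_t^{(1)}\widetilde\eta_t^{(2)}$ is $\cG\vee\sigma(\zeta^{\dag-1})$-measurable. To handle this, I would not apply the second product step in that form; instead I would re-bracket: note from \eqref{def-eta2} that $\widetilde\eta_t^{(1)}\widetilde\eta_t^{(2)}\widetilde\eta_t^{(3)}(\phi)=\mathbf 1_{\{\xi_t\in L_t\}}\prod_{v<\xi_t}r^v\cdot\phi(\widetilde Y_t)\phi(x)^{-1}e^{-\lambda_1 t}=\widetilde\eta_t(\phi)$, so equivalently I group $\widetilde\eta_t^{(2)}$ with $\widetilde\eta_t^{(3)}(\phi)$. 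One checks directly that $\widetilde\eta_t^{(2)}\widetilde\eta_t^{(3)}(\phi)$, being a function of the spine path, spine structure, fission times along the spine and family sizes $r^v$, is $\widehat\cG$-measurable, and moreover that its conditional $\widetilde{\mathbf P}_x(\cdot\,|\,\cG\vee\sigma(\zeta^{\dag-1}))$-expectation reproduces the corresponding $\widetilde\eta^{(3)}$-type martingale (using \eqref{cond-mean1}), so $\{\widetilde\eta_t^{(2)}\widetilde\eta_t^{(3)}(\phi)\}$ is a $\widetilde{\mathbf P}_x(\cdot\,|\,\cG\vee\sigma(\zeta^{\dag-1}))$-martingale w.r.t.\ $\{\widetilde\cF_t\}$ lying in $\widehat\cG$; then Lemma \ref{mart-prod} applied with $\cK_1=\cG\vee\sigma(\zeta^{\dag-1})$, $\cK_2=\widehat\cG$, $U_t^1=\widetilde\eta_t^{(1)}$ (which is $\widehat\cG$-measurable and a martingale given the smaller field), $U_t^2=\widetilde\eta_t^{(2)}\widetilde\eta_t^{(3)}(\phi)\in\widetilde\cF_t$ gives that $\{\widetilde\eta_t^{(1)}\widetilde\eta_t^{(2)}\widetilde\eta_t^{(3)}(\phi)\}=\{\widetilde\eta_t(\phi)\}$ is a $\widetilde{\mathbf P}_x(\cdot\,|\,\cG\vee\sigma(\zeta^{\dag-1}))$-martingale w.r.t.\ $\{\widetilde\cF_t\}$; finally a trivial last application (or a direct check using the $\widetilde{\mathbf P}_x$-martingale property of $\widetilde\eta^{(3)}$) removes the conditioning on $\cG\vee\sigma(\zeta^{\dag-1})$ and yields the unconditional $\widetilde{\mathbf P}_x$-martingale property, completing the proof. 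I expect the bookkeeping of which factor is measurable with respect to which $\sigma$-field — in particular showing $\widetilde\eta_t^{(2)}\widetilde\eta_t^{(3)}(\phi)\in\widehat\cG$ and that conditioning on $\cG\vee\sigma(\zeta^{\dag-1})$ integrates out the size-biasing factor correctly — to be the only real obstacle; the martingale identities themselves are immediate from the three preceding lemmas.
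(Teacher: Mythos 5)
Your overall plan---iterate Lemma~\ref{mart-prod} twice to bundle $\widetilde\eta^{(1)}_t,\widetilde\eta^{(2)}_t,\widetilde\eta^{(3)}_t(\phi)$---is exactly the paper's strategy, but you have reversed the roles of $U^1_t$ and $U^2_t$ in the lemma, and the reversal forces you into a false measurability claim. Lemma~\ref{mart-prod} is asymmetric: $U^1_t$ is the $P(\cdot\,|\,\cK_1)$-martingale, $U^2_t$ is the $P(\cdot\,|\,\cK_2)$-martingale, and the measurability constraints are $U^1_t\in\cK_2$, $U^2_t\in\cH_t$. Since $\widetilde\eta^{(1)}_t$ is the $\cG\vee\sigma(\zeta^{\dag-1})$-conditional martingale and $\widetilde\eta^{(2)}_t$ is the $\widehat\cG$-conditional martingale, and $\cG\vee\sigma(\zeta^{\dag-1})\subset\widehat\cG$, the only admissible assignment in the first step is $U^1_t=\widetilde\eta^{(1)}_t$, $U^2_t=\widetilde\eta^{(2)}_t$. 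Then the measurability checks are $\widetilde\eta^{(1)}_t\in\widehat\cG$ (true: it depends only on the spine path, the spine nodes, and the fission times along the spine) and $\widetilde\eta^{(2)}_t\in\widetilde\cF_t$ (clear). Your swapped version instead requires $\widetilde\eta^{(2)}_t\in\widehat\cG$, and the justification you give---that the family sizes $r^v$ along the spine ``are encoded through the $\zeta^u$'s for $u<\dag$ together with the spine structure''---is incorrect: knowing that $\xi$ passes from $v$ to $vj$ only tells you $r^v\ge j$, not the value of $r^v$. Indeed the paper's definition $\widetilde\cG:=\sigma(\cG,(\xi_s:s\ge 0),(\zeta^u:u<\dag),(r^u:u<\dag))$ explicitly adjoins the $r^u$'s to $\widehat\cG$ precisely because they are \emph{not} already $\widehat\cG$-measurable; if they were, $\widetilde\cG$ and $\widehat\cG$ would coincide.

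The same error propagates into your re-bracketing: the grouped factor $\widetilde\eta^{(2)}_t\widetilde\eta^{(3)}_t(\phi)$ still depends on the $r^v$'s and hence is not $\widehat\cG$-measurable, so the repair does not go through either. In fact none of the ``delicacy'' you are fighting exists once the roles are assigned correctly: for the second application of Lemma~\ref{mart-prod}, take $\cK_1$ trivial, $\cK_2=\cG\vee\sigma(\zeta^{\dag-1})$, $U^1_t=\widetilde\eta^{(3)}_t(\phi)$ (a plain $\widetilde{\mathbf P}_x$-martingale, i.e.\ a $\cK_1$-conditional martingale) and $U^2_t=\widetilde\eta^{(1)}_t\widetilde\eta^{(2)}_t$ (the $\cK_2$-conditional martingale produced by step one). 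The checks $\widetilde\eta^{(3)}_t(\phi)\in\cG\vee\sigma(\zeta^{\dag-1})$ and $\widetilde\eta^{(1)}_t\widetilde\eta^{(2)}_t\in\widetilde\cF_t$ are both immediate, and you never need $\widetilde\eta^{(1)}_t\widetilde\eta^{(2)}_t$ (nor $\widetilde\eta^{(2)}_t$, nor $\widetilde\eta^{(2)}_t\widetilde\eta^{(3)}_t(\phi)$) to lie in any of the spine $\sigma$-fields $\cG$, $\cG\vee\sigma(\zeta^{\dag-1})$, or $\widehat\cG$.
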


\begin{proof}
$\{\widetilde\eta_t^{(1)}, t\ge 0\}$ is a
$\widetilde {\mathbf P}_x(\cdot |\
{\cG}\vee\sigma(\zeta^{\dag-1}))$-martingale with respect to $\{\widetilde\cF_t,t\ge 0\}$,
and $\{\widetilde\eta_t^{(2)}, t\ge 0\}$ is a
$\widetilde {\mathbf P}_x(\cdot
|\widehat\cG)$-martingale with respect to $\{\widetilde\cF_t,t\ge
0\}$. Note that $\cG\vee\sigma(\zeta^{\dag-1})\subset\widehat\cG$, and
$\widetilde\eta^{(1)}_t\in\widehat \cG$,
$\widetilde\eta^{(2)}_t\in\widetilde \cF_t$ for any $t\ge 0$. Using
Lemma \ref{mart-prod},
$\{\widetilde\eta^{(1)}_t\widetilde\eta^{(2)}_t, t\ge 0\}$ is a
$\widetilde {\mathbf P}_x(\cdot|\cG\vee\sigma(\zeta^{\dag-1}))$-martingale
with respect to $\{\widetilde\cF_t,t\ge
0\}$.
Note that $\widetilde\eta^{(3)}_t(\phi)\in \cG\vee\sigma(\zeta^{\dag-1})$ and
$\widetilde\eta^{(1)}_t\widetilde\eta^{(2)}_t\in\widetilde \cF_t$
for any $t\ge 0$. Using Lemma \ref{mart-prod} again, we see that
$\{\widetilde \eta_t(\phi)=\widetilde\eta^{(1)}_t\widetilde\eta^{(2)}_t\widetilde\eta^{(3)}_t(\phi), t\ge 0\}$
is a $\widetilde {\mathbf P}_x$-martingale with respect to
$\{\widetilde\cF_t,t\ge 0\}$.
\qed
\end{proof}

\begin{lemma}\label{proj-M} $M_t(\phi)$ is the projection of $\widetilde\eta_t(\phi)$
onto $\cF_t$, i.e.,
$$M_t(\phi)=\widetilde {\mathbf P}_x(\widetilde\eta_t(\phi)|\cF_t).$$
\end{lemma}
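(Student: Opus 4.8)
The plan is to compute the conditional expectation $\widetilde{\mathbf P}_x(\widetilde\eta_t(\phi)|\cF_t)$ directly using the representation \eqref{def-eta2} of $\widetilde\eta_t(\phi)$ together with the decomposition formula for conditional expectations given $\cF_t$ established right after Theorem \ref{decom-f}. First I would write, using \eqref{def-eta2},
$$
\widetilde\eta_t(\phi)={\mathbf 1}_{\{\xi_t\in L_t\}}\prod_{v<\xi_{t}}r^v\cdot\frac{\phi(\widetilde Y_t)}{\phi(x)}e^{-\lambda_1 t}
=\sum_{u\in L_t}\left(\prod_{v<u}r^v\right)\frac{\phi(Y^u_t)}{\phi(x)}e^{-\lambda_1 t}{\mathbf 1}_{\{\xi_t=u\}},
$$
which exhibits $\widetilde\eta_t(\phi)$ in exactly the form \eqref{e-decom-f} of Theorem \ref{decom-f}, with $f^u(\tau,M)=\left(\prod_{v<u}r^v\right)\phi(Y^u_t)\phi(x)^{-1}e^{-\lambda_1 t}\in\cF_t$ for $u\in L_t$, and with the $D_t$-part equal to zero (since the indicator ${\mathbf 1}_{\{\xi_t\in L_t\}}$ kills the contribution of the event $\xi_t=\dag$).

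Next I would apply the conditional-expectation identity obtained from \eqref{many-to-one} (equivalently, the displayed formula just before \eqref{decom-P}): for bounded $f\in\widetilde\cF_t$ with decomposition \eqref{e-decom-f},
$$
\widetilde{\mathbf P}_x(f|\cF_t)=\sum_{u\in L_t}f^u(\tau,M)\prod_{v<u}\frac{1}{r^v}+\sum_{u\in D_t}f^u(\tau,M)\prod_{v<u}\frac{1}{r^v}.
$$
Substituting $f^u(\tau,M)=\left(\prod_{v<u}r^v\right)\phi(Y^u_t)\phi(x)^{-1}e^{-\lambda_1 t}$ for $u\in L_t$ and $f^u\equiv 0$ for $u\in D_t$, the factors $\prod_{v<u}r^v$ and $\prod_{v<u}(1/r^v)$ cancel, leaving
$$
\widetilde{\mathbf P}_x(\widetilde\eta_t(\phi)|\cF_t)=\sum_{u\in L_t}\frac{\phi(Y^u_t)}{\phi(x)}e^{-\lambda_1 t}=e^{-\lambda_1 t}\frac{\langle\phi,X_t\rangle}{\phi(x)}=M_t(\phi),
$$
since $X_t=\sum_{u\in L_t}\delta_{Y^u_t}$ and, with $\mu=\delta_x$, $\langle\phi,\mu\rangle=\phi(x)$. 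This gives the claimed identity.

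The only points requiring care — and the place where I expect the minor technical obstacle to be — are (a) justifying that the indicator ${\mathbf 1}_{\{\xi_t\in L_t\}}$ indeed removes the spine-death term so that the $D_t$-sum vanishes, which follows from the last equality in \eqref{def-eta2} and the fact that on $\{\xi_t=\dag\}$ the spine has already died; and (b) handling integrability, i.e.\ checking that $\widetilde\eta_t(\phi)$ is $\widetilde{\mathbf P}_x$-integrable so that the conditional expectation is well defined — but this is immediate since $\{\widetilde\eta_t(\phi),t\ge0\}$ was just shown to be a $\widetilde{\mathbf P}_x$-martingale (hence $\widetilde{\mathbf P}_x$-integrable), and $\widetilde{\mathbf P}_x(\widetilde\eta_t(\phi))=\widetilde\eta_0(\phi)=1=\mathbf P_x(M_t(\phi))$ provides a useful consistency check. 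Strictly speaking, \eqref{e-decom-f} is stated for $f\in\widetilde\cF_t$ and the conditional-expectation formula was derived for bounded $f$; to cover the possibly unbounded $\widetilde\eta_t(\phi)$ I would either invoke monotone convergence on $\widetilde\eta_t(\phi)\wedge N$ as $N\to\infty$ (all terms are nonnegative) or simply note that $\phi$ and the product $\prod_{v<u}r^v$ are $\cF_t$-measurable and nonnegative so the linearity of conditional expectation applies termwise to the (a.s.\ finite) sum over $L_t$.
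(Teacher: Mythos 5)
Your proof is correct and follows essentially the same route as the paper: rewrite $\widetilde\eta_t(\phi)$ via \eqref{def-eta2} as $\sum_{u\in L_t}\bigl(\prod_{v<u}r^v\bigr)\phi(Y^u_t)\phi(x)^{-1}e^{-\lambda_1 t}{\mathbf 1}_{\{\xi_t=u\}}$, then use $\widetilde{\mathbf P}_x({\mathbf 1}_{\{\xi_t=u\}}\mid\cF_t)=\prod_{v<u}r_v^{-1}$ for $u\in L_t$ so that the products cancel and the sum collapses to $M_t(\phi)$. Your extra remarks on the vanishing $D_t$-contribution and on extending the conditional-expectation identity from bounded to nonnegative integrands are sound (and slightly more careful than the paper, which takes them as given), but do not constitute a different argument.
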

\begin{proof}
By \eqref{def-eta2},
$$
\widetilde\eta_t(\phi)=\sum_{u\in L_t}\prod_{v<u}r^ve^{-\lambda_1
t}\phi(x)^{-1}\phi(Y^u_t){\mathbf 1}_{\{\xi_t=u\}}.
$$
Thus
$$
\begin{array}{rl}\widetilde {\mathbf P}_x(\widetilde\eta_t(\phi)|\cF_t)
=&\displaystyle\sum_{u\in
L_t}e^{-\lambda_1t}\phi(x)^{-1}\phi(Y^u_t)\prod_{v<u}r^v\
\widetilde {\mathbf P}_x({\mathbf 1}_{\{\xi_t=u\}}|\cF_t)\\
=&\displaystyle\sum_{u\in
L_t}e^{-\lambda_1t}\phi(x)^{-1}\phi(Y^u_t)=M_t(\phi),
\end{array}
$$
where in the second equality we used the fact that
$$
\widetilde
{\mathbf P}_x({\mathbf 1}_{ L_t }(u){\mathbf 1}_{\{\xi_t=u\}}|\cF_t)={\mathbf 1}_{ L_t }(u){\mathbf 1}_{\{\xi_t=u\}}\prod_{v<u}\frac1{r^v}.
$$
\qed
\end{proof}

Now we define a probability measure $\widetilde {\mathbf Q}_x$ on
$(\widetilde\cT, \widetilde\cF)$ by
\begin{equation}\label{def-tildeQ}
\frac{\mathrm d \widetilde {\mathbf Q}_x}{\mathrm d \widetilde
{\mathbf P}_x}\left|_{\widetilde\cF_t}\right.=
\widetilde\eta_t(\phi), \quad t\ge 0,
\end{equation}
which, by \eqref{def-eta2}, says that on $\widetilde\cF_t$,
$$
\mathrm d \widetilde {\mathbf Q}_x=\widetilde
\eta_t(\phi)\mathrm d \widetilde {\mathbf P}_x={\mathbf 1}_{\{\xi_t\in L_t\}}\prod_{v<\xi_{t}}r^v\frac{\phi(\widetilde
Y_{t})}{\phi(x)}e^{-\lambda_1 t}\mathrm d \widetilde {\mathbf P}_x.
$$
Hence we have $\widetilde {\mathbf Q}_x(\xi_t\in L_t)=1$ for any $t\geq 0$, which implies that $\widetilde {\mathbf Q}_x(\xi_t\in L_t, \forall t\geq 0)=1$.
$$
\begin{array}{rl}\mathrm d \widetilde {\mathbf Q}_x
=&\displaystyle {\mathbf 1}_{\{\xi_t\in L_t\}} \frac{\phi(\widetilde
Y_{t})}{\phi(x)}\exp\left(-\int_0^{t}
(\lambda_1-(A-1)\beta)(\widetilde
Y_s)\mathrm ds\right)\mathrm d\Pi_x(\widetilde Y)\\
&\displaystyle\times \exp\left(-\int^t_0((A-1)\beta)(\widetilde
Y_s)\mathrm ds\right) \mathrm dL^{\beta(\widetilde Y)}
\prod_{v<\xi_{t}}p_{r^v}(\widetilde Y_{\zeta^v})
\prod_{j:\ vj\in O_v}\mathrm d
\mathbf P_{\widetilde Y_{\zeta^v}}^{t-\zeta^v}((\tau, M)^v_j)\\
=&\displaystyle {\mathbf 1}_{\{\xi_t\in L_t\}}\mathrm d\Pi^\phi_x(\widetilde Y)
\mathrm dL^{A\beta(\widetilde Y)}({\bf n})
\prod_{v<\xi_{t}} \frac{p_{r_v}(\widetilde Y_{\zeta^v})}{A(\widetilde Y_{\zeta^v})}
\prod_{j:\ vj\in O_v}\mathrm d\mathbf P_{\widetilde
Y_{\zeta^v}}^{t-\zeta^v}((\tau, M)^v_j)\\
=&\displaystyle {\mathbf 1}_{\{\xi_t\in L_t\}}\mathrm d\Pi^\phi_x(\widetilde Y)
\mathrm dL^{A\beta(\widetilde Y)}({\bf n})
\prod_{v<\xi_{t}}  \dot p_{r^v}(\widetilde Y_{\zeta^v})
\prod_{v<\xi_{t}}\frac{1}{r^v} \prod_{j:\ vj\in O_v}\mathrm d
\mathbf P_{\widetilde Y_{\zeta^v}}^{t-\zeta^v}((\tau, M)^v_j)\\
=&\displaystyle \mathrm d\Pi^\phi_x(\widetilde Y)
\mathrm dL^{A\beta(\widetilde Y)}({\bf n})
\prod_{v<\xi_{t}}  \dot p_{r^v}(\widetilde Y_{\zeta^v})
\prod_{v<\xi_{t}}\frac{1}{r^v} \prod_{j:\ vj\in O_v}
\mathrm d\mathbf P_{\widetilde Y_{\zeta^v}}^{t-\zeta^v}
((\tau, M)^v_j).\end{array}
$$
Thus the change of measure from $\widetilde {\mathbf P}_x$ to $\widetilde {\mathbf Q}_x$
has three effects: the spine will be changed to a Hunt process
with law $\Pi^\phi_x$,
its fission times will be changed and the
distribution of its family sizes will be sized-biased. More
precisely, under $\widetilde {\mathbf Q}_x$:
\begin{itemize}
\item[(i)] the root of $\tau$ is at $x$ at time 0, and the spine process
$\widetilde Y_t$ moves according to  $\Pi^\phi_x$;

\item[(ii)] given the trajectory $\widetilde{Y_\cdot}$ of the spine,
the fission times along the spine are distributed
according to $L^{(A\beta)(\widetilde{Y})},$ where
$L^{(A\beta)(\widetilde{Y})}$ is the law of a Poisson random measure
with intensity $(A\beta)(\widetilde Y_t)\mathrm dt$;

\item[(iii)] at the fission time of node $v$ on the spine, the single
spine particle is replaced by  a random number $r^v$ of offspring
with $r^v$ being distributed according to the law
$\dot P(\widetilde Y_{\zeta^v}):=(\dot p_k(\widetilde Y_{\zeta^v}))_{k\ge 1}$;

\item[(vi)] the spine is chosen uniformly from the $r^v$ offspring of $v$ at
the fission time of $v$;

\item[(v)]
the remaining $r^v-1$ particles $vj\in O_v$ give rise to
independent subtrees $(\tau, M)^{v}_j$, which evolve as independent
subtrees determined by the probability measure
$\mathbf{P}_{\widetilde Y_{\zeta^v}}$ shifted to the time of creation.
\end{itemize}

We define a measure $\mathbf Q_x$ on $(\widetilde\cT, \cF)$ by
$$
\mathbf Q_x:=\widetilde {\mathbf Q}_x|_{\cF}.
$$

\begin{thm}[Spine decomposition]\label{l:Qx}
$\mathbf Q_x$
is a martingale change of measure by the martingale
$\{M_t(\phi), t>0\}$: for any $t>0$,
$$
\left.\frac{\mathrm d \mathbf Q_x}{\mathrm d \mathbf P_x}\right|_{{\cal F}_t}=M_t(\phi).
$$
\end{thm}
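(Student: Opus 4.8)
The plan is to test $\mathbf{Q}_x$ against bounded $\cF_t$-measurable functions and collapse everything onto the projection identity of Lemma \ref{proj-M}. Fix $t>0$ and let $f$ be bounded and $\cF_t$-measurable. Since $\cF_t\subset\widetilde\cF_t$, we may view $f$ as an element of $\widetilde\cF_t$; then by the definition $\mathbf{Q}_x=\widetilde{\mathbf{Q}}_x|_{\cF}$ together with the change of measure \eqref{def-tildeQ},
$$
\mathbf{Q}_x(f)=\widetilde{\mathbf{Q}}_x(f)=\widetilde{\mathbf{P}}_x\bigl(\widetilde\eta_t(\phi)\,f\bigr).
$$

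Next I would condition on $\cF_t$ under $\widetilde{\mathbf{P}}_x$. As $f$ is $\cF_t$-measurable and, by Lemma \ref{proj-M}, $\widetilde{\mathbf{P}}_x(\widetilde\eta_t(\phi)\mid\cF_t)=M_t(\phi)$, the tower property yields
$$
\widetilde{\mathbf{P}}_x\bigl(\widetilde\eta_t(\phi)\,f\bigr)=\widetilde{\mathbf{P}}_x\bigl(f\,\widetilde{\mathbf{P}}_x(\widetilde\eta_t(\phi)\mid\cF_t)\bigr)=\widetilde{\mathbf{P}}_x\bigl(f\,M_t(\phi)\bigr).
$$
Finally, since $\widetilde{\mathbf{P}}_x$ is an extension of $\mathbf{P}_x$ onto $(\widetilde\cT,\widetilde\cF)$ (this is exactly the content of \eqref{many-to-one}) and $f\,M_t(\phi)$ is $\cF_t$-measurable, we get $\widetilde{\mathbf{P}}_x(f\,M_t(\phi))=\mathbf{P}_x(f\,M_t(\phi))$. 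Combining the three displays gives $\mathbf{Q}_x(f)=\mathbf{P}_x(f\,M_t(\phi))$ for every bounded $\cF_t$-measurable $f$, which is precisely the assertion $\mathrm d\mathbf{Q}_x/\mathrm d\mathbf{P}_x|_{\cF_t}=M_t(\phi)$.

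For completeness I would also note that the prescription $\mathrm d\mathbf{Q}_x|_{\cF_t}=M_t(\phi)\,\mathrm d\mathbf{P}_x$ is consistent in $t$, which is just the $\mathbf{P}_x$-martingale property of $\{M_t(\phi),t\ge 0\}$ from Lemma \ref{l:1.5}, and that $\widetilde{\mathbf{Q}}_x$ (hence $\mathbf{Q}_x$) is a bona fide probability measure because $\widetilde\eta_0(\phi)=1$ by \eqref{def-eta2} and $\{\widetilde\eta_t(\phi),t\ge 0\}$ is a $\widetilde{\mathbf{P}}_x$-martingale with respect to $\{\widetilde\cF_t,t\ge 0\}$. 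There is no serious obstacle: all the analytic work has already been done in Lemma \ref{proj-M} and in the construction of $\widetilde{\mathbf{P}}_x$ via \eqref{spine representation}. The only point demanding a little care is bookkeeping of measurability — ensuring that $f$ and $f\,M_t(\phi)$ really are $\cF_t$-measurable (so the conditioning and the restriction steps are both legitimate), and that the identification $\widetilde{\mathbf{P}}_x|_{\cF}=\mathbf{P}_x$ is applied only to $\cF$-measurable integrands.
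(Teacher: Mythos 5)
Your proof is correct and follows essentially the same route as the paper: the paper states an abstract lemma (restricting two measures to a sub-$\sigma$-algebra turns the Radon--Nikodym derivative into its conditional expectation) and then invokes Lemma~\ref{proj-M}, whereas you unroll that abstract lemma into the explicit three-step computation against bounded $\cF_t$-measurable test functions, using \eqref{def-tildeQ}, Lemma~\ref{proj-M}, and the fact that $\widetilde{\mathbf P}_x$ restricts to $\mathbf P_x$ on $\cF$. The substance is identical; your version just makes the appeal to the general restriction fact explicit rather than citing it.
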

\begin{proof} The result actually follow from a more general observation
that if $\widetilde\mu_1$ and $\widetilde\mu_2$ are two measures defined on a measure space $(\Omega, \widetilde {\cal S})$ with
Radon-Nikodym derivative
$$\frac{\mathrm d\widetilde\mu_2}{\mathrm d\widetilde\mu_1}= f,$$
and if ${\cal S}$ is a sub-¦Ò-algebra of $\widetilde {\cal S}$, then the two measures $\mu_1:= \widetilde \mu_1|_S$ and
$\mu_2:= \widetilde \mu_2|_S$ on $(\Omega, {\cal S})$ are related by the conditional expectation operation:
$$
\frac{\mathrm d\mu_2}{\mathrm d\mu_1}= \widetilde\mu_1(f|{\cal S}).
$$
For each fixed $t>0$, applying this general result with $(\Omega, \widetilde {\cal S})=(\widetilde {\cal T}, \widetilde{\cal F}_t)$, ${\cal S}={\cal F}_t$, $\widetilde\mu_2=\widetilde {\mathbf Q}_x$, and $\widetilde\mu_1=\widetilde {\mathbf P}_x$, and using Lemma \ref{proj-M}
yield the desired result.
\qed
\end{proof}

We still use $X_t(B)$ to denote the number of particles located in $B\in\cB(E)$ at
time $t$ in the marked tree with distinguished spine. Note that
 $$X_t(B)={\mathbf 1}_B(\widetilde Y_t)+\sum_{u\in L_t, u\neq\xi_{t}}{\mathbf 1}_B(Y^u_t).$$
The individuals $\{u\in L_t, u\neq\xi_{t}\}$ can be partitioned
into subtrees created from fissions along the spines,
and regarded as immigrants.
We may use the language of immigration to describe the system as follows:
under $\mathbf Q_x$,
(i) the spine process
$\widetilde Y_\cdot$ starts at $x$ at tome $0$, and moves according to
$\Pi^\phi_x$ and thus has infinite lifetime;
(ii) given the trajectory $\widetilde{Y_\cdot}$ of the spine,
the fission times along the spine are distributed
according to $L^{(A\beta)(\widetilde{Y})}$;
(iii) at the fission time of node $v$ on the spine,
$r^v-1$ particles are immigrated to the system
at $\widetilde Y_{\zeta^v}$,  the position of the spine,
with $r^v$ being distributed according to the law
$\dot P(\widetilde Y_{\zeta^v}):=(\dot p_k(\widetilde Y_{\zeta^v}))_{k\ge 1}$;
(vi) the immigrated particles give rise to the
independent subtrees, which evolve as independent
subtrees determined by the probability measure
$\mathbf P_{\widetilde Y_{\zeta_v}}$ shifted to the time of creation.
The above Theorem \ref{l:Qx} says that $\mathbf Q_x$ is the measure change
of $\mathbf P_x$ by the martingale $\{M_t(\phi), t\geq 0\}$.

\begin{thm}\label{t:spine-decom}
We have the following  decomposition for the martingale
$\{M_t(\phi), t\ge 0\}$:
\begin{equation}\label{e:spine-decom}
\widetilde {\mathbf Q}_x\left[\phi(x)M_t(\phi)\Big|\widetilde{\cG}\right] =
\phi(\widetilde Y_t)e^{-\lambda_1t}+
\sum_{u<\xi_{t}}(r^u-1)\phi(\widetilde
Y_{\zeta^u})e^{-\lambda_1\zeta^u}.
\end{equation}
\end{thm}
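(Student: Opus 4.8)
The plan is to read the decomposition off the immigration description of $\widetilde{\mathbf Q}_x$ and then average out the immigrant subtrees via the first-moment formula \eqref{expX} together with Assumption \ref{assume0}. Since the initial measure is $\delta_x$ we have $\phi(x)M_t(\phi)=e^{-\lambda_1 t}\langle\phi,X_t\rangle$, and $\widetilde{\mathbf Q}_x(\xi_t\in L_t,\ \forall t)=1$, so I would begin from $X_t=\delta_{\widetilde Y_t}+\sum_{u\in L_t,\ u\ne\xi_t}\delta_{Y^u_t}$. The combinatorial point is that every alive off-spine particle $u\in L_t\setminus\{\xi_t\}$ has a unique most recent ancestor $v$ on the spine, that this $v$ necessarily satisfies $v<\xi_t$ (because $\xi_t\in L_t$ has not yet branched), and that $u$ lies in the subtree rooted at one of the children $vj\in O_v$ of $v$. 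This gives the disjoint decomposition $\{u\in L_t:u\ne\xi_t\}=\bigsqcup_{v<\xi_t}\bigsqcup_{j:\,vj\in O_v}\{u\in L_t:vj\le u\}$ and hence
$$
\langle\phi,X_t\rangle=\phi(\widetilde Y_t)+\sum_{v<\xi_t}\ \sum_{j:\,vj\in O_v}\langle\phi,X^{vj}_{t-\zeta^v}\rangle ,
$$
where $X^{vj}$ denotes the branching Hunt process carried by the marked subtree $(\tau,M)^v_j$, born at time $\zeta^v$. This is an a.s.\ finite sum, with at most $\sum_{v<\xi_t}(r^v-1)$ terms since $n_t<\infty$ and each $r^v<\infty$.

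Next I would identify the conditional law of these subtrees given $\widetilde\cG$. The factorized expression for $\mathrm d\widetilde{\mathbf Q}_x$ obtained just before Theorem \ref{l:Qx} shows that the change of measure from $\widetilde{\mathbf P}_x$ alters only the spine motion, the fission rate along the spine, and the spine family-size law, while the subtree factors $\prod_{j:\,vj\in O_v}\mathrm d\mathbf P^{t-\zeta^v}_{\widetilde Y_{\zeta^v}}((\tau,M)^v_j)$ are carried over unchanged. Since $\widetilde\cG=\sigma(\cG,(\xi_s)_{s\ge0},(\zeta^u,u<\dag),(r^u:u<\dag))$ records precisely the spine trajectory $\widetilde Y$, the spine nodes, and the data $(\zeta^v,r^v)$ for all $v<\xi_t$, it follows that conditionally on $\widetilde\cG$ the immigrant subtrees $\{(\tau,M)^v_j:v<\xi_t,\ vj\in O_v\}$ are mutually independent, and $(\tau,M)^v_j$ is a $(Y,\beta,\psi)$-branching Hunt process issued from a single particle at the ($\widetilde\cG$-measurable) location $\widetilde Y_{\zeta^v}$ and time-shifted by $\zeta^v$. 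Hence $\widetilde{\mathbf Q}_x[\langle\phi,X^{vj}_{t-\zeta^v}\rangle\mid\widetilde\cG]=g(\widetilde Y_{\zeta^v},t-\zeta^v)$, where $g(y,s):=\mathbf P_y[\langle\phi,X_s\rangle]$. By \eqref{expX}, $g(y,s)=\Pi_y[e_{(1-A)\beta}(s)\phi(Y_s)]=P^{(1-A)\beta}_s\phi(y)$ (for unbounded $\phi$ this follows from \eqref{expX} by monotone convergence, the right-hand side being finite by Assumption \ref{assume0}), and by the invariance \eqref{invar0} this equals $e^{\lambda_1 s}\phi(y)$. Since $|O_v|=r^v-1$, summing over $j:\,vj\in O_v$ contributes $(r^v-1)e^{\lambda_1(t-\zeta^v)}\phi(\widetilde Y_{\zeta^v})$ for each $v<\xi_t$; taking conditional expectations in the displayed identity and multiplying by $e^{-\lambda_1 t}$ then yields
$$
\widetilde{\mathbf Q}_x\bigl[e^{-\lambda_1 t}\langle\phi,X_t\rangle\,\big|\,\widetilde\cG\bigr]=\phi(\widetilde Y_t)e^{-\lambda_1 t}+\sum_{v<\xi_t}(r^v-1)\phi(\widetilde Y_{\zeta^v})e^{-\lambda_1\zeta^v},
$$
which is \eqref{e:spine-decom} once we recall $\phi(x)M_t(\phi)=e^{-\lambda_1 t}\langle\phi,X_t\rangle$.

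The routine parts are the first-moment evaluation and the bookkeeping of the partition of $L_t\setminus\{\xi_t\}$; the one place that needs care — the main, though mild, obstacle — is the conditional-law statement in the second paragraph: one must make rigorous that under $\widetilde{\mathbf Q}_x$, conditionally on $\widetilde\cG$, the immigrant subtrees are genuine independent copies of the original branching Hunt process. This is exactly what the product form of $\mathrm d\widetilde{\mathbf Q}_x$ is designed to supply, so the remaining work is to check that each integrand $\langle\phi,X^{vj}_{t-\zeta^v}\rangle$ depends on the spine only through the $\widetilde\cG$-measurable quantities $\zeta^v$ and $\widetilde Y_{\zeta^v}$, and then to apply Fubini for conditional expectations to the a.s.\ finite sum over $v<\xi_t$.
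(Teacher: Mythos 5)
Your proposal is correct and follows essentially the same route as the paper's proof: decompose $\langle\phi,X_t\rangle$ into the spine term plus contributions from the subtrees rooted at the off-spine children $vj\in O_v$ for $v<\xi_t$, then take $\widetilde{\mathbf Q}_x(\cdot\mid\widetilde\cG)$ using the fact that each subtree is conditionally an independent $(Y,\beta,\psi)$-branching Hunt process started at $(\zeta^v,\widetilde Y_{\zeta^v})$. The only cosmetic difference is that the paper phrases the last step by saying the normalized subtree process $M^{u,j}_t(\phi)$ is a conditional mean-one martingale, whereas you evaluate the first moment $\mathbf P_y[\langle\phi,X_s\rangle]=e^{\lambda_1 s}\phi(y)$ directly from \eqref{expX} and \eqref{invar0}; these are the same computation.
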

\begin{proof}
We first decompose the martingale $\{\phi(x)M_t(\phi), t\ge 0\}$ as
$$
\phi(x)M_t(\phi)=e^{-\lambda_1t}\phi(\widetilde Y_{t})
+e^{-\lambda_1t}\sum_{u\in L_t, u\neq\xi_{t}}\phi(Y^u_t).
$$
The individuals $\{u\in L_t, u\neq\xi_{t}\}$ can be partitioned
into subtrees created from fissions along the spines. That is, each
node $u<\xi_{t}$ in the spine $\xi$ has given birth at time
$\zeta^u$ to $r^u$ offspring among which one has been chosen as a
node of the spine while the other $r^u-1$ individuals go off
independently to
make the subtree $(\tau, M)^u_j$.  Put
$$
X^j_t=\sum_{v\in L_t, v\in(\tau, M)^u_j}\delta_{Y^v_t}(\cdot),\quad
t\ge \zeta^u.
$$
$\{X^j_t, t\ge \zeta^u\}$ is a $(Y, \beta, \psi)$-branching Hunt
process with birth time $\zeta^u$ and starting point
$\widetilde Y_{\zeta^u}$.  Then
 \begin{equation}\label{decom}\phi(x)M_t(\phi)=e^{-\lambda_1t}\phi(\widetilde
 Y_{t})+\sum_{u<\xi_{t}}
 \sum_{j:\ uj\in O_u}M_{t}^{u,j}(\phi)
  \phi(\widetilde{Y}_{\zeta^u})e^{-\lambda_1\zeta^u},
 \end{equation}
where
$$
M^{u,j}_{t}(\phi):=e^{-\lambda_1
(t-\zeta^u)}\frac{\langle\phi, X_{t-\zeta^u}^j\rangle}
{\phi(\widetilde{Y}_{\zeta^u})}.
$$
By definition \eqref{def-tildeQ},
conditional on $\widetilde\cG$,  $uj\in O_v$ evolve as independent
subtrees determined by the probability measure
$\mathbf P_{\widetilde Y_{\zeta^u}}$ shifted to $\zeta^u$,
the time of creation. Therefore, conditional on $\widetilde\cG$,
$\{M^{u,j}_{t}(\phi), t\ge 0\}$ is a $\widetilde {\mathbf Q}_x$-martingale on the subtree
$(\tau, M)^u_j$, and therefore
$$
\widetilde {\mathbf Q}_x(M^{u,j}_{t}(\phi)|\widetilde\cG)=1.
$$
Thus taking $\widetilde {\mathbf Q}_x$ conditional expectation of \eqref{decom}
gives
$$
\widetilde {\mathbf Q}_x\left[\phi(x)M_t(\phi)\Big|\widetilde{\cG}\right]
 = \phi(\widetilde Y_t)e^{-\lambda_1t}+
 \sum_{u<\xi_{t}}(r^u-1)
 \phi(\widetilde Y_{\zeta^u})e^{-\lambda_1\zeta^u},
$$
which completes the proof.\qed
\end{proof}

\begin{thm}\label{spine-time-t}
For any $u\in\Gamma$, it holds that
$$\widetilde {\mathbf Q}_x(\xi_t=u|{\cal F}_t)=
{\mathbf 1}_{\{u\in L_t\}} \frac{\phi(Y^u_t)}{\langle\phi, X_t\rangle}.
$$
\end{thm}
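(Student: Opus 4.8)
The plan is to verify the identity by testing it against an arbitrary bounded $\cF_t$-measurable functional $g$; everything then reduces to the explicit form of $\widetilde\eta_t(\phi)$ in \eqref{def-eta2}, to Lemma~\ref{proj-M}, and to Theorem~\ref{l:Qx}. Note first that for a genuine node $u\in\Gamma$ the event $\{\xi_t=u\}$ forces $u\in L_t$, since the spine always sits in $L_t\cup\{\dag\}$; hence ${\mathbf 1}_{\{\xi_t=u\}}={\mathbf 1}_{\{\xi_t=u\}}{\mathbf 1}_{\{u\in L_t\}}$ and both sides of the asserted formula vanish off $\{u\in L_t\}$, where moreover $\langle\phi,X_t\rangle>0$ (as $\phi>0$) so all the divisions below are legitimate.

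Using the change of measure \eqref{def-tildeQ} and then \eqref{def-eta2}, and observing that on $\{\xi_t=u\}\cap\{u\in L_t\}$ one has $\widetilde Y_t=Y^u_t$ and therefore $\widetilde\eta_t(\phi)=\prod_{v<u}r^v\,\phi(Y^u_t)\phi(x)^{-1}e^{-\lambda_1 t}$, which is $\cF_t$-measurable, I would write
$$\widetilde{\mathbf Q}_x\big[g\,{\mathbf 1}_{\{\xi_t=u\}}\big]
=\widetilde{\mathbf P}_x\big[\widetilde\eta_t(\phi)\,g\,{\mathbf 1}_{\{\xi_t=u\}}\big]
=\widetilde{\mathbf P}_x\Big[{\mathbf 1}_{\{u\in L_t\}}\,g\,\prod_{v<u}r^v\,\frac{\phi(Y^u_t)}{\phi(x)}e^{-\lambda_1 t}\,{\mathbf 1}_{\{\xi_t=u\}}\Big].$$
Conditioning on $\cF_t$ and pulling out the $\cF_t$-measurable factors, the only ingredient needed is $\widetilde{\mathbf P}_x\big({\mathbf 1}_{\{\xi_t=u\}}\mid\cF_t\big)={\mathbf 1}_{\{u\in L_t\}}\prod_{v<u}\frac1{r^v}$ — the probability of routing the spine through $u$, which is exactly the fact used in the proof of Lemma~\ref{proj-M}. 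The products $\prod_{v<u}r^v$ and $\prod_{v<u}\frac1{r^v}$ cancel (all $r^v\ge1$ for $v<u$ on $\{u\in L_t\}$), and since the remaining integrand is $\cF_t$-measurable and $\widetilde{\mathbf P}_x|_{\cF}=\mathbf P_x$ we obtain
$$\widetilde{\mathbf Q}_x\big[g\,{\mathbf 1}_{\{\xi_t=u\}}\big]
=\mathbf P_x\Big[g\,{\mathbf 1}_{\{u\in L_t\}}\,\frac{\phi(Y^u_t)}{\phi(x)}e^{-\lambda_1 t}\Big].$$

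To finish, I would note that on $\{u\in L_t\}$ the definition of $M_t(\phi)$ with initial measure $\delta_x$ gives $\phi(x)^{-1}e^{-\lambda_1 t}=M_t(\phi)\langle\phi,X_t\rangle^{-1}$. Substituting this, then using Theorem~\ref{l:Qx} (i.e. $\mathrm d\mathbf Q_x/\mathrm d\mathbf P_x|_{\cF_t}=M_t(\phi)$) together with $\mathbf Q_x=\widetilde{\mathbf Q}_x|_{\cF}$, yields
$$\widetilde{\mathbf Q}_x\big[g\,{\mathbf 1}_{\{\xi_t=u\}}\big]
=\mathbf P_x\Big[g\,{\mathbf 1}_{\{u\in L_t\}}\frac{\phi(Y^u_t)}{\langle\phi,X_t\rangle}\,M_t(\phi)\Big]
=\widetilde{\mathbf Q}_x\Big[g\,{\mathbf 1}_{\{u\in L_t\}}\frac{\phi(Y^u_t)}{\langle\phi,X_t\rangle}\Big].$$
Since $g$ was an arbitrary bounded $\cF_t$-measurable functional and ${\mathbf 1}_{\{u\in L_t\}}\phi(Y^u_t)/\langle\phi,X_t\rangle$ is $\cF_t$-measurable, this identifies $\widetilde{\mathbf Q}_x(\xi_t=u\mid\cF_t)$ with ${\mathbf 1}_{\{u\in L_t\}}\phi(Y^u_t)/\langle\phi,X_t\rangle$, as claimed. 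The same computation can be packaged more compactly via the Bayes formula $\widetilde{\mathbf Q}_x(\,\cdot\mid\cF_t)=\widetilde{\mathbf P}_x(\widetilde\eta_t(\phi)\,\cdot\mid\cF_t)/M_t(\phi)$, invoking Lemma~\ref{proj-M} for the denominator.

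I do not expect a genuine obstacle here: the statement is essentially a bookkeeping consequence of \eqref{def-eta2}, Lemma~\ref{proj-M} and Theorem~\ref{l:Qx}. The only points that need care are confining the argument to $\{u\in L_t\}$ (where $\widetilde Y_t=Y^u_t$ and $\langle\phi,X_t\rangle>0$) and keeping track of which $\sigma$-field each factor is measurable with respect to when one conditions on $\cF_t$.
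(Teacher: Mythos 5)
Your proof is correct and follows essentially the same route as the paper's: unwind $\widetilde{\mathbf Q}_x$ via $\widetilde\eta_t(\phi)$ from \eqref{def-eta2}, use the conditional routing probability $\widetilde{\mathbf P}_x({\mathbf 1}_{\{\xi_t=u\}}\mid\cF_t)={\mathbf 1}_{\{u\in L_t\}}\prod_{v<u}r_v^{-1}$ (equivalently \eqref{decom-P}) to cancel the $\prod r^v$ factors, drop to $\mathbf P_x$, and then convert back to $\mathbf Q_x$ via Theorem~\ref{l:Qx}. The only cosmetic difference is that you integrate against an arbitrary bounded $\cF_t$-test function $g$ whereas the paper integrates over a set $B\in\cF_t$; these are interchangeable.
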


\begin{proof}
It suffice to show that, for any $B\in{\cal F}_t$,
$$
\int_B{\mathbf 1}_{\{\xi_t=u\}}\mathrm d\widetilde {\mathbf Q}_x=\int_B{\mathbf 1}_{\{u\in L_t\}} \frac{\phi(Y^u_t)}{\langle\phi, X_t\rangle}\mathrm d\widetilde {\mathbf Q}_x.
$$
By definition \eqref{def-tildeQ},
\begin{align*}
\int_B{\mathbf 1}_{\{\xi_t=u\}}\mathrm d\widetilde {\mathbf Q}_x=&\int_B{\mathbf 1}_{\{\xi_t=u\}}{\mathbf 1}_{\{\xi_t\in L_t\}}\prod_{v<\xi_{t}}r^v\frac{\phi(\widetilde
Y_{t})}{\phi(x)}e^{-\lambda_1 t}\mathrm d \widetilde {\mathbf P}_x\\
=&\int_B{\mathbf 1}_{\{\xi_t=u\}}{\mathbf 1}_{\{u\in L_t\}}\prod_{v<u}r^v\frac{\phi(Y^u_t)}{\phi(x)}e^{-\lambda_1 t}\mathrm d \widetilde {\mathbf P}_x.
\end{align*}
By \eqref{decom-P},
$$
\int_B{\mathbf 1}_{\{\xi_t=u\}}\mathrm d\widetilde {\mathbf Q}_x=\int_B{\mathbf 1}_{\{u\in L_t\}}\frac{\phi(Y^u_t)}{\phi(x)}e^{-\lambda_1 t}\mathrm d \mathbf P_x.
$$
It follows from Theorem \ref{l:Qx} that for any  $A\in {\cal F}_t$,
$$
\mathbf P_x(A\cap (M_t(\phi)>0))=\mathbf P_x\left(\frac{M_t(\phi)}{M_t(\phi)}, A\cap (M_t(\phi)>0)\right)=\mathbf Q_x\left(\frac{1}{M_t(\phi)},A\right).
$$
Since $\{u\in L_t\}\subset(M_t(\phi)>0)$, we have
$$
\int_B{\mathbf 1}_{\{\xi_t=u\}}\mathrm d\widetilde {\mathbf Q}_x=\int_B{\mathbf 1}_{\{u\in L_t\}}\frac{\phi(Y^u_t)}{\langle\phi, X_t\rangle}\mathrm d \mathbf Q_x.
$$
The proof is complete.
\end{proof}
\bigskip

As consequences of the result above, we have the following

\begin{corollary}\label{special-f}
If
\begin{equation*}
 f= \sum_{u\in L_t}f^u(\tau, M){\mathbf 1}_{\{\xi_t=u\}}
\end{equation*}
with $f^u\in\mathcal{F}_t$,  then
$$
\widetilde {\mathbf Q}_x(f|{\cal F}_t)=\sum_{u\in L_t}f_u\frac{\phi(Y^u_t)}{\langle\phi, X_t\rangle}\quad\mbox{on } L_t\neq \emptyset.
$$
\end{corollary}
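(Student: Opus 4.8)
The plan is to deduce this directly from Theorem \ref{spine-time-t} together with the linearity of conditional expectation, so the argument amounts to careful bookkeeping. First I would rewrite the given $f$ as a sum over the fixed countable index set $\Gamma$ rather than over the random set $L_t$: setting $g^u:=f^u(\tau,M)\,{\mathbf 1}_{\{u\in L_t\}}$ for $u\in\Gamma$, one has $g^u\in\mathcal{F}_t$, because $f^u\in\mathcal{F}_t$ and $\{u\in L_t\}=\{b^u\le t<\zeta^u\}\in\mathcal{F}_t$, and
$$
f=\sum_{u\in L_t}f^u(\tau,M)\,{\mathbf 1}_{\{\xi_t=u\}}=\sum_{u\in\Gamma}g^u\,{\mathbf 1}_{\{\xi_t=u\}}.
$$
For each realization $(\tau,M,\xi)$ and each fixed $t$ at most one term in the last sum is nonzero, namely $u=\xi_t$ in case $\xi_t\in L_t\subset\Gamma$, so there is no convergence issue and, for bounded (or $\widetilde{\mathbf Q}_x$-integrable) $f$, one may take $\widetilde{\mathbf Q}_x(\,\cdot\mid\mathcal{F}_t)$ term by term.

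Next I would pull the $\mathcal{F}_t$-measurable factor $g^u$ out of each conditional expectation and invoke Theorem \ref{spine-time-t}, which gives $\widetilde{\mathbf Q}_x({\mathbf 1}_{\{\xi_t=u\}}\mid\mathcal{F}_t)={\mathbf 1}_{\{u\in L_t\}}\phi(Y^u_t)/\langle\phi,X_t\rangle$ for every $u\in\Gamma$. Hence
$$
\widetilde{\mathbf Q}_x(f\mid\mathcal{F}_t)=\sum_{u\in\Gamma}g^u\,{\mathbf 1}_{\{u\in L_t\}}\frac{\phi(Y^u_t)}{\langle\phi,X_t\rangle}=\sum_{u\in\Gamma}f^u(\tau,M)\,{\mathbf 1}_{\{u\in L_t\}}\frac{\phi(Y^u_t)}{\langle\phi,X_t\rangle}=\sum_{u\in L_t}f^u(\tau,M)\frac{\phi(Y^u_t)}{\langle\phi,X_t\rangle},
$$
since $g^u{\mathbf 1}_{\{u\in L_t\}}=f^u{\mathbf 1}_{\{u\in L_t\}}$ and the surviving indices are exactly those with $u\in L_t$. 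This is the claimed identity.

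Finally I would record why the statement is restricted to $\{L_t\ne\emptyset\}$: there $\langle\phi,X_t\rangle>0$ because $\phi$ is strictly positive, so the right-hand side is well defined; and in any case the change of measure to $\widetilde{\mathbf Q}_x$ keeps the spine alive, $\widetilde{\mathbf Q}_x(\xi_t\in L_t,\ \forall t\ge 0)=1$, so $\widetilde{\mathbf Q}_x(L_t=\emptyset)=0$ and nothing is lost. The only genuinely delicate point is the passage from the random index set $L_t$ to the fixed set $\Gamma$ and the termwise application of conditional expectation, but this is harmless precisely because the relevant sum has a single nonzero term for each realization; I do not expect any substantial obstacle.
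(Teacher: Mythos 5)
Your proof is correct and is essentially the argument the paper intends: the paper states Corollary~\ref{special-f} with no explicit proof, simply as a consequence of Theorem~\ref{spine-time-t}, and the only content is the bookkeeping you carry out — passing from the random index set $L_t$ to the fixed set $\Gamma$, pulling the $\mathcal{F}_t$-measurable factors out of the conditional expectation term by term (harmless since at most one term is nonzero per realization), and invoking Theorem~\ref{spine-time-t}.
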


\begin{corollary}
If $g$ is a  Borel function on $E$ then
$$
\langle g\phi, X_t\rangle=\widetilde {\mathbf Q}_x(g(\widetilde Y_t)|{\cal F}_t)\langle\phi, X_t\rangle.$$
\end{corollary}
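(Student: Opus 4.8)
The plan is to derive this corollary directly from Theorem~\ref{spine-time-t} by choosing the function $g$ appropriately and exploiting the decomposition of $\widetilde{\mathcal{F}}_t$-measurable functions. Specifically, I would apply Corollary~\ref{special-f} with the particular choice $f^u(\tau,M) = g(Y^u_t)$ for each $u \in L_t$, so that
$$
f = \sum_{u\in L_t} g(Y^u_t)\,{\mathbf 1}_{\{\xi_t=u\}}.
$$
This $f$ is genuinely $\widetilde{\mathcal{F}}_t$-measurable since it depends on the spine location $\xi_t$ and on the tree up to time $t$; and each $f^u = g(Y^u_t) \in \mathcal{F}_t$. Note that on $\{\xi_t = u\}$ we have $\widetilde Y_t = Y^u_t$, so in fact $f = g(\widetilde Y_t)$ on $\{L_t \ne \emptyset\}$ (which under $\mathbf Q_x$ holds a.s., since $\widetilde{\mathbf Q}_x(\xi_t\in L_t)=1$).

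Applying Corollary~\ref{special-f} then gives immediately
$$
\widetilde{\mathbf Q}_x\big(g(\widetilde Y_t)\,\big|\,\mathcal{F}_t\big)
= \sum_{u\in L_t} g(Y^u_t)\,\frac{\phi(Y^u_t)}{\langle\phi, X_t\rangle}
= \frac{1}{\langle\phi, X_t\rangle}\sum_{u\in L_t} g(Y^u_t)\phi(Y^u_t)
= \frac{\langle g\phi, X_t\rangle}{\langle\phi, X_t\rangle}
$$
on $\{L_t \ne \emptyset\}$, where in the last step I recognize the sum as the integral of the function $g\phi$ against the point measure $X_t = \sum_{u\in L_t}\delta_{Y^u_t}$. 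Rearranging yields $\langle g\phi, X_t\rangle = \widetilde{\mathbf Q}_x(g(\widetilde Y_t)|\mathcal{F}_t)\,\langle\phi, X_t\rangle$, which is the claim. On the complementary event $\{L_t = \emptyset\}$ both sides vanish ($X_t$ is the zero measure), so the identity holds $\widetilde{\mathbf Q}_x$-a.s.

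The only point requiring a modicum of care — and the closest thing to an obstacle — is the division by $\langle\phi, X_t\rangle$: one must check that this is $\widetilde{\mathbf Q}_x$-a.s. positive, i.e. that $\widetilde{\mathbf Q}_x(\langle\phi,X_t\rangle > 0) = 1$. This follows because, as established right after \eqref{def-tildeQ}, $\widetilde{\mathbf Q}_x(\xi_t \in L_t) = 1$, and on $\{\xi_t \in L_t\}$ the particle $\xi_t$ contributes $\phi(\widetilde Y_t) > 0$ to $\langle\phi, X_t\rangle$ since $\phi$ is strictly positive by Assumption~\ref{assume0}. With that in hand, everything else is the bookkeeping above, and I would state the result as holding $\widetilde{\mathbf Q}_x$-almost surely (equivalently, on $\{L_t\ne\emptyset\}$), mirroring the phrasing of Corollary~\ref{special-f}.
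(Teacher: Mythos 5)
Your proof is correct and follows essentially the same route as the paper: write $g(\widetilde Y_t)=\sum_{u\in L_t}g(Y^u_t)\mathbf 1_{\{\xi_t=u\}}$ and apply Corollary~\ref{special-f}. The extra care you take about the event $\{L_t=\emptyset\}$ and the $\widetilde{\mathbf Q}_x$-a.s.\ positivity of $\langle\phi,X_t\rangle$ is a nice touch that the paper leaves implicit.
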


\begin{proof}
Writing $g(\widetilde Y_t)= \sum_{u\in L_t}g(Y^u_t){\mathbf 1}_{\{\xi_t=u\}}$ and applying Corollary \ref{special-f}, we immediately get the desired conclusion.
\end{proof}

\section{Applications}

\subsection{$L\log L$ criterion for supercritical branching Hunt processes}\label{supercritical}
In this subsection, we will use the spine decomposition to prove the $L\log L$ theorem for branching Hunt processes without assuming that each individual has at least one child.

Let $\{\widehat P_t, t\ge 0\}$ be the dual semigroup of $\{P_t, t\ge 0\}$ on
$L^2(E, m)$, that is
$$
\int_Ef(x)P_tg(x)m(\mathrm dx)=\int_Eg(x)\widehat P_tf(x)m(\mathrm dx),\quad f,g\in
L^2(E,m).
$$
We will use ${\bf A}$ and $\widehat{\bf A}$ to denote the generators of
the semigroups $\{P_t\}$ and $\{\widehat P_t\}$ on $L^2(E, m)$
respectively.
In this subsection, we will assume the following

\begin{assumption}\label{assume1}
(i) There exists a family of continuous strictly positive functions
$\{p(t,\cdot,\cdot); t>0\}$ on $E\times E$ such that for any
$(t,x)\in (0,\infty)\times E$ and $f\in\cB^+(E)$, we have
$$
P_tf(x)=\int_Ep(t, x, y)f(y)m(\mathrm dy),\quad \widehat P_tf(x)=\int_Ep(t, y,
x)f(y)m(\mathrm dy).
$$
(ii) The semigroups $\{P_t\}$ and $\{\widehat P_t\}$ are
ultracontractive, that is, for any $t>0$, there exists a constant
$c_t>0$ such that
$$p(t, x, y)\le c_t\quad\mbox{ for any } (x,y)\in E\times E.$$
\end{assumption}

Let $\{\widehat P^{(1-A)\beta}_t,t\ge 0\}$ be the dual semigroup of
$\{P^{(1-A)\beta}_t,t\ge 0\}$ on $L^2(E, m)$.
Under Assumption \ref{assume1}, we can easily show that the
semigroups $\{P^{(1-A)\beta}_t\}$ and $\{\widehat
P^{(1-A)\beta}_t\}$ are strongly continuous on $L^2(E, m)$. Moreover,
there exists a family of continuous strictly positive functions
$\{p^{(1-A)\beta}(t,\cdot,\cdot); t>0\}$ on $E\times E$ such that for any
$(t,x)\in (0,\infty)\times E$ and $f\in\cB^+(E)$, we have
$$
P^{(1-A)\beta}_tf(x) =\int_E p^{(1-A)\beta}(t,x,y)f(y)m(\mathrm dy),\quad
\widehat P^{(1-A)\beta}_tf(x) =\int_E p^{(1-A)\beta}(t,y,x)f(y)m(\mathrm dy).
$$
The generators of $\{P^{(1-A)\beta}_t\}$ and $\{\widehat
P^{(1-A)\beta}_t\}$ can be formally written as $\A+(A-1)\beta$ and
$\widehat{\A}+(A-1)\beta$ respectively.

Let $\sigma(\A+(A-1)\beta)$ and $\sigma(\widehat{\A}+(A-1)\beta)$
denote the spectra of the operators $\A+(A-1)\beta$ and
$\widehat{\A}+(A-1)\beta$, respectively. It follows from Jentzch's
Theorem (Theorem V.6.6 on page 333 of \cite{Sc} ) and the strong
continuity of $\{P^{(1-A)\beta}_t\}$ and $\{\widehat
P^{(1-A)\beta}_t\}$ that the common value $\lambda_1:= \sup {\rm
Re}(\sigma(\A+(A-1)\beta))= \sup {\rm
Re}(\sigma(\widehat{\A}+(A-1)\beta))$ is an eigenvalue of
multiplicity 1 for both $\A+(A-1)\beta$ and
 $\widehat{\A}+(A-1)\beta$, and that an eigenfunction $\phi$ of
$\A+(A-1)\beta$ associated with $\lambda_1$ can be chosen to be
strictly positive a.e. on $E$ and an eigenfunction
$\widehat{\phi}$ of $\widehat{\A}+(A-1)\beta$ associated with
$\lambda_1$ can be chosen to be strictly positive a.e. on $E$.
By \cite[Proposition 2.3]{KS1} we know that $\phi$ and
$\widehat{\phi}$ are strictly positive and continuous on $E$. We
choose $\phi$ and $\widehat{\phi}$ so that
$\int_E\phi^2(x)m(\mathrm dx)=\int_{E}\phi(x)\widehat{\phi}(x)m(\mathrm dx)=1$.
Then
\begin{equation}\label{invar}
\phi(x)=e^{-\lambda_1t}P^{(1-A)\beta}_t\phi(x),\quad
\widehat\phi(x)=e^{-\lambda_1t}\widehat
 P^{(1-A)\beta}_t\widehat{\phi}(x),\quad x\in E.
\end{equation}

Therefore Assumption \ref{assume1} implies Assumption  \ref{assume0}.  We can define $\Pi^\phi_x, x\in E,$ by a martingale change of measure, see \eqref{change-Pi}.
Then $\{Y,\ \Pi_x^\phi\}$ is a conservative Markov process,
and $\phi\widehat{\phi}$ is
the unique invariant probability density for the semigroup $P^{(1-A)\beta}_t$,
that is, for any $f\in \cB^+(E)$ and $t\ge 0$,
$$
\int_E\phi(x)\widehat{\phi}(x)P^{(1-A)\beta}_tf(x)m(\mathrm dx)=\int_Ef(x)\phi(x)\widehat{\phi}(x)m(\mathrm dx).
$$
Let $p^\phi(t, x,y)$ be the transition density of $Y$ in $E$ under
$\Pi^\phi_x$. Then
$$
p^\phi(t,x,y)=\frac{e^{-\lambda_1t}}{\phi(x)}\ p^{(1-A)\beta}(t, x,
y) \phi(y).
$$

In this subsection, we also assume the following

\begin{assumption}\label{assume2}
The semigroups $\{P^{(1-A)\beta}_t\}$ and $\{\widehat
P^{(1-A)\beta}_t\}$ are  intrinsic ultracontractive, that is, for
any $t > 0$ there exists a constant $c_t$ such that
$$
p^{(1-A)\beta}(t, x, y)\le c_t\phi(x)\widehat{\phi}(y), \quad x, y
\in E.
$$
\end{assumption}

It follows from \cite[Theorem 2.8]{KS1} that
\begin{equation}\label{IU-0}
\left|\frac{e^{-\lambda_1t}p^{(1-A)\beta}(t, x,y)} {\phi(x)
\widehat\phi(y)}-1\right|\le c\,e^{-\nu t},\quad x\in E,
\end{equation}
for some positive constants $c$ and $\nu$, which is equivalent to
\begin{equation}\label{IU}
\sup_{x\in E}\left|\frac{p^\phi(t, x,y)}{\phi(y) \widehat\phi(y)}-
1\right|\le c\,e^{-\nu t}.
\end{equation}
Thus for any $f\in\cB^+_b(E)$ we have
$$
\sup_{x\in E}\left|\int_E p^\phi(t, x, y)f(y)m(\mathrm dy) - \int_E\phi(y)
\widehat\phi(y)f(y)m(\mathrm dy)\right|\le c\,e^{-\nu t}\int_E\phi(y)
\widehat\phi(y)f(y)m(\mathrm dy).
$$
Consequently we have
\begin{equation}\label{u-convergent'}
\lim_{t\to\infty}\displaystyle\frac{\int_E p^\phi(t, x, y)
f(y)m(\mathrm dy)}{\int_E\phi(y) \widehat\phi(y)f(y)m(\mathrm dy)}=1,\quad \mbox{
uniformly for } f\in\cB^+_b(E)\mbox{ and }x\in E.
\end{equation}

We also assume that

\begin{assumption}\label{assume3}$\lambda_1>0$.\end{assumption}

The above assumption says that the branching
Hunt process is supercritical.
There  are many examples of Hunt processes
satisfying Assumptions \ref{assume1} and \ref{assume2}, see \cite[Remark 1.4]{LRS}.

The purpose of this subsection is to extend the probabilistic proof
of the Kesten-Stigum $L\log L$ theorem to branching Hunt processes
without assuming that each individual has at least one child.
Let
\beq\label{def-l}l(x)=\sum_{k=2}^\infty k\phi(x)\log^+( k\phi(x))\,
p_k(x),\quad x\in E.
\eeq
The main result of this subsection can be stated as follows.

\begin{thm}\label{maintheorem} Suppose that
$\{X_t; t\ge 0\}$ is a $(Y,\beta, \psi)$-branching Hunt process and
that Assumptions \ref{assume1}, \ref{assume2} and \ref{assume3} are satisfied.
Then $M_\infty(\phi)$ is non-degenerate under $\mathbf P_\mu$ for any
nonzero measure $\mu\in {\bf M}_p(E)$ if and only if
 \beq\label{LlogL-BH}
\int_E\widehat{\phi}(x)\beta(x)l(x)m(\mathrm dx)<\infty,
 \eeq
where $l$ is defined in \eqref{def-l}.
\end{thm}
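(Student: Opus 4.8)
The plan is to prove the Kesten--Stigum $L\log L$ dichotomy by combining the spine decomposition established in Section \ref{Spinedecom} with a measure-theoretic $0$--$1$ argument. Since $\{M_t(\phi),t\ge 0\}$ is a non-negative $\mathbf P_x$-martingale with $\mathbf P_x[M_t(\phi)]=1$, a classical lemma (going back to the Galton--Watson setting) says that exactly one of the following holds: either $M_\infty(\phi)$ has mean one and $\{M_t(\phi)\}$ is uniformly integrable, in which case $M_\infty(\phi)$ is non-degenerate; or $M_\infty(\phi)=0$, $\mathbf P_x$-a.s. The standard way to distinguish the two is to study $\widetilde {\mathbf Q}_x$, the measure change by $M_t(\phi)$ constructed in Theorem \ref{l:Qx}: by a Radon--Nikodym / Lebesgue decomposition argument, $M_\infty(\phi)$ is non-degenerate under $\mathbf P_x$ if and only if $\widetilde {\mathbf Q}_x(M_\infty(\phi)<\infty)=1$, and it is degenerate iff $\widetilde {\mathbf Q}_x(M_\infty(\phi)=\infty)=1$; the latter event is $\widetilde {\mathbf Q}_x$-trivial by a tail/Kolmogorov-type argument on the spine. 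So the whole problem reduces to showing that
\[
\widetilde {\mathbf Q}_x\bigl(M_\infty(\phi)<\infty\bigr)=1 \iff \int_E\widehat\phi(x)\beta(x)l(x)\,m(\mathrm dx)<\infty.
\]

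The next step is to invoke the spine decomposition of Theorem \ref{t:spine-decom}: under $\widetilde {\mathbf Q}_x$,
\[
\widetilde {\mathbf Q}_x\bigl[\phi(x)M_t(\phi)\bigm|\widetilde\cG\bigr]=\phi(\widetilde Y_t)e^{-\lambda_1 t}+\sum_{u<\xi_t}(r^u-1)\phi(\widetilde Y_{\zeta^u})e^{-\lambda_1\zeta^u}.
\]
Because $\lambda_1>0$ (Assumption \ref{assume3}) and $\phi,\widehat\phi$ are bounded above and bounded away from $0$ on compacts with the intrinsic ultracontractivity estimate \eqref{IU} forcing the ergodic behaviour \eqref{u-convergent'}, the first term $\phi(\widetilde Y_t)e^{-\lambda_1 t}\to 0$. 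So finiteness of $M_\infty(\phi)$ is governed entirely by the sum $\sum_{u<\xi_\infty}(r^u-1)\phi(\widetilde Y_{\zeta^u})e^{-\lambda_1\zeta^u}$ along the spine. I would first argue that $M_t(\phi)$ itself (not just its $\widetilde\cG$-conditional mean) converges $\widetilde{\mathbf Q}_x$-a.s. to a finite limit iff this spine series converges, using that conditional on $\widetilde\cG$ the subtree martingales $M^{u,j}_t(\phi)$ are non-negative $\widetilde{\mathbf Q}_x$-martingales with mean one; a Fatou/monotone-convergence plus conditional Borel--Cantelli argument upgrades a.s. convergence of the conditional means to a.s. finiteness of $M_\infty(\phi)$ (this is the place where one needs a careful two-sided bound, along the lines of the argument in \cite{LRS} or \cite{L}).

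The remaining and most technical step is a Borel--Cantelli analysis of the spine series $\Sigma:=\sum_{u<\xi_\infty}(r^u-1)\phi(\widetilde Y_{\zeta^u})e^{-\lambda_1\zeta^u}$ under $\widetilde{\mathbf Q}_x$. Under $\widetilde{\mathbf Q}_x$ the spine $\widetilde Y$ runs with law $\Pi^\phi_x$ (ergodic with stationary density $\phi\widehat\phi$), fission times along the spine form a Poisson random measure with intensity $(A\beta)(\widetilde Y_t)\,\mathrm dt$, and at fission at position $y$ the family size $r$ has the size-biased law $\dot p_k(y)=kp_k(y)/A(y)$. One shows $\Sigma<\infty$ a.s. iff its increments are summable, which by a conditional Borel--Cantelli lemma (since $e^{-\lambda_1\zeta^u}\to 0$ geometrically in the birth index) is equivalent to
\[
\widetilde{\mathbf Q}_x\Bigl[\log^+\!\bigl(r\,\phi(\widetilde Y_\zeta)\bigr)\Bigr]<\infty
\]
for a typical fission point --- and averaging over the Poisson fission measure and the stationary distribution $\phi\widehat\phi$ turns this into $\int_E (A\beta)(y)\phi(y)\widehat\phi(y)\sum_k \dot p_k(y)\log^+(k\phi(y))\,m(\mathrm dy)<\infty$, i.e. $\int_E \widehat\phi(y)\beta(y)\sum_k k p_k(y)\log^+(k\phi(y))\,m(\mathrm dy)<\infty$, which is exactly $\int_E\widehat\phi(x)\beta(x)l(x)\,m(\mathrm dx)<\infty$. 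The boundedness of $\beta$ and $A$, together with the uniform two-sided control \eqref{IU}--\eqref{u-convergent'} on $p^\phi$ (which lets one replace time-integrals of functionals of $\widetilde Y$ by their stationary averages up to uniformly controlled errors), are what make the conditional Borel--Cantelli estimates go through; comparing the $\log^+(r\phi(\widetilde Y_\zeta) e^{-\lambda_1\zeta})$ terms to $\log^+(r\phi(\widetilde Y_\zeta))$ (the $e^{-\lambda_1\zeta}$ factor only helps) and handling the contribution of $r=1$ (which does not enter $l$ since $r-1=0$) are routine. The main obstacle is precisely this last step: the careful bookkeeping in the conditional Borel--Cantelli argument --- making sure the exceptional null sets are genuinely null under $\widetilde{\mathbf Q}_x$ rather than merely along the conditional means, and that the possibility $r^v=0$ (spine continues as $\dag$ with $\widetilde{\mathbf Q}_x$-probability zero, by the calculation after \eqref{def-tildeQ}) causes no trouble --- and then converting the a.s. statement about $\Sigma$ back into the dichotomy for $M_\infty(\phi)$ under the original measure $\mathbf P_\mu$ via the Lebesgue-decomposition correspondence.
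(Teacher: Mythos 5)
Your proposal follows the same route as the paper: apply Lemma \ref{Durrett2} (Durrett's Lebesgue-decomposition dichotomy) to reduce everything to $\widetilde{\mathbf Q}_x$-a.s.\ finiteness of $M_\infty(\phi)$, use the spine decomposition of Theorem \ref{t:spine-decom}, and carry out a Borel--Cantelli analysis of the spine series (the paper's Lemma \ref{lemma1}, proved as in \cite[Lemma 3.2]{LRS}) under the ergodic spine dynamics with fission rate $A\beta$ and size-biased offspring. One small simplification worth noting: to upgrade $\liminf_t M_t(\phi)<\infty$ (from Fatou applied to the $\widetilde\cG$-conditional expectations) to $\lim_t M_t(\phi)<\infty$, the paper does not need a two-sided conditional Borel--Cantelli argument or a Kolmogorov $0$--$1$ law --- it simply observes that $M_t(\phi)^{-1}$ is a nonnegative $\mathbf Q_x$-supermartingale by the Radon--Nikodym relation, hence converges a.s., which pins down the limit.
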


First, we give two lemmas.  The first lemma is basically
\cite[Theorem 4.3.3]{D2}.
\begin{lemma}\label{Durrett2}
 Suppose that $\P$ and $\Q$ are two probability measures on a
 measurable  space
$(\Omega, {\cal F}_{\infty})$ with filtration $({\cal F}_t)_{t\ge 0}$,
such that for some nonnegative martingale $\{Z_t, t\ge 0\}$,
$$\frac{\mathrm d\Q}{\mathrm d\P}\Big|_{{\cal F}_t}=Z_t.$$
The limit $Z_{\infty}:=\limsup_{t\to\infty}Z_t$ therefore exists and
is finite almost surely under $\P$. Furthermore, for any $F\in{\cal
F}_{\infty}$
$$\Q(F)=\int_FZ_{\infty}\mathrm d\P+\Q(F\cap\{Z_{\infty}=\infty\}),$$
and consequently,
$$\begin{array}{rl}&(a)\quad \P(Z_{\infty}=0)=1\Longleftrightarrow
\Q(Z_{\infty}=\infty)=1\\
&(b)\quad \displaystyle\int Z_{\infty}\mathrm d\P=\displaystyle\int
Z_0\mathrm d\P\Longleftrightarrow\Q(Z_{\infty}<\infty)=1.\end{array}$$
\end{lemma}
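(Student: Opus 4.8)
The plan is to first settle the convergence claim from the martingale convergence theorem, then prove the decomposition formula $\Q(F)=\int_F Z_\infty\,\mathrm d\P+\Q(F\cap\{Z_\infty=\infty\})$ for all $F\in{\cal F}_\infty$ (this is the substance of the lemma), and finally deduce (a) and (b) by specializing $F$. I will work with a c\`adl\`ag version of $\{Z_t\}$, which is available in all our applications (where $Z_t=M_t(\phi)$); this makes level hitting times stopping times and puts $\{Z_\infty=\infty\}$ in ${\cal F}_\infty$, and, since a c\`adl\`ag path that stays bounded near $+\infty$ is bounded on all of $[0,\infty)$, it gives $\{Z_\infty<\infty\}=\{\sup_{t\ge 0}Z_t<\infty\}$. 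Note $\{Z_t\}$ is automatically a nonnegative $\P$-martingale for $({\cal F}_t)$, since $\int_A Z_s\,\mathrm d\P=\Q(A)=\int_A Z_t\,\mathrm d\P$ for $A\in{\cal F}_s$, $s\le t$. Hence $\lim_{t\to\infty}Z_t$ exists in $[0,\infty)$ $\P$-a.s., so $Z_\infty=\limsup_t Z_t<\infty$ $\P$-a.s., and by Fatou $\int_\Omega Z_\infty\,\mathrm d\P\le\liminf_t\int_\Omega Z_t\,\mathrm d\P=1$, so $F\mapsto\int_F Z_\infty\,\mathrm d\P$ is a finite measure on ${\cal F}_\infty$.

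For the key identity, fix $\lambda>0$ and set $T_\lambda:=\inf\{t\ge0:Z_t\ge\lambda\}$, so that $Z_t<\lambda$ for $t<T_\lambda$ and $\{T_\lambda>n\}=\{\sup_{s\le n}Z_s<\lambda\}\in{\cal F}_n$. Fix first $F\in{\cal F}_m$ for some $m$. For every $n\ge m$ the event $F\cap\{T_\lambda>n\}$ lies in ${\cal F}_n$, so the hypothesis gives $\Q(F\cap\{T_\lambda>n\})=\int_{F\cap\{T_\lambda>n\}}Z_n\,\mathrm d\P$. Now let $n\to\infty$. The left side decreases to $\Q(F\cap\{T_\lambda=\infty\})$ by continuity of $\Q$ from above; on the right side the integrand $Z_n\mathbf 1_{F\cap\{T_\lambda>n\}}$ is bounded by $\lambda$ and converges $\P$-a.s.\ to $Z_\infty\mathbf 1_{F\cap\{T_\lambda=\infty\}}$ (it vanishes eventually off $\{T_\lambda=\infty\}$ and equals $Z_n\to Z_\infty$ on it), so bounded convergence yields $\Q(F\cap\{T_\lambda=\infty\})=\int_{F\cap\{T_\lambda=\infty\}}Z_\infty\,\mathrm d\P$. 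Finally let $\lambda\uparrow\infty$: since $\{T_\lambda=\infty\}\uparrow\{\sup_t Z_t<\infty\}=\{Z_\infty<\infty\}$ and $Z_\infty<\infty$ $\P$-a.s., monotone convergence gives $\Q(F\cap\{Z_\infty<\infty\})=\int_F Z_\infty\,\mathrm d\P$, i.e.
\[
\Q(F)=\int_F Z_\infty\,\mathrm d\P+\Q\big(F\cap\{Z_\infty=\infty\}\big),\qquad F\in\bigcup_m{\cal F}_m .
\]
Both sides are finite measures in $F$, and $\bigcup_m{\cal F}_m$ is a $\pi$-system generating ${\cal F}_\infty$, so the monotone class (Dynkin) theorem extends the identity to all $F\in{\cal F}_\infty$.

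The deductions are then routine. Taking $F=\Omega$ gives $1=\int_\Omega Z_\infty\,\mathrm d\P+\Q(Z_\infty=\infty)$; since $\int Z_0\,\mathrm d\P=\Q(\Omega)=1$, this reads $\int Z_\infty\,\mathrm d\P=\int Z_0\,\mathrm d\P\iff\Q(Z_\infty=\infty)=0\iff\Q(Z_\infty<\infty)=1$, which is (b). For (a): if $\Q(Z_\infty=\infty)=1$ the displayed equality forces $\int Z_\infty\,\mathrm d\P=0$, hence $\P(Z_\infty=0)=1$; conversely, if $\P(Z_\infty=0)=1$ then $\int_F Z_\infty\,\mathrm d\P=0$ for all $F$, so $\Q(F)=\Q(F\cap\{Z_\infty=\infty\})$, and $F=\{Z_\infty<\infty\}$ gives $\Q(Z_\infty<\infty)=0$. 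The one genuinely delicate step is the middle paragraph: a naive limit in $\Q(F)=\int_F Z_n\,\mathrm d\P$ yields, via Fatou, only $\Q(F)\ge\int_F Z_\infty\,\mathrm d\P$, and one must locate the possible excess mass $\Q(F)-\int_F Z_\infty\,\mathrm d\P$. Truncating at level $\lambda$ and sending $n\to\infty$ \emph{before} $\lambda\to\infty$ is exactly the device that pins that mass onto $\{Z_\infty=\infty\}$; getting this order of limits right (so that bounded convergence applies with the uniform bound $\lambda$) and the measure-extension bookkeeping are the points needing care, while Steps 1 and 3 are routine. (Alternatively one may start from the Lebesgue decomposition of $\Q$ with respect to $\P$ on ${\cal F}_\infty$ and identify the density of its absolutely continuous part with $Z_\infty$ via Doob's and L\'evy's theorems, but the truncation argument is more self-contained.)
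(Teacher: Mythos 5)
Your proof is correct, but it does not follow the route the paper relies on: the paper offers no proof of this lemma at all, instead citing it as Theorem~4.3.3 of Durrett's book, and Durrett's argument is structurally different from yours. Durrett introduces the auxiliary probability measure $\rho=\tfrac12(\P+\Q)$ and the \emph{bounded} $\rho$-martingales $Y_t=\mathrm d\P|_{\mathcal F_t}/\mathrm d\rho|_{\mathcal F_t}$ and $Z'_t=\mathrm d\Q|_{\mathcal F_t}/\mathrm d\rho|_{\mathcal F_t}$ (with $Y_t+Z'_t=2$ and $Z_t=Z'_t/Y_t$); boundedness gives $\rho$-a.s.\ (hence simultaneously $\P$- and $\Q$-a.s.) convergence of $Y_t,Z'_t$, and then $\Q(F)=\int_F Z'_\infty\,\mathrm d\rho$ splits over $\{Y_\infty>0\}=\{Z_\infty<\infty\}$ and its complement by a direct computation, with no stopping times, no truncation, and no $\pi$-$\lambda$ step. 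Your argument instead works directly with $Z_t$ by stopping at $T_\lambda$, passing $n\to\infty$ by bounded convergence under the ceiling $\lambda$, then $\lambda\uparrow\infty$ by monotone convergence, and extending from $\bigcup_m\mathcal F_m$ to $\mathcal F_\infty$ via the $\pi$-$\lambda$ theorem; this is self-contained and avoids the auxiliary measure, at the price of needing a c\`adl\`ag version of $Z$ (so that $T_\lambda$ is a stopping time, $\{T_\lambda>n\}\in\mathcal F_n$, and $\{Z_\infty<\infty\}=\{\sup_tZ_t<\infty\}$) --- an extra regularity assumption not in the statement but harmless here since $Z_t=M_t(\phi)$ is c\`adl\`ag in the paper's applications. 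Durrett's device is arguably cleaner because it delivers a.s.\ convergence of $Z_t$ under \emph{both} $\P$ and $\Q$ for free and sidesteps the path-regularity discussion; yours is more elementary and makes the ``where the missing mass goes'' mechanism transparent, which is a pedagogical gain. The alternative you sketch in your final parenthetical remark (Lebesgue decomposition of $\Q$ w.r.t.\ $\P$) is in fact the closer match to the cited source.
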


Now we are going to give a lemma which is the key to the proof of
Theorem \ref{maintheorem}. To state this lemma, we need some  more
notation.
Note that
$$
\widetilde {\mathbf Q}_x(\xi_t\neq\dag, \forall t>0)=1
$$
and thus the lifetime the spine is $\infty$.
 We can select a line of descendants
$\xi=\{\xi_0=\emptyset,\ \xi_1, \xi_2,\cdots\},$ where
$\xi_{n+1}\in\tau$ is an offspring of $\xi_n\in\tau,\ n=0,1, \cdots$, such that $\xi_t=\xi_{n_t}, t\geq 0.$
Under $\widetilde{\mathbf{Q}}_x$, given $\widetilde\cG$,
$N_t:=\{\{(\zeta^{\xi_i},\ r^{\xi_i}): i=0, 1,2, \cdots,
n_t-1\}:t\geq 0\}$ is a Poisson point process with intensity
measure $(A\beta)(\widetilde Y_t)\mathrm dt\mathrm d\dot P(\widetilde Y_t )$, where for each $y\in E$, $\dot P(y)$ is the
size-biased distribution of $P(y)$.
To simplify notation, $\zeta^{\xi_i}$ and
$r^{\xi_i}$ will be denoted as $\zeta_i$ and $r_i$, respectively.

\begin{lemma}\label{lemma1}

(1) If $\int_E\widehat{\phi}(y)\beta(y)l(y)m(\mathrm dy)<\infty$, then
$$
\sum^\infty_{i=0}{e}^{-\lambda_1\zeta_i}r_i
\phi(\widetilde{Y}_{\zeta_i})<\infty, \quad \widetilde
{\mathbf Q}_x\mbox{-a.s.}
$$

 (2) If $
\int_E\widehat{\phi}(y)\beta(y)l(y)m(\mathrm dy)=\infty$,  then
$$
\limsup_{i\rightarrow\infty}e^{-\lambda_1\zeta_i}r_i
\phi( \widetilde{Y}_{\zeta_i})
=\infty,\quad \widetilde {\mathbf Q}_x\mbox{-a.s.}
$$
\end{lemma}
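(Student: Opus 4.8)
The plan is to analyze the Poisson point process $N_t = \{\{(\zeta_i, r_i)\}: t \ge 0\}$ along the spine under $\widetilde{\mathbf Q}_x$, which (given $\widetilde{\cG}$) has intensity $(A\beta)(\widetilde Y_t)\,\mathrm dt\,\mathrm d\dot P(\widetilde Y_t)$. First I would compute, for the sum $S := \sum_{i=0}^\infty e^{-\lambda_1 \zeta_i} r_i \phi(\widetilde Y_{\zeta_i})$, the conditional expectation given the spine path $\widetilde Y$. Using the intensity measure, the expected number of points contributing a jump of a given size near time $s$ is governed by $(A\beta)(\widetilde Y_s)\sum_{k\ge 1} \dot p_k(\widetilde Y_s) \cdot (\text{weight})$. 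The weight for a point at time $s$ with $r = k$ is $e^{-\lambda_1 s} k \phi(\widetilde Y_s)$, and summing against $\dot p_k(y) = k p_k(y)/A(y)$ gives $\sum_k k \cdot \frac{k p_k(y)}{A(y)} = \frac{1}{A(y)}\sum_k k^2 p_k(y)$ — which is infinite in general. So $\widetilde{\mathbf Q}_x[S \mid \widetilde Y] = \infty$ typically, and the first moment is the wrong tool. Instead I would use a truncation / Borel–Cantelli dichotomy, exactly as in the classical Lyons–Peres–Pemantle argument.

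The key step is the following dichotomy for sums of the form $\sum_i g_i$ where the $g_i = e^{-\lambda_1 \zeta_i} r_i \phi(\widetilde Y_{\zeta_i})$ arise from a Poisson point process: such a sum converges a.s. if and only if $\sum_i \widetilde{\mathbf Q}_x[g_i \wedge 1 \mid \widetilde{\cG}] < \infty$, or equivalently (by the form of the intensity) if and only if the integral
\[
\int_0^\infty (A\beta)(\widetilde Y_s)\, \Big(\sum_{k\ge 1} \dot p_k(\widetilde Y_s)\,\big[(e^{-\lambda_1 s} k\phi(\widetilde Y_s)) \wedge 1\big]\Big)\,\mathrm ds
\]
is finite. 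Using $\dot p_k(y) = k p_k(y)/A(y)$, the inner sum becomes $\frac{1}{A(y)}\sum_k k p_k(y)\,[(e^{-\lambda_1 s}k\phi(y)) \wedge 1]$; splitting at $k\phi(y) \approx e^{\lambda_1 s}$, the standard estimate $\sum_k k a_k\,[(cx_k)\wedge 1] \asymp \sum_k k x_k \mathbf 1_{x_k \le 1/c}\, c + \sum_k k a_k \mathbf 1_{x_k > 1/c}$ shows this is comparable (up to the bounded factor $\beta/A$ times constants) to $e^{-\lambda_1 s}\sum_{k: k\phi(y)\le e^{\lambda_1 s}} k^2\phi(y) p_k(y) + \sum_{k: k\phi(y) > e^{\lambda_1 s}} k p_k(y)$. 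Integrating $\mathrm ds$ and swapping sum and integral, the $s$-integral of each term produces (via elementary computation of $\int \min(e^{-\lambda_1 s}\cdot, \cdot)\,\mathrm ds$) a quantity comparable to $\phi(y)\sum_k k p_k(y)\log^+(k\phi(y)) = l(y)$ (up to constants and the factor $1/\lambda_1$, using $\lambda_1 > 0$). Thus the condition reduces to $\int_0^\infty \beta(\widetilde Y_s)\, \frac{l(\widetilde Y_s)}{A(\widetilde Y_s)}\,\mathrm ds < \infty$, or more cleanly, after absorbing the bounded $A$ into the definition of $l$ versus $\beta l$, to the finiteness of $\int_0^\infty (\beta l)(\widetilde Y_s)\,\mathrm ds$ under $\Pi_x^\phi$.

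The final step is to connect the pathwise integral condition to the stated space integral. Since the spine $\widetilde Y$ under $\widetilde{\mathbf Q}_x$ has law $\Pi_x^\phi$, which is an ergodic (by intrinsic ultracontractivity, via \eqref{IU} and \eqref{u-convergent'}) conservative Markov process with stationary density $\phi\widehat\phi$, the occupation-measure / ergodic theorem gives that $\int_0^\infty h(\widetilde Y_s)\,\mathrm ds$ is a.s. finite or a.s. infinite according to whether $\int_E h(y)\phi(y)\widehat\phi(y)\,m(\mathrm dy)$ is finite or infinite, for nonnegative $h$; applying this with $h = \beta l / \phi$ (noting $l$ already carries one factor of $\phi$, so $h\phi\widehat\phi = \widehat\phi\,\beta\,l$ gives exactly the integrand in \eqref{LlogL-BH}). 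This yields part (1). For part (2), when the integral diverges, the same Borel–Cantelli dichotomy (the divergence direction of the Borel–Cantelli lemma for independent events, since disjoint time-windows give independent Poisson contributions) forces infinitely many $i$ with $e^{-\lambda_1\zeta_i}r_i\phi(\widetilde Y_{\zeta_i}) > 1$, hence $\limsup_i e^{-\lambda_1\zeta_i}r_i\phi(\widetilde Y_{\zeta_i}) = \infty$, $\widetilde{\mathbf Q}_x$-a.s.

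I expect the main obstacle to be the careful two-sided estimate showing that the truncated-sum criterion is genuinely equivalent to the $l\log l$-type integral — in particular, handling the split between "small" jumps (where the truncation at $1$ is not active and one sums $k^2 p_k$) and "large" jumps (where it is), and verifying that the time integration converts the $k$-dependence into exactly a $\log^+(k\phi)$ factor with no loss. A secondary technical point is justifying the ergodic occupation-time dichotomy for the possibly unbounded function $\beta l/\phi$; this should follow from intrinsic ultracontractivity and the uniform convergence in \eqref{u-convergent'}, truncating $l$ at level $n$ and passing to the limit.
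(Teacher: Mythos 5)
The paper itself does not give a full proof of this lemma; it invokes the proof of \cite[Lemma~3.2]{LRS}, which proceeds via the same template you outline: a truncation (Campbell/three-series) criterion for sums over the spine's Poisson point process, an explicit evaluation of $\int_0^\infty \min(e^{-\lambda_1 s}a,1)\,\mathrm ds$ yielding the $\log^+$ factor, and a passage to the invariant measure $\phi\widehat\phi\,m$ of the spine. Your first two stages are set up correctly, and your identification of the Poisson intensity $\beta(\widetilde Y_s)\,k\,p_k(\widetilde Y_s)\,\mathrm ds$ (after the $A$'s cancel between $(A\beta)$ and $\dot p_k$) and of the $\wedge 1$ truncation criterion is exactly right.

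However, there is a genuine error in the final step. You assert an ``ergodic occupation-time dichotomy'' of the form: for nonnegative $h$, $\int_0^\infty h(\widetilde Y_s)\,\mathrm ds<\infty$ a.s.\ if and only if $\int_E h\,\phi\widehat\phi\,\mathrm dm<\infty$. This is false: for the conservative, recurrent, ergodic spine process with invariant density $\phi\widehat\phi$, the ergodic theorem forces $T^{-1}\int_0^T h(\widetilde Y_s)\,\mathrm ds\to\int_E h\,\phi\widehat\phi\,\mathrm dm$, so $\int_0^\infty h(\widetilde Y_s)\,\mathrm ds=\infty$ a.s.\ for \emph{any} nonnegative $h$ that is not $m$-a.e.\ zero, regardless of whether $\int h\phi\widehat\phi\,\mathrm dm$ is finite. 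Your reduction ``to the finiteness of $\int_0^\infty(\beta l)(\widetilde Y_s)\,\mathrm ds$'' therefore can never hold except trivially, contradicting part~(1) of the lemma. The slip occurs because the crucial $e^{-\lambda_1 s}$ decay was absorbed away at the stage where you evaluated the $s$-integral ``at fixed $y$'': the genuine criterion is the a.s.\ finiteness of $\int_0^\infty G(s,\widetilde Y_s)\,\mathrm ds$ with $G(s,y)=\beta(y)\sum_k k\,p_k(y)\bigl[(e^{-\lambda_1 s}k\phi(y))\wedge 1\bigr]$, a quantity that decays in $s$; it is not a pathwise occupation integral of a time-independent function. The correct way to close the argument is to note that under $\widetilde{\mathbf Q}_{\phi\widehat\phi m}:=\int_E\widetilde{\mathbf Q}_x\,\phi\widehat\phi\,\mathrm dm$ the spine is stationary, apply Fubini to obtain $\widetilde{\mathbf Q}_{\phi\widehat\phi m}\bigl[\int_0^\infty G(s,\widetilde Y_s)\,\mathrm ds\bigr]=\int_E\bigl(\int_0^\infty G(s,y)\,\mathrm ds\bigr)\phi\widehat\phi(y)\,m(\mathrm dy)$, observe that the inner integral is comparable to $\lambda_1^{-1}\beta(y)\bigl(A(y)+l(y)/\phi(y)\bigr)$ so that the outer integral is finite iff $\int_E\widehat\phi\,\beta\,l\,\mathrm dm<\infty$, and then transfer the a.s.\ statement from $\widetilde{\mathbf Q}_{\phi\widehat\phi m}$ to each $\widetilde{\mathbf Q}_x$ by the two-sided bounds of intrinsic ultracontractivity \eqref{IU}. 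For the divergence direction one needs the complementary second Borel--Cantelli/Poisson argument, but applied with an arbitrary threshold $K$ rather than just $K=1$: your current statement only delivers $\limsup_i e^{-\lambda_1\zeta_i}r_i\phi(\widetilde Y_{\zeta_i})\ge 1$, not $=\infty$; replacing $\log^+(k\phi)$ by $\log^+(k\phi/K)$ leaves the $L\log L$ integral infinite for every $K$, which upgrades the conclusion.
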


The proof of the above result goes along the same line as the proof of \cite[Lemma 3.2]{LRS}. We omit the details here.

{\bf Proof of Theorem \ref{maintheorem}.}\quad The proof heavily
depends on the decomposition \eqref{e:spine-decom}.

When $\int_E\widehat{\phi}(x)\beta(x)l(x)m(\mathrm dx)<\infty$, the first
conclusion of Lemma \ref{lemma1} says that
$$
\sup_{t>0}\widetilde
{\mathbf Q}_x\left[\phi(x)M_t(\phi)\Big|\widetilde{\cG}\right] \leq
\sum_{u\in\xi}r^u
\phi(\widetilde Y_{\zeta^u})
e^{-\lambda_1\zeta^u}+\|\phi\|_{\infty}<\infty.
$$
Fatou's lemma for conditional probability implies that
$\liminf_{t\to\infty}M_t(\phi)<\infty$,
$\widetilde {\mathbf Q}_x$-a.s.
The Radon-Nikodym derivative tells us that $\{M_t(\phi)^{-1}, t\ge 0\}$ is a
nonnegative supermartingale under $\mathbf Q_x$ and therefore has a finite
limit $\mathbf Q_x$-a.s. So $\lim_{t\to\infty}M_t(\phi)=M_{\infty}<\infty$,
$\mathbf Q_x$-a.s.
  Lemma \ref{Durrett2}
implies that in this case,
$$\mathbf P_x[M_\infty(\phi)]=\lim_{t\rightarrow\infty}\mathbf P_x[M_t(\phi)]=1.$$

When $\int_E\widehat{\phi}(x)\beta(x)l(x)m(\mathrm dx)=\infty$, using
the second conclusion in Lemma \ref{lemma1}, we can get under
$\widetilde {\mathbf Q}_x,$
\[\limsup_{t\rightarrow\infty}\phi(x)M_t(\phi)
\geq\limsup_{t\rightarrow\infty}
\phi(\widetilde Y_{\zeta_{n_t}})
(r_{n_t}-1)e^{-\lambda_1\zeta_{n_t}}=\infty.\]
This yields that $M_\infty(\phi)=\infty,\ \mathbf Q_x$-a.s.  Using Lemma
\ref{Durrett2} again, we get $M_\infty(\phi)=0,\ \mathbf P_x$-a.s. The
proof is finished.\qed

\begin{thm}\label{SLL} Suppose that
$\{X_t; t\ge 0\}$ is a $(Y,\beta, \psi)$-branching Hunt process and
that Assumptions \ref{assume1}, \ref{assume2} and \ref{assume3} are satisfied.
 Suppose \eqref{LlogL-BH}  holds, then there exists $\Omega_0\subset \Omega$ with
full probability (that is,
$\mathbf P_{x}(\Omega_0)= 1$
for every $x\in E$) such that, for every $\omega\in\Omega_0$ and for every
bounded Borel function $f$ on $E$ with compact support whose set of discontinuous
points has zero $m$-measure, we have
$$\lim_{t\to\infty}e^{-\lambda_1t}\langle f, X_t\rangle=M_\infty(\phi)\int_E\widehat\phi(x)f(x)m(\mathrm dx).$$
\end{thm}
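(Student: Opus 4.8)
The plan is to prove the convergence $\mathbf P_x$-a.s., using three ingredients already available — the many-to-one formula \eqref{expX}, the intrinsic ultracontractivity estimate \eqref{IU-0}, and Theorem~\ref{maintheorem} (which under \eqref{LlogL-BH} makes $M_\infty(\phi)$ non-degenerate) — and absorbing the absence of a second-moment hypothesis on the offspring law by a Kesten--Stigum-type truncation. First I would make the usual reductions. Writing $f=f^+-f^-$, we may assume $f\ge0$; since the discontinuity set of $f$ is $m$-null and $\widehat\phi$ is continuous (so $\int_E\widehat\phi g\,\mathrm dm<\infty$ for bounded compactly supported $g$), one can squeeze $f$ between continuous compactly supported $f_1\le f\le f_2$ with $\int_E\widehat\phi(f_2-f_1)\,\mathrm dm$ arbitrarily small, so it suffices to treat $f\in C_c(E)$, $f\ge0$. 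For such $f$ one has $f\le C_f\phi$ (since $\mathbf 1_{\mathrm{supp}\,f}\le\phi/\inf_{\mathrm{supp}\,f}\phi$, $\phi$ being continuous and strictly positive), hence $0\le e^{-\lambda_1 t}\langle f,X_t\rangle\le C_f\,e^{-\lambda_1 t}\langle\phi,X_t\rangle=C_f\phi(x)M_t(\phi)$; on $\{M_\infty(\phi)=0\}$ this gives $e^{-\lambda_1 t}\langle f,X_t\rangle\to0$, which is the assertion there, so the content is the behaviour on $\{M_\infty(\phi)>0\}$, a set of positive probability by Theorem~\ref{maintheorem}.

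Next I would fix a lag $r>0$ and, for $t>r$, decompose $X_t$ through the particles alive at time $t-r$: by the branching property, given $\cF_{t-r}$ the subtrees they issue are independent branching Hunt processes, so by \eqref{expX}
\[
\mathbf P_x\!\left[e^{-\lambda_1 t}\langle f,X_t\rangle\,\big|\,\cF_{t-r}\right]
= e^{-\lambda_1 t}\big\langle P^{(1-A)\beta}_r f,\,X_{t-r}\big\rangle
= e^{-\lambda_1(t-r)}\big\langle e^{-\lambda_1 r}P^{(1-A)\beta}_r f,\,X_{t-r}\big\rangle .
\]
By \eqref{IU-0} one may write $e^{-\lambda_1 r}P^{(1-A)\beta}_r f=\big(\int_E\widehat\phi f\,\mathrm dm\big)\phi+g_r$ with $|g_r|\le c\,e^{-\nu r}\big(\int_E\widehat\phi f\,\mathrm dm\big)\phi$, so the conditional expectation equals $\big(\int_E\widehat\phi f\,\mathrm dm\big)e^{-\lambda_1(t-r)}\langle\phi,X_{t-r}\rangle$ up to an additive term of modulus $\le c\,e^{-\nu r}\big(\int_E\widehat\phi f\,\mathrm dm\big)e^{-\lambda_1(t-r)}\langle\phi,X_{t-r}\rangle$. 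Since $e^{-\lambda_1 s}\langle\phi,X_s\rangle$ converges a.s. by Lemma~\ref{l:1.5}, letting $t\to\infty$ shows this ``mean term'' converges, up to $c\,e^{-\nu r}W$ with $W$ a finite random variable not depending on $r$, to $M_\infty(\phi)\int_E\widehat\phi f\,\mathrm dm$.

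It then remains to show that the centred remainder $B^r_t:=e^{-\lambda_1 t}\langle f,X_t\rangle-\mathbf P_x[e^{-\lambda_1 t}\langle f,X_t\rangle\mid\cF_{t-r}]=e^{-\lambda_1 t}\sum_{u\in L_{t-r}}\big(\langle f,X^u_r\rangle-\mathbf P_x[\langle f,X^u_r\rangle\mid\cF_{t-r}]\big)$ — with $X^u$ the subtree rooted at $u$ — tends to $0$ a.s. I would do this first along a lattice $\{n\delta\}_{n\ge1}$ (with $r$ a multiple of $\delta$). Since no second moment of the offspring law is available, I would split each $\langle f,X^u_r\rangle$ at a level growing slowly with $n$: the ``low'' (truncated) part is bounded, its conditional second moment is estimated via \eqref{expX} and \eqref{IU-0}, and the resulting Chebyshev bounds are summable in $n$ thanks to the factor $e^{-\lambda_1 n\delta}$; the ``high'' part is non-zero for only finitely many $n$ by Borel--Cantelli, and this is exactly where \eqref{LlogL-BH} is used — the relevant series is bounded along the lines of the proof of Theorem~\ref{maintheorem}, using $\langle f,X_r\rangle\le C_f\langle\phi,X_r\rangle$ to reduce to a tail bound on a $\phi$-martingale that is summable precisely under \eqref{LlogL-BH} (compare Lemma~\ref{lemma1}). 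Hence $B^r_{n\delta}\to0$ a.s.; combined with the mean term, $\limsup_n\big|e^{-\lambda_1 n\delta}\langle f,X_{n\delta}\rangle-M_\infty(\phi)\int_E\widehat\phi f\,\mathrm dm\big|\le c\,e^{-\nu r}W$ a.s., and since neither the left side nor $W$ depends on $r$, letting $r\to\infty$ gives exact convergence along $\{n\delta\}_n$ for every $\delta\in\{1/k:k\ge1\}$. The extension to arbitrary $t\to\infty$ is then a standard, if slightly technical, interpolation over the blocks $[n\delta,(n+1)\delta)$: one controls the fluctuation of $e^{-\lambda_1 t}\langle f,X_t\rangle$ there by Doob's inequality for the truncated part and Borel--Cantelli for the remainder, together with the strong continuity of $\{P^{(1-A)\beta}_t\}$ and \eqref{IU-0}. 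Finally $\Omega_0$ is obtained by intersecting the countably many almost-sure events so produced (over $r$, over $\delta$, and over a $\|\cdot\|_\infty$-dense countable family of test functions), a general $f$ being recovered via the first-step sandwiching and the bound $|\langle f-h,X_t\rangle|\le C\|f-h\|_\infty\langle\phi,X_t\rangle$.

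The main obstacle is the fluctuation step: with only \eqref{LlogL-BH} at hand (and no control on $\psi''(\cdot,1)=\sum_k k(k-1)p_k$), one cannot bound $\mathbf P_x[\langle f,X_t\rangle^2]$ directly, so the truncation level must be calibrated so that simultaneously the truncated contributions are Borel--Cantelli summable and the exceedances are controlled by \eqref{LlogL-BH} exactly as in Theorem~\ref{maintheorem}; achieving both at once — equivalently, an appropriately growing truncation of the offspring numbers along the tree — is the crux of the argument. A secondary, purely technical, point is the non-lattice interpolation sketched above.
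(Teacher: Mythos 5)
Your proposal follows the same overall strategy as the paper's, which in this case is simply a pointer: the paper ``proves'' Theorem~\ref{SLL} by remarking that, with the new spine decomposition and Theorem~\ref{maintheorem} in hand, the argument of Wang (2015) (cited as \cite{Wang}) goes through unchanged; it gives no details. What you have written is a self-contained sketch of exactly that kind of argument --- decompose at a lag $t-r$, extract the mean term via the many-to-one formula \eqref{expX} and the intrinsic ultracontractivity estimate \eqref{IU-0}, reduce the error to a fluctuation term that must vanish a.s.\ along a lattice, interpolate in $t$, and pass from a countable dense family of test functions to general $f$ by sandwiching. That is the standard Asmussen--Hering / Chen--Shiozawa template used in \cite{Wang} and in the related papers \cite{CRY, LRS} in the bibliography, so at the level of ideas your route and the paper's coincide.

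The one place where your sketch should not be taken at face value is the fluctuation step, which you yourself call the crux. You propose truncating each subtree functional $\langle f,X^u_r\rangle$ at a level $K_n$ growing with $n$ and controlling the ``high'' part by Borel--Cantelli ``along the lines of Lemma~\ref{lemma1}.'' As stated, this does not close: the first-moment (Markov) bound gives
$\mathbf P_x[\exists\,u\in L_{n\delta-r}:\langle f,X^u_r\rangle>K_n]\lesssim e^{\lambda_1 n\delta}/K_n$, while Chebyshev on the truncated part requires $\sum_n e^{-\lambda_1 n\delta}K_n<\infty$; one cannot satisfy both with the same $K_n$ without a tail estimate on $\langle f,X_r\rangle$ strictly better than integrability, and neither \eqref{LlogL-BH} nor Lemma~\ref{lemma1} supplies a tail bound for the one-particle martingale $M_r(\phi)$ at a fixed finite time $r$ directly. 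The device that actually works --- and which you mention only parenthetically as if it were equivalent --- is the Kesten--Stigum truncation of the \emph{offspring numbers} $r^v$ along the genealogy (by a threshold of the order $e^{\lambda_1\zeta^v}/\phi(Y^v_{\zeta^v})$ times a slowly varying factor), which makes the truncated process have finite conditional second moments while the probability that the truncation is ever active is summable precisely under \eqref{LlogL-BH}; this is where the spine computation (cf.\ Lemma~\ref{lemma1}) genuinely enters, not via a tail bound on $M_r(\phi)$. The two truncation schemes are not interchangeable, and only the offspring-level truncation produces the needed summable series. Apart from this --- which is the non-routine part of Wang's argument and should be spelled out rather than asserted by analogy --- your reductions, the use of \eqref{IU-0} to peel off the $c e^{-\nu r}W$ error, the handling of $\{M_\infty(\phi)=0\}$, the lattice/interpolation step, and the construction of $\Omega_0$ from a countable family of test functions are all correct and match the intended proof.
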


With our spine decomposition theorem
and Theorem \ref{maintheorem},
the proof of \cite{Wang} goes through. We omit the details.

\subsection{Kolmogorov type theorem for critical branching Hunt process}

In this subsection, we use our spine decomposition to give a proof of a
Kolmogorov type theorem for critical branching Hunt processes,
see Theorem \ref{thm:Kolmogorov-type-of-theorem} below.
The key to prove this result is Lemma \ref{lem:Kolmogorov-1} below, which says that
studying the limit of $\frac{t\mathbf{P}_{x}(\langle \phi, X_t\rangle>0)}{\phi(x)}$ as $t\to\infty$ is equivalent to studying the limit of
$\int_E t\mathbf{P}_{x}(\langle \phi, X_t\rangle>0)\widehat\phi(x)m(\mathrm dx)$
as $t\to\infty$. The proof of Lemma \ref{lem:Kolmogorov-1} uses our spine decomposition.

Throughout this subsection, we assume that  Assumptions \ref{assume1} and \ref{assume2} hold. Let $\lambda_1$, $\phi$ and $\widehat\phi$ be defined as in Subsection \ref{supercritical}.
Put
\begin{equation}\label{def-V}
V(x):=\psi''(x, 1)=\sum_{k=2}^\infty k(k-1)p_k(x), \quad x\in E
\end{equation}
and
\begin{equation}\label{sigma}
\sigma^2:=\int_E\beta(y)V(y)\phi^2(y)\widehat\phi(y)m(\mathrm dy).
\end{equation}

Let $\Psi$ be the operator on ${\cal  B}_E^+$ defined by
\[
(\Psi f)(x) := \psi(x,f(x)),	\quad
f\in {\cal B}^+(E), x\in E.
\]
Recall that $f$ is automatically extended to $E_{\Delta}$ by setting $f(\Delta)=0$.
For $f\in {\cal B}^+(E)$, put
$$
V_t(e^{-f})(x):=\mathbf{P}_x(\exp\langle -f, X_t\rangle),\quad t\geq 0, x\in E.
$$
Then \eqref{int2}  can be written as
\begin{equation*}
V_t(e^{-f})(x)=P_t(e^{-f}\mathbf{1}_E)(x)+\Pi_{x}(t\geq \zeta)+\int^t_0
P_{r}\left[\left(\Psi(V_{t-r}(e^{-f}))- V_{t-r}(e^{-f})\right)\beta \right](x)\mathrm ds,
\end{equation*}
where we used the fact that $\beta(\Delta)=0$.
Note that
  $$1=\Pi_x(t<\zeta)+\Pi_x(t\geq \zeta)=P_t{\mathbf 1}_E(x)+\Pi_x(t\geq \zeta).$$
Thus we have
\begin{equation*}
1-V_t(e^{-f})(x)=P_t((1-e^{-f})\mathbf{1}_E)(x)+\int^t_0
P_{r}\left[\left(-\Psi(V_{t-r}(e^{-f}))+ V_{t-r}(e^{-f})\right)\beta\right](x)\mathrm ds,
\end{equation*}
 which can be written as
\begin{align*}
&1-V_t(e^{-f})(x)=P_t((1-e^{-f})\mathbf{1}_E)(x)\nonumber\\
&+\int^t_0
P_{r}\left[\left( A V_{t-r}(e^{-f})+1-A-\Psi(V_{t-r}(e^{-f}))+ (A-1)(1-V_{t-r}(e^{-f}))\right)\beta\right](x)\mathrm ds,
\end{align*}
 which in turn is equivalent to
 \begin{align}\label{int5}
1-V_t(e^{-f})(x)=&P^{(1-A)\beta}_t((1-e^{-f})\mathbf{1}_E)(x)\nonumber\\
&+\int^t_0
P^{(1-A)\beta}_{r}[ (AV_{t-r}(e^{-f})+1-A-\Psi(V_{t-r}(e^{-f}))\beta]\mathrm ds.
 \end{align}

We first consider the asymptotic behavior of
$$
v_t(x):=\mathbf P_x(X_t(E)=0), \quad t>0, x\in E.
$$
By monotone convergence, we have
\begin{equation*}
	v_t(x) = \lim_{\theta\to \infty}V_t(e^{-\theta \mathbf 1_E} )(x),
	\quad t> 0, x\in E.
\end{equation*}
By the Markov property of $X$, we have
\begin{equation}\begin{split}\label{eq: simigroup for small vt}
	V_t v_s(x)
	&= \mathbf P_x[e^{\langle X_t,\log\lim_{\theta\to\infty } V_s(e^{-\theta \mathbf 1_E} )\rangle}]
	= \lim_{\theta \to \infty}  \mathbf P_x[e^{\langle X_t, \log V_s(e^{-\theta \mathbf 1_E} )\rangle}]\\
	&= \lim_{\theta\to\infty} V_t V_s(e^{-\theta \mathbf 1_E} )(x)
	= v_{t+s}(x),
	\quad s,t>0, x\in E.
\end{split}\end{equation}
Using \eqref{int5}, \eqref{eq: simigroup for small vt} and $f=-\log v_s$, we get
\begin{align}\label{int-v}
1-v_{t+s}(x)=&P_t^{(1-A)\beta}((1-v_s)\mathbf{1}_E)(x)\nonumber\\
&+\int^t_0
P_{r}^{(1-A)\beta}[ \left(Av_{t-r+s}+1-A-\Psi(v_{t-r+s})\right)\beta](x)\mathrm ds.
\end{align}

Define
$$
v_\infty(x)=:\lim_{t\to\infty}v_t(x)=\mathbf P_x(\exists t>0 \mbox{ such that } X_t(E)=0).
$$

Recall the quantities $V$ and $\sigma^2$ defined in \eqref{def-V} and \eqref{sigma}.
Throughout  this subsection we assume that
\begin{assumption}\label{assume4}
(i) The branching Hunt process $X$ is critical, i.e., $\lambda_1 = 0$;

(ii)
$\sigma^2>0$;

(iii) the function $\phi V: x\rightarrow \phi(x)V(x)$  is bounded on $E$.
\end{assumption}

\begin{lemma}\label{extinction}   Suppose that Assumptions \ref{assume1},  \ref{assume2} and \ref{assume4} (i-ii) hold.
Then for any $x\in E$,
\begin{equation}\label{uniform-to-0}
\lim_{t\to\infty}\sup_{x\in E}\frac{\mathbf P_x(X_t(E)>0)}{\phi(x)}=0.
\end{equation}
\end{lemma}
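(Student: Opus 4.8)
The plan is to exploit the moment/equation formula \eqref{int-v} together with the intrinsic ultracontractivity estimate \eqref{u-convergent'} and the criticality $\lambda_1=0$. Write $w_t(x) := 1-v_t(x) = \mathbf{P}_x(X_t(E)>0)$. From \eqref{int-v} with $s\to\infty$ replaced by a finite-time iteration (or directly, using $w_{t+s}$), and from $\lambda_1=0$ so that $P^{(1-A)\beta}_t\phi = \phi$, the idea is to control $\langle \widehat\phi, w_t\rangle_m := \int_E \widehat\phi(x) w_t(x)\, m(\mathrm dx)$ first, and then upgrade to the uniform statement. The nonlinear term in \eqref{int-v} is $g_t(y):=\bigl(A v_t + 1 - A - \Psi(v_t)\bigr)(y)\beta(y)$; since $\psi(y,1)=1$, $\psi'(y,1)=A(y)$ and $\psi$ is convex, a Taylor expansion around $z=1$ gives $A v_t + 1 - A - \Psi(v_t) = -\bigl(\psi(y,v_t)-\psi(y,1)-\psi'(y,1)(v_t-1)\bigr) \le 0$, so $g_t \le 0$; moreover it is $O(\beta(y)V(y)(1-v_t(y))^2)$ near $v_t=1$, which under Assumption \ref{assume4}(iii) (boundedness of $\phi V$) and boundedness of $\beta$ is $O(\beta(y)\phi(y)^{-1} w_t(y)^2)$ up to constants. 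Pairing \eqref{int-v} with $\widehat\phi$ and using duality $\langle \widehat\phi, P^{(1-A)\beta}_r h\rangle_m = \langle \widehat\phi, h\rangle_m$ (since $\widehat\phi$ is the $\lambda_1=0$ eigenfunction of the dual), one gets
$$
\langle \widehat\phi, w_{t+s}\rangle_m = \langle \widehat\phi, w_s\rangle_m - \int_0^t \langle \widehat\phi, -g_{t-r+s}\rangle_m\, \mathrm dr,
$$
so $s\mapsto \langle \widehat\phi, w_s\rangle_m$ is nonincreasing and hence converges to some limit $\ell\ge 0$ as $s\to\infty$.

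Next I would show $\ell = 0$. Suppose $\ell>0$. Because the integrated defect $\int_0^\infty \langle \widehat\phi, -g_r\rangle_m\, \mathrm dr < \infty$ (it is bounded by $\langle \widehat\phi, w_0\rangle_m \le \|\widehat\phi\|$ after the $\theta\to\infty$ limit), and because $-g_r(y) \asymp \beta(y)V(y)\phi(y)^{2}\widehat\phi(y)^{-1}\cdot (\widehat\phi(y)\phi(y)^{-1} w_r(y))^2$ roughly, convergence of this integral forces $w_r\to 0$ in a suitable averaged sense. The cleanest route: combine \eqref{int-v} with the uniform ratio-limit \eqref{u-convergent'}, which gives $\sup_{x} w_{t+s}(x)/\phi(x) \le (1+o(1))\langle \widehat\phi, w_s\rangle_m + (\text{controlled nonlinear term})$; if $\ell>0$ we would get $w_t(x)/\phi(x)$ bounded below along a subsequence, but then the nonlinear sink term $-g_t$ would have strictly positive $\widehat\phi$-integral bounded away from zero, contradicting the finiteness of $\int_0^\infty\langle\widehat\phi,-g_r\rangle_m\,\mathrm dr$. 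Hence $\ell=0$, i.e. $\langle \widehat\phi, w_t\rangle_m \to 0$.

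Finally, to pass from $\langle \widehat\phi, w_t\rangle_m\to 0$ to the uniform conclusion \eqref{uniform-to-0}, I would apply \eqref{int-v} over a unit (or short) time interval: for $t\ge 1$,
$$
\frac{w_t(x)}{\phi(x)} \le \frac{P^{(1-A)\beta}_1\bigl((w_{t-1})\mathbf 1_E\bigr)(x)}{\phi(x)} + \frac{1}{\phi(x)}\int_0^1 P^{(1-A)\beta}_r\bigl(-g_{t-r}\bigr)(x)\, \mathrm dr,
$$
where on the right the linear term is $\le c\,\langle \widehat\phi, w_{t-1}\rangle_m$ by the intrinsic-ultracontractivity bound \eqref{IU-0}, and the nonlinear term is $\le c\,\langle \widehat\phi, w_{t-1}\rangle_m$ as well after bounding $-g_{t-r} \le c\,\beta\,\phi V\, (w_{t-r}/\phi)\cdot \phi^{-1}w_{t-r}$ and using $w\le\phi$-type a priori bounds together with \eqref{IU-0}. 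Both terms tend to $0$ uniformly in $x$, giving \eqref{uniform-to-0}.

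The main obstacle is the second step: ruling out $\ell>0$ cleanly. The delicate point is relating the averaged quantity $\langle\widehat\phi,w_t\rangle_m$ to the nonlinear sink term in a way that yields a genuine contradiction — this requires the intrinsic ultracontractivity in the sharp form \eqref{IU-0}–\eqref{u-convergent'} to convert $L^1(\widehat\phi m)$-smallness into pointwise/uniform smallness, and it requires Assumption \ref{assume4}(iii) precisely to keep the quadratic nonlinearity $-g_t$ comparable to $\beta\phi^{-1}w_t^2$. Care is also needed because $a\, priori$ one only knows $0\le w_t\le 1$, not $w_t\le C\phi$; establishing the latter (uniformly for $t$ bounded away from $0$) via one application of \eqref{int-v} and \eqref{IU-0} should be done first, as it is used repeatedly.
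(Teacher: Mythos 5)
Your proposal opens and closes the same way as the paper: the reduction to showing $\langle 1-v_t,\widehat\phi\rangle_m\to 0$ via the $\widehat\phi$-integrated equation \eqref{1-v-eq}, and the upgrade from $L^1(\widehat\phi\, m)$-smallness to uniform-in-$x$ smallness via a single application of \eqref{IU-0} on a short time interval. Both of those pieces are correct and essentially the paper's. The middle step — showing the limit $\ell:=\lim_{t}\langle 1-v_t,\widehat\phi\rangle_m$ is zero, i.e.\ $v_\infty\equiv 1$ — is where your plan has a genuine gap, and you flag it yourself.

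Your proposed contradiction argument ("$\ell>0$ would force $w_t/\phi$ bounded below along a subsequence, hence $\langle\widehat\phi,-g_t\rangle_m$ bounded away from zero") is not justified: $\ell>0$ is an averaged lower bound and does not, without further work, produce a pointwise lower bound on $w_t/\phi$ on a set where $\beta V>0$; getting such a lower bound from \eqref{int-v} would itself require controlling the nonlinear sink, which is circular at this stage. Moreover, your comparability estimate $-g_t\lesssim\beta\phi^{-1}w_t^2$ leans on Assumption~\ref{assume4}(iii) ($\phi V$ bounded), but the lemma is stated under Assumptions~\ref{assume1}, \ref{assume2}, \ref{assume4}(i)--(ii) only, so the strategy would prove a weaker statement than claimed.

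The paper closes this gap in a cleaner and more direct way, and the key idea missing from your sketch is the identification of $1-v_\infty$ as a scalar multiple of $\phi$. Letting $s\to\infty$ in \eqref{1-v-eq} (using monotonicity and dominated convergence) gives $\langle(Av_\infty+1-A-\Psi(v_\infty))\beta,\widehat\phi\rangle_m=0$; since the integrand has a definite sign this forces $\psi_0(\cdot,v_\infty)=0$ $m$-a.e.\ on $\{\beta>0\}$, where $\psi_0(x,z)=\psi(x,z)-1-A(x)(z-1)$. Plugging this into the $s\to\infty$ limit of \eqref{int-v} kills the nonlinear term exactly, yielding $1-v_\infty=P_t^{(1-A)\beta}(1-v_\infty)$ for all $t$, i.e.\ $1-v_\infty$ is a $\lambda_1=0$ eigenfunction, hence $1-v_\infty=c\phi$ by simplicity of the principal eigenvalue (one can also see this directly from \eqref{IU-0} by letting $t\to\infty$). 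Finally, Assumption~\ref{assume4}(ii) ($\sigma^2>0$) supplies a positive-$m$-measure subset of $\{\beta>0\}$ on which $\psi_0(x,0)>0$; there $\psi_0(x,\cdot)$ is strictly decreasing on $(0,1)$ with unique zero at $z=1$, so $v_\infty=1$ on that set, forcing $c=0$. No contradiction argument and no use of \ref{assume4}(iii) are needed. You should replace your step~2 with this eigenfunction identification.
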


\begin{proof}
For any $f,g\in {\cal B}^+(E)$, we use $\langle f,g\rangle_m$ to denote  $\int_Ef(x)g(x)m(\mathrm dx).$
Under Assumption \ref{assume2},  $\langle 1, \widehat\phi\rangle_m<\infty$. In fact, according to \eqref{IU-0}, for $t>0$ large enough, there is a $c'_t>0$ such that
$$\widehat\phi(y) \leq q(t,x,y)  (c'_t)^{-1}\phi^{-1}(x),$$
and clearly, as a function of $y$, the right hand above  is integrable with respect to $m$.
Integrating \eqref{int5} with respect to  $\widehat\phi(x)m(\mathrm dx)$,
we get that
 \beq\label{1-v-eq}
\langle 1-v_{t+s},\widehat\phi\rangle_m=\langle 1-v_s,\widehat\phi\rangle_m+\int^t_0
\langle\left( Av_{t-r+s}+1-A-\Psi(v_{t-r+s})\right)\beta,\widehat\phi\rangle_m\mathrm ds.
 \eeq
 Letting $s\to\infty$, we get
\begin{equation*}
\langle 1- v_{\infty},\widehat\phi\rangle_m=\langle 1- v_\infty,\widehat\phi\rangle_m+t
\langle\left(A v_\infty+1-A-\Psi(v_{\infty})\right)\beta,\widehat\phi\rangle_m.
\end{equation*}
Thus we have
\begin{equation*}
\langle Av_\infty+1-A-\Psi(v_{\infty}),\beta\widehat\phi\rangle_m=0.
\end{equation*}
It is easy to check that for any $x\in E$, $Az+1-A-\psi(x,z)\le0, \forall z\in[0,1]$. Since $\widehat\phi(x)>0$ on $E$, we must have
\begin{equation}\label{0-ae}
Av_\infty+1-A-\Psi(v_{\infty})=0,\quad  m\mbox{-a.e.} \mbox{ on }\{x\in E, \beta(x)>0\}.
\end{equation}
Letting $s\to\infty$ in \eqref{int-v}, we get
\begin{equation*}
1-v_\infty(x)=P_t^{(1-A)\beta}((1-v_\infty)\mathbf{1}_E)(x)+\int^t_0
P_{s}^{(1-A)\beta}[(Av_\infty+1-A-\psi(v_\infty))\beta](x)\mathrm ds,
\end{equation*}
 and thus $1-v_\infty(x)=P_t^{(1-A)\beta}((1-v_\infty)\mathbf{1}_E)(x)$, which says that $1-v_\infty$ is an eigenfunction of $\A+(A-1)\beta$ corresponding to the eigenvalue $\lambda_1=0$.
 Since the eigenvalue $\lambda_1=0$ is simple,
 $1-v_\infty=c\phi$ on $E$ for some constant $c$.
 Note that, for each fixed $x\in E$, the function $\psi_0(x,z):= \psi(x,z)-A(x)z+A(x)-1$ is strictly decreasing for $z\in(0, 1)$
with $\psi_0(x,1)=0$ and $\psi_0(x,0)=\sum^\infty_{k=2}(k-1)p_k(x)\geq 0$.
 Assumption \ref{assume4} (ii)  implies that
 $m(\{x\in E; \beta(x)>0, \psi_0(x,0)>0\})>0$.
 Since $v_\infty$ satisfies \eqref{0-ae}, we must have $c=0$, or equivalently  $v_\infty\equiv 1$. Thus $$\lim_{t\to\infty}\mathbf{P}_x(X_t(E)>0)=1-v_\infty(x)=0,\quad x\in E.$$

By \eqref{int-v} and \eqref{IU-0}, we have
$$1-v_{t+s}(x)\leq P_t^{(1-A)\beta}((1-v_s)\mathbf{1}_E)(x)\le (1+ce^{-\nu t})\phi(x)\int_E\widehat\phi(y)(1-v_s)(y)m(\mathrm dy),
$$
which implies
\begin{equation*}
\frac{1-v_{t+s}(x)}{\phi(x)}\leq (1+ce^{-\nu t})\int_E\widehat\phi(y)(1-v_s)(y)m(\mathrm dy).
\end{equation*}
Using the monotonicity of $v_t$ in $t$,
we get \eqref{uniform-to-0}.
 \qed
\end{proof}

The following Kolmogorov type theorem is the main result of this subsection.

\begin{thm}\label{thm:Kolmogorov-type-of-theorem} Suppose that Assumptions \ref{assume1}, \ref{assume2} and  \ref{assume4}  hold. Then
\begin{equation}\label{servive-rate}
\lim_{t\to\infty}\frac{t\mathbf{P}_{x}(\langle \phi, X_t\rangle>0)}{\phi(x)}=\frac{2}{\sigma^2}
\end{equation}
uniformly for $x\in E$.
\end{thm}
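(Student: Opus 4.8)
Write $w_t(x):=\mathbf P_x(X_t(E)>0)=1-v_t(x)$; since $\phi>0$ on $E$ and $\phi(\Delta)=0$ we have $\{\langle\phi,X_t\rangle>0\}=\{X_t(E)>0\}$, so the assertion is that $tw_t(x)/\phi(x)\to 2/\sigma^2$ uniformly in $x$. The plan is to split this into a reduction step based on the spine decomposition and a nonlinear ODE analysis of the integrated quantity $F(t):=\langle w_t,\widehat\phi\rangle_m$. For the reduction (this is the content of Lemma \ref{lem:Kolmogorov-1}), note that in the critical case $\lambda_1=0$ one has $M_t(\phi)=\langle\phi,X_t\rangle/\phi(x)$, and since $\langle\phi,X_t\rangle>0$ holds $\mathbf Q_x$-a.s.\ (the spine never dies), Theorem \ref{l:Qx} yields
$$
\frac{t\,\mathbf P_x(\langle\phi,X_t\rangle>0)}{\phi(x)}=t\,\mathbf Q_x\!\left(\frac{1}{\langle\phi,X_t\rangle}\right).
$$
Under $\mathbf Q_x$ the spine runs as the $\Pi^\phi$-process, which by \eqref{u-convergent'} is ergodic with stationary law $\phi\widehat\phi\,m$ and mixes exponentially, and by Theorem \ref{t:spine-decom} the conditional mean of $\langle\phi,X_t\rangle$ given the spine equals $\phi(\widetilde Y_t)+\sum_{u<\xi_{t}}(r^u-1)\phi(\widetilde Y_{\zeta^u})$, a sum of $\Theta(t)$ terms in which the starting point $x$ affects only boundedly many summands. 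I would use this, with an ergodic/law-of-large-numbers argument along the spine, to show that $t\,\mathbf Q_x(\langle\phi,X_t\rangle^{-1})$ converges, uniformly in $x$, to a limit that does not depend on $x$; that limit must then equal $\lim_{t\to\infty}\int_E t\,\mathbf P_x(\langle\phi,X_t\rangle>0)\,\widehat\phi(x)\,m(\mathrm dx)=\lim_{t\to\infty}tF(t)$, so it suffices to prove $tF(t)\to 2/\sigma^2$.

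To analyze $F$, set $g(z,x):=A(x)z+1-A(x)-\psi(x,z)$, which satisfies $g(\cdot,x)\le0$ on $[0,1]$. Integrating \eqref{int-v} against $\widehat\phi(x)\,m(\mathrm dx)$ and using that $\widehat\phi$ is an eigenfunction of $\widehat{\A}+(A-1)\beta$ with eigenvalue $\lambda_1=0$, so that $\langle P^{(1-A)\beta}_r h,\widehat\phi\rangle_m=\langle h,\widehat\phi\rangle_m$, one obtains, after the substitution $u=t-r+s$, that $F(t)=F(s)+\int_s^t\langle g(v_u,\cdot)\,\beta,\widehat\phi\rangle_m\,\mathrm du$, hence $F'(t)=\langle g(v_t,\cdot)\,\beta,\widehat\phi\rangle_m$ for $t>0$. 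Since $g(1,x)=0$, $\partial_z g(1,x)=0$ and $\partial^2_{zz}g(1,x)=-V(x)$, Taylor's theorem gives $g(1-w,x)=-\tfrac12V(x)w^2+o(w^2)$; together with Lemma \ref{extinction} (which gives $\sup_x w_t(x)/\phi(x)\to0$) and the boundedness of $\phi V$ from Assumption \ref{assume4}(iii), this yields $F'(t)=-\tfrac12\langle\beta V w_t^2,\widehat\phi\rangle_m\,(1+o(1))$ as $t\to\infty$.

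Finally I would feed the uniform flatness $w_t(x)=\phi(x)\bigl(F(t)+o(F(t))\bigr)$ — itself part of the reduction step, using the normalization $\langle\phi,\widehat\phi\rangle_m=1$ — back into this identity to obtain $\langle\beta Vw_t^2,\widehat\phi\rangle_m\sim F(t)^2\langle\beta V\phi^2,\widehat\phi\rangle_m=\sigma^2F(t)^2$ by the definition \eqref{sigma} of $\sigma^2$, so $F'(t)=-\tfrac12\sigma^2F(t)^2(1+o(1))$. A Gronwall/comparison argument then gives $\bigl(1/F(t)\bigr)'\to\tfrac12\sigma^2$, hence $1/F(t)\sim\tfrac12\sigma^2 t$ and $tF(t)\to2/\sigma^2$; combined with the reduction step this proves \eqref{servive-rate}. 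I expect the main obstacle to be the reduction lemma: extracting from the spine decomposition and intrinsic ultracontractivity a genuinely quantitative statement that $tw_t(x)/\phi(x)$ converges uniformly to an $x$-free limit and that $w_t\sim F(t)\phi$ with well-controlled error; closing the nonlinear ODE also requires the various $o(1)$'s to be uniform, so that $F'\sim-\tfrac12\sigma^2F^2$ can be integrated to the exact constant $2/\sigma^2$.
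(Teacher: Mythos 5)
Your overall architecture matches the paper's: prove a uniform flatness statement (the paper's Lemma~\ref{lem:Kolmogorov-1}), then close a nonlinear ODE for the $\widehat\phi$-integrated survival probability $b(t)=\langle 1-v_t,\widehat\phi\rangle_m$ (the paper's Lemma~\ref{lem:Kolmogorov-2}) to get $tb(t)\to 2/\sigma^2$. Your ODE analysis is essentially identical to the paper's, including the use of $\langle P_r^{(1-A)\beta}h,\widehat\phi\rangle_m=\langle h,\widehat\phi\rangle_m$ in the critical case, the second-order Taylor expansion of $\psi$ at $z=1$, the boundedness of $\phi V$, and the normalization $\langle\phi,\widehat\phi\rangle_m=1$; this part is sound.

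The genuine gap is in the reduction step, which you correctly flag as the main obstacle but do not actually resolve. Your sketch --- an ergodic/LLN argument along the spine, invoking Theorem~\ref{t:spine-decom} for the \emph{conditional mean} of $\langle\phi,X_t\rangle$ and the exponential mixing \eqref{u-convergent'} of $\Pi^\phi$ --- does not by itself control the \emph{inverse moment} $\mathbf{Q}_x\big((X_t(\phi))^{-1}\big)$, which is what the identity $\frac{1-v_t(x)}{\phi(x)}=\mathbf{Q}_x\big((X_t(\phi))^{-1}\big)$ gives. A bound on the conditional mean of $X_t(\phi)$ says nothing (one-sided Jensen) about $\mathbf{Q}_x\big((X_t(\phi))^{-1}\big)$; the quantity is driven by the lower tail of $X_t(\phi)$, i.e., by near-extinction of the immigrated subtrees, and the claim that ``$x$ affects only boundedly many summands'' does not translate into a bound on how those few summands distort the inverse moment. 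The paper instead makes the reduction quantitative by splitting $X_t=I^{(0,t_0]}_t+I^{(t_0,t]}_t$, using $(X_t(\phi))^{-1}\le (I^{(t_0,t]}_t(\phi))^{-1}$, computing $\mathbf{Q}_x\big[(I^{(t_0,t]}_t(\phi))^{-1}\,\big|\,\cG_{t_0}\big]=(\phi^{-1}(1-v_{t-t_0}))(\widetilde Y_{t_0})$ by the Markov property, and then bounding the two corrections: $\epsilon^1_x(t_0,t)$, the deviation of the $\Pi^\phi$-semigroup from its stationary density, by $ce^{-\nu t_0}b(t-t_0)$ via \eqref{IU}; and $\epsilon^2_x(t_0,t)$, the early-immigration contribution, by $t_0\|\beta V\phi\|_\infty\|\phi^{-1}(1-v_{t-t_0})\|_\infty(1+ce^{-\nu t_0})\,b(t-t_0)$, crucially using the uniform extinction estimate of Lemma~\ref{extinction}. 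One then chooses $t_0(t)\to\infty$ slowly enough that $t_0(t)\|\phi^{-1}(1-v_{t-t_0(t)})\|_\infty\to 0$, and a final step compares $b(t)$ to $b(t-t_0(t))$. None of this machinery is present in your sketch, and the ergodic-LLN route does not obviously supply substitutes for these two explicit error bounds; so as written the reduction is not proved. Also note a minor circularity: you assert that $t\,\mathbf{Q}_x\big((X_t(\phi))^{-1}\big)$ converges to an $x$-free limit \emph{before} the ODE argument establishes that $tb(t)$ converges; the paper avoids this by proving only the ratio statement $\sup_x\big|\frac{1-v_t(x)}{b(t)\phi(x)}-1\big|\to 0$ first, and only afterward extracting the value $2/\sigma^2$ from the ODE.
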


We prove the above  result by  proving two lemmas first.
Define
\begin{equation}\label{def-b(t)}
b(t):=\int_E(1-v_t(x))\widehat\phi(x)m(\mathrm dx)=\int_E\mathbf P_x(X_t(E)>0)\widehat\phi(x)m(\mathrm dx).
\end{equation}

\begin{lemma}\label{lem:Kolmogorov-1}
	Under Assumptions \ref{assume1}, \ref{assume2} and \ref{assume4}, we have
\[
	\sup_{x\in E}\Big | \frac{1-v_t(x)}{b(t) \phi(x)} - 1\Big |
	\xrightarrow[t\to\infty]{} 0,
\]
where $b(t)$ is defined in \eqref{def-b(t)}.
\end{lemma}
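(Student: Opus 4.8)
The plan is to run the integral equation \eqref{int-v} through the intrinsic ultracontractivity bound \eqref{IU-0} (now with $\lambda_1=0$) and feed in the a priori estimate from Lemma \ref{extinction}. Write $w_t(x):=1-v_t(x)$, $\theta_t(x):=w_t(x)/\phi(x)$, $g(t):=\sup_{x\in E}\theta_t(x)$, and $\psi_0(x,z):=\psi(x,z)-A(x)z+A(x)-1$ as in the proof of Lemma \ref{extinction}, so that $Av_s+1-A-\Psi(v_s)=-\psi_0(\cdot,v_s)$, $0\le\psi_0(x,\cdot)$ is nonincreasing on $[0,1]$, $\psi_0(x,1)=0$ and $\partial_z\psi_0(x,1)=\psi'(x,1)-A(x)=0$. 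Since extinction is absorbing, $v_t$ is nondecreasing in $t$, so $w_t(x),\theta_t(x),g(t)$ and $b(t)=\langle w_t,\widehat\phi\rangle_m$ are all nonincreasing in $t$; and $g(t)\to0$ is exactly \eqref{uniform-to-0}. I will also use the elementary consequence of \eqref{IU-0} that $\big|P^{(1-A)\beta}_r h(x)/\phi(x)-\langle h,\widehat\phi\rangle_m\big|\le c\,e^{-\nu r}\langle h,\widehat\phi\rangle_m$ for all $x\in E$ and all $h\in\cB^+(E)$ with $\langle h,\widehat\phi\rangle_m<\infty$, together with the dual invariance $\widehat P^{(1-A)\beta}_r\widehat\phi=\widehat\phi$ coming from \eqref{invar}.

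Fix $L>0$ and $T>L$ and apply \eqref{int-v} with $t=L$, $s=T-L$:
\begin{equation*}
w_T(x)=P^{(1-A)\beta}_L\big((1-v_{T-L})\mathbf 1_E\big)(x)-\int_0^L P^{(1-A)\beta}_r\big[\psi_0(\cdot,v_{T-r})\beta\big](x)\,\mathrm dr .
\end{equation*}
Integrating this against $\widehat\phi\,m(\mathrm dx)$ and using the dual invariance gives $b(T)=b(T-L)-\int_0^L\kappa(T-r)\,\mathrm dr$, where $\kappa(t):=\langle\psi_0(\cdot,v_t)\beta,\widehat\phi\rangle_m\ge0$. Dividing the displayed equation by $\phi(x)$, applying the above estimate to each of the two terms, and subtracting the identity for $b(T)$ (the main terms cancel) yields, for all $x\in E$,
\begin{equation*}
\big|\theta_T(x)-b(T)\big|\le c\,e^{-\nu L}b(T-L)+c\int_0^L e^{-\nu r}\kappa(T-r)\,\mathrm dr .
\end{equation*}

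The quantitative heart of the matter is that $\kappa(t)=o(b(t))$, and this is where Assumption \ref{assume4}(iii) enters. By Taylor's formula at $z=1$ and the monotonicity of $\psi''(x,\cdot)$, $\psi_0(x,1-u)=\tfrac{u^2}2\psi''(x,\xi)\le\tfrac{u^2}2 V(x)$ for $u\in[0,1]$; taking $u=w_t(x)\le g(t)\phi(x)$,
\begin{equation*}
\psi_0\big(x,v_t(x)\big)\le\tfrac12 V(x)w_t(x)^2\le\tfrac12 g(t)\big(\phi(x)V(x)\big)w_t(x)\le\tfrac12 g(t)\,\|\phi V\|_\infty\,w_t(x),
\end{equation*}
so $\kappa(t)\le C_0\,g(t)\,b(t)$ with $C_0:=\tfrac12\|\phi V\|_\infty\|\beta\|_\infty$. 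Using that $g$ and $b$ are nonincreasing, $g(T-r)b(T-r)\le g(T-L)b(T-L)$ on $[0,L]$, hence $\int_0^L e^{-\nu r}\kappa(T-r)\,\mathrm dr\le \tfrac{C_0}{\nu}g(T-L)b(T-L)$; and the $b$-equation gives $b(T)\ge b(T-L)\big(1-C_0 L\,g(T-L)\big)$. Consequently, for every fixed $L$ and all $T$ large enough that $C_0 L\,g(T-L)\le\tfrac12$,
\begin{equation*}
\sup_{x\in E}\Big|\frac{1-v_T(x)}{b(T)\phi(x)}-1\Big|\le 2\Big(c\,e^{-\nu L}+\frac{cC_0}{\nu}g(T-L)\Big).
\end{equation*}
Letting $T\to\infty$ with $L$ fixed and using $g(T-L)\to0$ gives $\limsup_{T\to\infty}\sup_x|\cdots|\le 2c\,e^{-\nu L}$; then $L\to\infty$ finishes the proof.

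Routine points I would verify but not dwell on: $\langle1,\widehat\phi\rangle_m<\infty$ and $b(t)>0$ for all $t$ (so the ratio is well defined), established exactly as in the proof of Lemma \ref{extinction}; the Tonelli interchanges, justified since all integrands are nonnegative and $\psi_0(x,v_t(x))\le\psi_0(x,0)=p_0(x)+A(x)-1\le\|A\|_\infty$ so that $\psi_0(\cdot,v_t)\beta$ is bounded; and the finiteness of $V$ and $\psi(x,\cdot)\in C^2([0,1])$, which is part of Assumption \ref{assume4}(iii). The one genuinely delicate point is the order of limits: $b(T-L)/b(T)$ is controlled only through the non-quantitative smallness of $C_0 L\,g(T-L)$, so one must fix $L$ and exploit $g(T-L)\to0$ from Lemma \ref{extinction} first, and only afterwards send $L\to\infty$. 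I expect the main obstacle to be precisely the upgrade $\kappa(t)=o(b(t))$: with only the crude linear bound $\psi_0(x,v_t(x))\le\|A\|_\infty w_t(x)$ one would be left with a term of order $\tfrac{c}{\nu}\|A\|_\infty\|\beta\|_\infty$ that does not vanish as $L\to\infty$, so the near-degeneracy of $w_t/\phi$ together with $\|\phi V\|_\infty<\infty$ must be used in an essential way.
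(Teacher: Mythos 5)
Your proof is correct, and it takes a genuinely different route from the paper's. The paper computes $\phi(x)^{-1}(1-v_t(x)) = \mathbf{Q}_x[X_t(\phi)^{-1}]$, splits $X_t = I^{(0,t_0]}_t + I^{(t_0,t]}_t$ according to whether particles descend from immigration before or after time $t_0$, controls the two error terms $\epsilon^1_x,\epsilon^2_x$ using the spine picture and the intrinsic ultracontractivity bound, and then makes a careful choice $t_0 = t_0(t)$ balancing $e^{-\nu t_0}$ against $t_0\|\phi^{-1}(1-v_{t-t_0})\|_\infty$. You instead bypass the spine entirely: you split the nonlinear evolution equation \eqref{int-v} at an intermediate time $L$, integrate against $\widehat\phi\,m$ to produce the exact identity $b(T) = b(T-L) - \int_0^L \kappa(T-r)\,\mathrm{d}r$, and subtract to isolate the error $|\theta_T(x) - b(T)|$, which \eqref{IU-0} controls by $ce^{-\nu L}b(T-L) + c\int_0^L e^{-\nu r}\kappa(T-r)\,\mathrm{d}r$. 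Your key upgrade $\kappa(t)\le C_0\,g(t)\,b(t)$ (Taylor at $z=1$ plus $\|\phi V\|_\infty<\infty$, plus $g(t)\to 0$ from Lemma \ref{extinction}) is exactly the right lever, and it is the same second-order bound $\psi_0(x,z)\le\tfrac12 V(x)(z-1)^2$ that the paper uses later in Lemma \ref{lem:Kolmogorov-2}. Your order of limits (fix $L$, send $T\to\infty$ to kill $g(T-L)$, then $L\to\infty$) is the correct one given only qualitative smallness of $g$, and it parallels the paper's choice of $t_0(t)$. The trade-off: your argument is shorter and purely analytic, relying only on the integral equation and \eqref{IU-0}; the paper's proof is chosen to showcase the spine decomposition, which is the thematic point of the section, though at the cost of some extra machinery for what is at bottom the same estimate. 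Both proofs consume Assumption \ref{assume4}(iii), boundedness of $\beta$, and Lemma \ref{extinction} in essential ways, so neither is strictly more economical in hypotheses.
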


\begin{proof}
First note that
\[
\frac{1-v_t(x)}{\phi(x)}=\frac{\mathbf{P}_x(X_t(E)>0)}{\phi(x)}=\mathbf{Q}_x\left(\frac{{\mathbf 1}_{(X_t(E)>0)}}{\langle\phi, X_t\rangle}\right)=\mathbf{Q}_x\left(X_t(\phi)^{-1}\right).
\]
For $0<t_0<t<\infty$, define
\[
 I_t^{(0, t_0]}=\sum_{u\in L_{t_0}, u\neq\xi_{t_0}}\delta_{Y^u_t} \quad\mbox{ and }\quad
I_t^{(t_0,t]}=\sum_{u\in L_t\setminus L_{t_0}}\delta_{Y^u_t}.
\]
Then we have
\begin{equation}\label{eq:decomposition-on-I}
X_t=I^{(0, t_0]}_t+I^{(t_0, t]}_t.
\end{equation}
Define
$$
\mathbf{Q}_{\phi\widehat\phi m}(\cdot):=\int_E\mathbf{Q}_x(\cdot)\phi(x)\widehat\phi(x) m(\mathrm dx).
$$
Under $\mathbf{Q}_{\phi\widehat\phi m}$,
$X_0=\delta_Z$ with $Z$ being an $E$-valued random variable with distribution $\phi\widehat\phi m$.
	It is easy to see, from the construction of $\mathbf Q_x$  and the Markov property of the immigration that for any $0 < t_0 < t < \infty$,
\[
	\mathbf{Q}_x [(I_t^{(t_0,t]}(\phi))^{-1}|{\cal G}_{t_0}]
	= \mathbf{Q}_{\widetilde Y_{t_0}}[(X_{t-t_0}(\phi))^{-1}]
	= (\phi^{-1}(1-v_{t-t_0}))(\widetilde Y_{t_0}).
\]
Therefore, we have
\[\begin{split}
	\mathbf{Q}_{\phi\widehat\phi m}[(I_t^{(t_0,t]}(\phi))^{-1}]
	= \mathbf{Q}_{\phi\widehat\phi m}[(\phi^{-1}(1-v_{t-t_0}))(\widetilde Y_{t_0}) ]
	= \langle 1- v_{t-t_0},\widehat\phi\rangle_m
\end{split}\]
and
\begin{equation}
\label{eq:Yt0t}\begin{split}
	\mathbf{Q}_x[(I_t^{(t_0,t]}(\phi))^{-1}]
	= &\mathbf{Q}_x[(\phi^{-1}(1-v_{t-t_0)})(\widetilde Y_{t_0}) ]\\
	= & \int_E  p^\phi(t_0,x,y)(\phi^{-1}(1-v_{t-t_0}))(y) m(\mathrm dy).
\end{split}
\end{equation}
	By the decomposition \eqref{eq:decomposition-on-I}, we have
\begin{equation}\label{eq:vt-equation}\begin{split}
	\phi^{-1}(1-v_t(x))
	&= \mathbf{Q}_x [(X_t(\phi))^{-1}]\\
	&= \mathbf{Q}_{\phi\widehat\phi m} [(I^{(t_0,t]}_t(\phi))^{-1}] + \big( \mathbf{Q}_x[(I^{(t_0,t]}_t(\phi))^{-1}] - \mathbf{Q}_{\phi\widehat\phi m} [(I^{(t_0,t]}_t(\phi))^{-1}] \big) \\
	&\quad + \big(\mathbf{Q}_x[(X_t(\phi))^{-1} - (I^{(t_0,t]}_t(\phi))^{-1}] \big)\\
	&=: \langle 1-v_{t-t_0},\widehat\phi \rangle_m + \epsilon_x^1(t_0,t) +\epsilon_x^2(t_0,t).
\end{split}\end{equation}
Suppose that $t_0 >1$, and let $c,\nu > 0$ be the constants in \eqref{IU}. Using \eqref{eq:Yt0t}, we have
\begin{equation}\label{eq:epsilon-1}\begin{split}
	|\epsilon_x^1(t_0,t)|
	& = \big| \mathbf{Q}_x[(I^{(t_0,t]}_t(\phi))^{-1}] - \mathbf{Q}_{\phi\widehat\phi m} [(I^{(t_0,t]}_t(\phi))^{-1}] \big| \\
	& = \big|  \int_E  p^\phi(t_0,x,y)(\phi^{-1}(1-v_{t-t_0}))(y) m(\mathrm dy) - \langle 1-v_{t-t_0},\widetilde \phi\rangle_m \big|\\
	& \leq \int_{y\in E} \big| p^\phi(t_0,x,y) - (\phi\widehat\phi)(y) \big| (\phi^{-1}(1-v_{t-t_0}))(y) m(\mathrm dy)\\
	& \leq ce^{-\nu t_0}\langle 1- v_{t-t_0},\widehat\phi \rangle_m.
\end{split}\end{equation}
We also have
\begin{equation}\label{eq:epsilon-2}\begin{split}
	|\epsilon_x^2(t_0,t)|
	&= \big| \mathbf{Q}_x[(X_t(\phi))^{-1} - (I^{(t_0,t]}_t(\phi))^{-1}] \big| \\
	&= \mathbf{Q}_x[I_t^{(0,t_0]}(\phi)\cdot (X_t(\phi))^{-1}\cdot (I^{(t_0,t]}_t(\phi))^{-1}]\\
	&\leq \mathbf{Q}_x[\mathbf 1_{I_t^{(0,t_0]}(\phi)>0}\cdot (I^{(t_0,t]}_t(\phi))^{-1}]\\
	&= \mathbf{Q}_x \left(\mathbf{Q}_x[\mathbf 1_{I_t^{(0,t_0]}(\phi)>0}|{\cal G}_{t_0}] \cdot \mathbf{Q}_x[ (I^{(t_0,t]}_t(\phi))^{-1}|{\cal G}_{t_0}] \right).
\end{split}\end{equation}
Recall that $\zeta_i$ and $r_i$ are the shorthand notation for $\zeta_{\xi_i}$ and
$r_{\xi_i}$ respectively.
Note that
\begin{align*}
\mathbf{Q}_x[\mathbf 1_{I_t^{(0,t_0]}(\phi)=0}|{\cal G}_{t_0}]=&\mathbf{Q}_x\left[\prod_{\zeta_i\leq t_0}
(\mathbf P_{\widetilde Y(\zeta_i)}(X_{t-\zeta_i}(E)=0))^{r_i-1}
|{\cal G}_{t_0}\right]\\
\geq &\mathbf{Q}_x\left[\prod_{\zeta_i\leq t_0}
(\mathbf P_{\widetilde Y(\zeta_i)}(X_{t-t_0}(E)=0))^{r_i-1}
|{\cal G}_{t_0}\right]
\end{align*}
and  that
\begin{equation}\label{eq:epsilon-2-1}\begin{split}
\mathbf{Q}_x[\mathbf 1_{I_t^{(0,t_0]}(\phi)>0}|{\cal G}_{t_0}]\le &\mathbf{Q}_x\left[\prod_{\zeta_i\leq t_0}(r_i-1)
 \mathbf P_{\widetilde Y(\zeta_i)}(X_{t-t_0}(E)>0)
|{\cal G}_{t_0}\right]\\
\leq& \mathbf{Q}_x\left[\sum_{\zeta_i\leq t_0}(r_i-1)(1-v_{t-t_0})(\widetilde Y(\zeta_i))\right]\\
=&\int^{t_0}_0\beta(\widetilde Y_s)(k-1)kp_k(\widetilde Y_s)(1- v_{t-t_0})(\widetilde Y_s)\mathrm ds\\
\leq &t_0\|\beta V\phi\|_\infty\|\phi^{-1}(1-v_{t-t_0})\|_\infty.
\end{split}\end{equation}
Thus by \eqref{eq:epsilon-2} and \eqref{eq:epsilon-2-1}, we have
\begin{equation}\label{eq:epsilon-2-final}\begin{split}
 |\epsilon_x^2(t_0,t)|\leq &t_0\|\beta V\phi\|_\infty\|\phi^{-1}(1-v_{t-t_0})\|_\infty \mathbf{Q}_x\left(\mathbf{Q}_x[ (I^{(t_0,t]}_t(\phi))^{-1}|{\cal G}_{t_0}] \big]\right)\\
 =&t_0\|\beta V\phi\|_\infty\|\phi^{-1}(1- v_{t-t_0})\|_\infty\int_E  p^\phi(t_0,x,y)(\phi^{-1}(1-v_{t-t_0}))(y) m(\mathrm dy)\\
 \leq &t_0\|\beta V\phi\|_\infty\|\phi^{-1}(1- v_{t-t_0})\|_\infty(1+ce^{-\nu t})\langle 1-v_{t-t_0},\widehat\phi\rangle_m.
\end{split}\end{equation}
Combining \eqref{eq:vt-equation}, \eqref{eq:epsilon-1} and \eqref{eq:epsilon-2-final}, we have that
\begin{equation}\label{vts-inequality}\begin{split}
	\Big|\frac{\phi^{-1}(1-v_t(x))}{\langle 1-v_{t-t_0},\widehat \phi \rangle_m}-1 \Big|
	&\leq \frac{|\epsilon_x^1(t_0,t)|}{\langle 1-v_{t-t_0},\widehat\phi \rangle_m} + \frac{|\epsilon_x^2(t_0,t)|}{\langle 1- v_{t-t_0},\widehat \phi \rangle_m}\\
	&\leq ce^{-\gamma t_0} +t_0\| \beta V\phi\|_\infty \| \phi^{-1}(1- v_{t-t_0})\|_\infty (1+ce^{-\nu t_0}).
\end{split}\end{equation}
Since we know from
	Lemma \ref{extinction}
	that $\| \phi^{-1}(1-v_t)\|_\infty\to 0$ as $t\to\infty$, there exists a map $t\mapsto t_0(t)$ such that,
\[
	t_0(t)
	\xrightarrow[t\to\infty]{} \infty;
	\quad t_0(t)\| \phi^{-1}(1-v_{t-t_0(t)})\|_\infty
	\xrightarrow[t\to\infty]{} 0.
\]
Plugging this choice of $t_0(t)$ back into \eqref{vts-inequality}, we have that
\begin{equation}\label{eq:k1}
	\sup_{x\in E}\Big|\frac{\phi^{-1}(1-v_t(x))}{\langle 1-v_{t-t_0(t)},\widehat\phi \rangle_m}-1 \Big|
	\xrightarrow[t\to\infty]{} 0.
\end{equation}
	Now notice that
\begin{equation}\label{eq:k2}\begin{split}
	\Big |\frac {\langle 1-v_t, \widehat\phi\rangle_m} {\langle 1-v_{t-t_0(t)} , \widehat\phi\rangle_m} - 1 \Big |
	&\leq \int \Big | \frac{\phi^{-1}(1-v_t(x))}{\langle 1-v_{t-t_0(t)} , \widehat\phi\rangle} - 1 \Big| \phi \widehat\phi(x) m(\mathrm dx)\\
	&\leq \sup_{x\in E}\Big|\frac{\phi^{-1}(1-v_t(x))}{\langle 1-v_{t-t_0(t)},\widehat\phi \rangle_m}-1 \Big|
	\xrightarrow[t\to\infty]{} 0.
\end{split}\end{equation}
	Finally, by \eqref{eq:k1}, \eqref{eq:k2} and property of uniform convergence, we have
\[
	\sup_{x\in E}\Big|\frac{\phi^{-1}(1-v_t(x))}{\langle 1-v_{t},\widehat\phi \rangle_m}-1 \Big|
	\xrightarrow[t\to\infty]{} 0
\]
	as desired.
\qed
\end{proof}

\begin{lemma}\label{lem:Kolmogorov-2}
	Under Assumptions  \ref{assume1}, \ref{assume2} and \ref{assume4},  we have
\[
\frac{1}{tb(t)}	\xrightarrow[t\to\infty]{} \frac{1}{2}
 \langle  \beta V\phi,\phi\widehat\phi\rangle_m,
\]
where $b(t)=\langle 1-v_t,\widehat\phi\rangle_m$.
\end{lemma}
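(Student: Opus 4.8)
The plan is to extract from the integral equation \eqref{1-v-eq} a first-order differential relation for $b(t)=\langle 1-v_t,\widehat\phi\rangle_m$, to linearise its right-hand side by a second-order Taylor expansion of $\psi(x,\cdot)$ at the point $1$, and then to turn the resulting asymptotics for $b'(t)/b(t)^2$ into the stated asymptotics for $tb(t)$ by a Cesàro (equivalently, L'Hôpital) argument.

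First I would take $s=0$ in \eqref{1-v-eq} (using $v_{0+}\equiv 0$) and substitute $u=t-r$ to get
\[
b(t)=\langle 1,\widehat\phi\rangle_m+\int_0^t g(u)\,\mathrm du,
\qquad
g(u):=\big\langle\big(Av_u+1-A-\Psi(v_u)\big)\beta,\ \widehat\phi\big\rangle_m\le 0 ,
\]
where $\langle 1,\widehat\phi\rangle_m<\infty$ by the opening of the proof of Lemma \ref{extinction}; since $A$ is bounded, $|Az+1-A-\psi(x,z)|$ is bounded uniformly on $E\times[0,1]$, so $g$ is bounded and $b$ is Lipschitz with $b'=g$ a.e. For each $x\in E$ and $z\in[0,1)$, Taylor's formula with Lagrange remainder gives $\psi(x,z)=1-A(x)(1-z)+\frac12\psi''(x,\theta_{x,z})(1-z)^2$ for some $\theta_{x,z}\in(z,1)$, hence the pointwise identity $A(x)z+1-A(x)-\psi(x,z)=-\frac12\psi''(x,\theta_{x,z})(1-z)^2$. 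Putting $z=v_t(x)$ and dividing by $b(t)^2$ yields, for every $t>0$,
\[
\frac{g(t)}{b(t)^2}=-\frac12\int_E\psi''\big(x,\theta_{x,v_t(x)}\big)\,\beta(x)\,
\frac{(1-v_t(x))^2}{b(t)^2}\,\widehat\phi(x)\,m(\mathrm dx).
\]

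Next I would let $t\to\infty$ under this integral. By Lemma \ref{extinction}, $v_t(x)\uparrow 1$, so $\theta_{x,v_t(x)}\to 1$; the map $z\mapsto\psi''(x,z)=\sum_{k\ge2}k(k-1)p_k(x)z^{k-2}$ is nondecreasing on $[0,1]$ with left limit $V(x)$ at $1$, and $V(x)<\infty$ because $\phi(x)V(x)\le\|\phi V\|_\infty$ with $\phi(x)>0$ (Assumption \ref{assume4}(iii)); hence $\psi''(x,\theta_{x,v_t(x)})\le V(x)$ for all $t$ and $\psi''(x,\theta_{x,v_t(x)})\to V(x)$ as $t\to\infty$. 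By Lemma \ref{lem:Kolmogorov-1}, $\frac{1-v_t(x)}{b(t)\phi(x)}\to 1$ uniformly in $x$, so $\frac{(1-v_t(x))^2}{b(t)^2}\to\phi(x)^2$ uniformly and $\le 2\phi(x)^2$ for all large $t$. For such $t$ the integrand is then dominated by the integrable function $2V(x)\beta(x)\phi(x)^2\widehat\phi(x)$ (integrable since $\langle V\beta\phi^2,\widehat\phi\rangle_m\le\|\phi V\|_\infty\,\|\beta\|_\infty\,\langle\phi,\widehat\phi\rangle_m<\infty$) and converges pointwise to $V(x)\beta(x)\phi(x)^2\widehat\phi(x)$; dominated convergence gives
\[
\frac{g(t)}{b(t)^2}\ \xrightarrow[t\to\infty]{}\ -\frac12\langle V\beta\phi^2,\widehat\phi\rangle_m=-\frac12\langle\beta V\phi,\phi\widehat\phi\rangle_m .
\]

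Finally, $b(t)>0$ for every $t$ (for instance $1-v_t(x)\ge e^{-\|\beta\|_\infty t}\,\Pi_x(t<\zeta)>0$ by the strict positivity of the transition densities in Assumption \ref{assume1}), and $b(t)\downarrow 0$ as $t\to\infty$ (since $v_t\uparrow 1$ and $\langle 1,\widehat\phi\rangle_m<\infty$), so $1/b$ is locally absolutely continuous with $(1/b)'(u)=-g(u)/b(u)^2$ for a.e.\ $u$. By the fundamental theorem of calculus and the Cesàro lemma,
\[
\frac{1}{t}\Big(\frac{1}{b(t)}-\frac{1}{b(0)}\Big)
=\frac{1}{t}\int_0^t\frac{-g(u)}{b(u)^2}\,\mathrm du
\ \xrightarrow[t\to\infty]{}\ \frac12\langle\beta V\phi,\phi\widehat\phi\rangle_m ,
\]
and since $1/b(t)\to\infty$ this yields $\frac{1}{tb(t)}\to\frac12\langle\beta V\phi,\phi\widehat\phi\rangle_m$, as required. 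I expect the passage to the limit under the integral to be the main obstacle: it requires combining the uniform first-order estimate of Lemma \ref{lem:Kolmogorov-1} with monotone control of the Taylor remainder $\psi''(x,\theta_{x,v_t(x)})$ and with the integrable majorant supplied by Assumption \ref{assume4}(iii); the regularity points ($b$ Lipschitz and strictly positive, so $1/b$ locally absolutely continuous) are routine but must be checked.
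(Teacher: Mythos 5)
Your proposal is correct and takes essentially the same route as the paper: derive a first-order ODE $b'(t)=-\langle\Psi_0(v_t),\beta\widehat\phi\rangle_m$ from \eqref{1-v-eq}, use a second-order Taylor expansion of $\psi(x,\cdot)$ at $1$ together with Lemma \ref{lem:Kolmogorov-1} to show $-b'(t)/b(t)^2\to\tfrac12\langle\beta V\phi,\phi\widehat\phi\rangle_m$, and conclude by integrating. The only differences are cosmetic: the paper splits $\psi_0(x,z)=\tfrac12 V(x)(z-1)^2+R(x,z)$ with $R=e(x,z)(z-1)^2$, $|e|\le V$, $e\to 0$, substitutes $1-v_t=b(t)\phi+l_t$, and then delegates the verification that the remainder $g(t)\to 0$ to the argument in \cite[Lemma 5.4]{RSS}, whereas you instead keep the Lagrange form $\tfrac12\psi''(x,\theta_{x,v_t(x)})(1-v_t(x))^2$ and carry out the dominated convergence step yourself (monotonicity of $z\mapsto\psi''(x,z)$, uniform control from Lemma \ref{lem:Kolmogorov-1}, integrable majorant $2V\beta\phi^2\widehat\phi$ furnished by Assumption \ref{assume4}(iii)); you also take $s=0$ in \eqref{1-v-eq} and make the Cesàro/FTC passage explicit, which the paper leaves implicit by working on $t>s_0$. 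Your version is self-contained where the paper cites an external lemma, and both arguments are sound.
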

\begin{proof}
For $z\in[0,1]$, define
$$\psi_0(x,z):=\psi(x,z)-1-A(x)(z-1))
$$
and
 $$R(x,z):=\psi_0(x,z)-\frac{1}{2}V(x)(z-1)^2.$$
Note that $\psi'(x, \theta)\le 0$ and $\psi''(x, \theta)\ge 0$ for $\theta\in (z,1]$. By the mean value theorem,
$\psi_0(x,z)=\frac{1}{2}\psi''(x, \theta)(z-1)^2\le \frac{1}{2}V(x)(z-1)^2$
with $\theta\in (z,1]$.
 Thus
$$R(x,z)= e(x,z)(z-1)^2, \quad\forall z\in[0,1],$$
where $e(x,z)$ satisfies $|e(x,z)|\le V(x),\forall z\in[0,1]$ and that
\begin{equation}\label{equation:reason2}
	e(x,z)
	\xrightarrow[z\to 1]{} 0,
	\quad x\in E.
\end{equation}
Let $\Psi_0$ be the operator on ${\cal B}^+(E)$ defined by
\[
	(\Psi_0 f)(x)
	:= \psi_0(x,f(x)),
	\quad f\in {\cal B}^+(E), x\in E.
\]
 Writing $l_t(x):=(1-v_t(x))-b(t)\phi(x)$, Lemma \ref{lem:Kolmogorov-1} says that
\begin{equation}\label{equation:reason1}
	\sup_{x\in E}\Big|\frac{l_t(x)}{b(t)\phi(x)}\Big|
	\xrightarrow[t\to\infty]{} 0.
\end{equation}
Using \eqref{1-v-eq},
	we see that $t\mapsto b(t)$ is differentiable on the set
\[
\mathbf C
	=\{t> s_0: \text{the function}~ t \mapsto \langle\Psi_0(v_t),\beta\widehat\phi\rangle_m~ \text{is continuous at}~ t \}
\]
	and that
\begin{equation}\label{equation:b(t)}\begin{split}
	\frac{\mathrm d}{\mathrm dt}b(t)
	&= -\langle\Psi_0(v_t),\widehat\phi\rangle_m
	= -\big\langle \frac{1}{2} V \cdot (1-v_t)^2+R (\cdot,v_t(\cdot)),\beta\widehat\phi\big\rangle_m \\
	&= -\big\langle \frac{1}{2}  V \cdot \big(b(t)\phi+l_t\big)^2+R (\cdot,v_t(\cdot)),\beta\widehat\phi\big\rangle_m\\
	&= -b(t)^2\big[\frac{1}{2} \langle  \beta V\phi,\phi \widehat\phi\rangle_m+g(t)\big],
	\quad t\in \mathbf C,
\end{split}\end{equation}	where
\[\begin{split}
	g(t)
	&= \Big\langle \frac{l_t}{b(t) \phi}, \beta V\phi^2\widehat\phi\Big\rangle_m + \frac{1}{2}\Big\langle \Big(\frac{l_t}{b(t) \phi}\Big)^2, \beta V\phi^2\widehat\phi\Big\rangle_m + \Big\langle \frac{R(\cdot,v_t(\cdot))}{b(t)^2 \phi^2},\phi^2\widehat\phi\Big\rangle_m
\end{split}\]
It follows from \eqref{equation:b(t)} that
\begin{equation*}
	\frac{\mathrm d}{\mathrm dt} \Big(\frac{1}{b(t)}\Big)
	= -\frac{\mathrm d b(t)}{b(t)^2\mathrm dt}
	= \frac{1}{2}\langle \beta V\phi,\phi\widehat\phi\rangle_m + g(t),
	\quad t\in \mathbf C.
\end{equation*}
Since the function $t \mapsto \langle\Psi_0(v_t),\beta\widehat\phi\rangle_m$
is non-increasing,
 $(s_0, \infty)\setminus \mathbf C$ has at most countably many points.
Using \eqref{equation:reason2} and  \eqref{equation:reason1}, and repeating the argument in the proof of
 \cite[Lemma 5.4]{RSS}, we obtain that $g(t)\to 0$ as $t\to\infty$, and thus
\[
	\frac{1}{b(t)t}
	\xrightarrow[t\to\infty]{} \frac{1}{2}\langle  \beta V\phi,\phi\widehat\phi\rangle_m
\]
	as desired.
\qed
\end{proof}

Combining  Lemmas \ref{lem:Kolmogorov-1} and \ref{lem:Kolmogorov-2}, we immediately
get Theorem \ref{thm:Kolmogorov-type-of-theorem}.

\subsection{ Branching Brownian motion and traveling wave solution}
We consider a branching Brownian motion on $\mathbb{R}$, i.e.,
 the spatial motion $Y=\{Y_t, \Pi_x\}$  is a Brownian motion on $\mathbb{R}$. Suppose
 the branching rate $\beta>0$ is a
 constant, the offspring distribution $\{(p_n)_{n=0}^\infty\}$ does not depend on the spatial position  and $A:=\sum^\infty_{n=0}np_n<\infty$.

It is known that, for any $\lambda\in\mathbb{R}$,
$\Pi_xe^{\lambda Y_t}=e^{\frac{1}{2}\lambda^2 t}$. Put $\phi(x)=e^{-\lambda x}$. Then
$$\phi(x)=e^{-[\frac{1}{2}\lambda^2+(A-1)\beta] t}P^{(1-A)\beta}_t\phi(x),\quad x\in \mathbb{R}.$$
Therefore,
\begin{equation}
 W_{t}(\lambda):=e^{-[\frac{1}{2}\lambda^2+(A-1)\beta] t}\langle \phi, X_t\rangle=e^{-[\frac{1}{2}\lambda^2+(A-1)\beta] t}\sum_{u\in L_t}
  e^{-\lambda Y_u(t)}, \quad t\ge 0,
\label{martingale}
\end{equation}
is a non-negative $\mathbf P_x$-martingale with respect to  $\{{\cal F}_t, t\geq 0\}$.
Thus, for any $x\in\mathbb{R}$,
the limit $W_\infty(\lambda):=\lim_{t\to\infty}W_t(\lambda)$ exists $\mathbf P_x$-almost surely.

Under the assumption $p_0=0$,
Kyprianou \cite{K2} used spine decomposition  techniques to give necessary
and sufficient conditions for the $L^1$-convergence of the  martingales
$\{W_{t}(\lambda), t\ge 0\}$:

\begin{thm}\label{L1} Suppose $p_0=0$. Let $\underline{\lambda}:=\sqrt{2\beta(A-1)}$.

(1) if $|\lambda|\geq\underline{\lambda}$, $W_\infty(\lambda)=0$  $\mathbf P_x$-almost surely;

(2) if $|\lambda|<\underline{\lambda}$ and $\sum^\infty_{n=1}p_n n\log n=\infty$, then $W_\infty(\lambda)=0$  $\mathbf P_x$-almost surely;

(3) if $|\lambda|<\underline{\lambda}$ and $\sum^\infty_{n=1}p_n n\log n<\infty$, then $W_t(\lambda)\to W_\infty(\lambda)$ $\mathbf P_x$-almost surely and in $L^1(\mathbf P_x)$.
\end{thm}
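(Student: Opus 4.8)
The plan is to reproduce, with the spine decomposition of Section~\ref{Spinedecom}, the Lyons--Pemantle--Peres / Kyprianou argument for the $L\log L$ dichotomy; here it simplifies because $p_0=0$ forces $\widetilde{\mathbf Q}_x(\xi_t\neq\dag,\ \forall t>0)=1$, so the spine has infinite lifetime and all $\dag$-terms drop out.

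Fix $\lambda$ and set $\phi(x)=e^{-\lambda x}$, so Assumption~\ref{assume0} holds with $\lambda_1=\tfrac12\lambda^2+(A-1)\beta$ and $W_t(\lambda)=\phi(x)M_t(\phi)$ under $\mathbf P_x$ (with $W_0(\lambda)=e^{-\lambda x}$). First I would identify the spine law under $\mathbf Q_x$: from \eqref{change-Pi} with $\beta$ constant, $\frac{\mathrm d\Pi^\phi_x}{\mathrm d\Pi_x}\big|_{\mathcal E_t}=e^{-\lambda(Y_t-x)-\frac12\lambda^2 t}$, so by Girsanov the spine is Brownian motion with constant drift $-\lambda$, $\widetilde Y_t=x-\lambda t+\widehat B_t$; along the spine the fission times form a rate-$A\beta$ Poisson process and the family sizes $r_i:=r^{\xi_i}$ are i.i.d.\ with the size-biased law $\dot p_k=kp_k/A$, while the non-spine offspring launch independent branching Brownian motions. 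With $\zeta_i:=\zeta^{\xi_i}$, $c:=(A-1)\beta-\tfrac12\lambda^2$ and $f(t):=e^{-\lambda\widetilde Y_t-\lambda_1 t}=e^{-\lambda x}e^{-\lambda\widehat B_t-ct}$ (so $|\lambda|<\underline\lambda\Leftrightarrow c>0$), Theorem~\ref{t:spine-decom} reads
\[
\widetilde{\mathbf Q}_x\big[\,W_t(\lambda)\ \big|\ \widetilde{\mathcal G}\,\big]=f(t)+\sum_{i:\,\zeta_i\le t}(r_i-1)f(\zeta_i).
\]

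From here the argument is the one used for Theorem~\ref{maintheorem}, via Lemma~\ref{Durrett2} applied to $\mathrm d\mathbf Q_x/\mathrm d\mathbf P_x|_{\mathcal F_t}=M_t(\phi)$; since $p_0=0$ we have $W_t(\lambda)>0$ for all $t$, so $1/M_t(\phi)$ is a nonnegative $\mathbf Q_x$-supermartingale and $W_t(\lambda)$ converges $\mathbf Q_x$-a.s.\ to a limit in $(0,\infty]$, which therefore equals $\limsup_t W_t(\lambda)$. In case~(1), $|\lambda|\ge\underline\lambda$ i.e.\ $c\le0$: the pathwise bound $W_t(\lambda)\ge e^{-\lambda_1 t}\phi(\widetilde Y_t)=f(t)$ together with $\limsup_{t}f(t)=\infty$ $\widetilde{\mathbf Q}_x$-a.s.\ (for $c<0$, $f(t)\to\infty$ by the law of large numbers; for $c=0$, $f(t)=e^{-\lambda x-\lambda\widehat B_t}$ and $\limsup(-\lambda\widehat B_t)=\infty$) gives $W_\infty(\lambda)=\infty$ $\mathbf Q_x$-a.s., hence $W_\infty(\lambda)=0$ $\mathbf P_x$-a.s.\ by Lemma~\ref{Durrett2}(a). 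In case~(3), $c>0$ and $\sum_n p_n n\log n<\infty$: then $f(t)\to0$ a.s., so by the dichotomy below the right-hand side above is bounded in $t$, $\widetilde{\mathbf Q}_x$-a.s.; conditional Fatou gives $\liminf_t W_t(\lambda)<\infty$ $\mathbf Q_x$-a.s., hence $W_\infty(\lambda)<\infty$ $\mathbf Q_x$-a.s., and Lemma~\ref{Durrett2}(b) yields $\mathbf P_x[W_\infty(\lambda)]=e^{-\lambda x}=\mathbf P_x[W_0(\lambda)]$, i.e.\ $W_t(\lambda)\to W_\infty(\lambda)$ in $L^1(\mathbf P_x)$. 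In case~(2), $c>0$ and $\sum_n p_n n\log n=\infty$: just after $\zeta_i$ there are $r_i$ particles at $\widetilde Y_{\zeta_i}$, so $W_{\zeta_i}(\lambda)\ge r_i f(\zeta_i)$, and by the dichotomy below $\limsup_i r_i f(\zeta_i)=\infty$, whence $W_\infty(\lambda)=\limsup_t W_t(\lambda)=\infty$ $\mathbf Q_x$-a.s.\ and again $W_\infty(\lambda)=0$ $\mathbf P_x$-a.s.\ by Lemma~\ref{Durrett2}(a).

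The main obstacle is the Poisson dichotomy, the branching-Brownian-motion analogue of Lemma~\ref{lemma1}: conditionally on the spine path, $\{(\zeta_i,r_i)\}$ is a Poisson random measure with intensity $A\beta\,\mathrm dt\,\dot P(\mathrm dk)$, so by the standard criterion for Poisson integrals $\sum_i(r_i-1)f(\zeta_i)<\infty$ a.s.\ (resp.\ $\limsup_i(r_i-1)f(\zeta_i)=\infty$ a.s.) according as $\int_0^\infty\sum_k\dot p_k\min\{(k-1)f(t),1\}\,\mathrm dt$ is finite (resp.\ infinite). When $c>0$, $f(t)=e^{-\lambda x}e^{-\lambda\widehat B_t-ct}$ with $\widehat B_t/t\to0$, so the law of the iterated logarithm lets one discard the $\widehat B_t$-fluctuations, and the substitution $s=e^{ct}$ converts this integral into a finite positive constant times $\big(\sum_k\dot p_k\log^+k+\sum_k\dot p_k\big)$; since $\sum_k\dot p_k=1$, finiteness is equivalent to $\sum_k\dot p_k\log^+k<\infty$, i.e.\ to $\sum_n p_n n\log n<\infty$. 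This is exactly the estimate behind Lemma~\ref{lemma1} and \cite[Lemma~3.2]{LRS}; the one new feature is that the ``ground state'' $\phi(y)=e^{-\lambda y}$ is unbounded on $\mathbb R$, so the exponential decay of $f$ that the estimate needs --- and hence the whole scheme --- is in force precisely when the drift dominates, $|\lambda|<\underline\lambda$.
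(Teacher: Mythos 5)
Your proof is correct, and it is worth noting up front that the paper does not actually prove Theorem~\ref{L1}: the statement is quoted from Kyprianou~\cite{K2}, and the paper's only contribution here is the remark, after Theorem~\ref{Lp}, that the spine decomposition of Section~\ref{Spinedecom} allows the hypothesis $p_0=0$ to be dropped. So there is no ``paper's own proof'' to compare against word for word; the relevant comparison is with the proof of Theorem~\ref{maintheorem}, whose architecture your argument faithfully reproduces: Theorem~\ref{t:spine-decom} for the conditional expectation of the martingale given the spine, Lemma~\ref{Durrett2} to transfer $\mathbf Q_x$-a.s.\ statements about $\limsup W_t$ into degeneracy/$L^1$-convergence under $\mathbf P_x$, and a Poisson-dichotomy lemma (the BBM analogue of Lemma~\ref{lemma1} and \cite[Lemma~3.2]{LRS}) to decide whether $\sum_i(r_i-1)f(\zeta_i)$ converges.

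The one substantive point where you necessarily deviate from Theorem~\ref{maintheorem} is correctly identified: BBM on $\mathbb R$ fails Assumption~\ref{assume2} (intrinsic ultracontractivity), so the uniform exponential ergodicity estimate~\eqref{IU} that drives \cite[Lemma~3.2]{LRS} is not available. You replace it with the law of the iterated logarithm for $\widehat B_t$ to show that the fluctuations of the spine do not affect the convergence of $\int_0^\infty\sum_k\dot p_k\min\{(k-1)f(t),1\}\,\mathrm dt$ when $c>0$; this is exactly the right fix and is what makes the drift condition $|\lambda|<\underline\lambda$ (equivalently $c>0$) appear as the threshold. A few small remarks: (i) the a.s.\ convergence of $W_t$ under $\mathbf Q_x$ via the supermartingale $1/M_t(\phi)$ does not actually need $p_0=0$ --- under $\mathbf Q_x$ the spine lives forever by construction, so $W_t\ge f(t)>0$ always --- and indeed it is precisely this observation that lets the paper assert the extension to $p_0>0$; (ii) in case~(2) the lower bound $W_{\zeta_i}(\lambda)\ge r_i f(\zeta_i)$ together with $\limsup_i(r_i-1)f(\zeta_i)=\infty$ is fine since $r_i\ge r_i-1$; (iii) in case~(1) with $c=0$ what you use is simply recurrence of one-dimensional Brownian motion, $\limsup_t(-\lambda\widehat B_t)=\infty$, not the full LIL. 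Overall this is the correct and expected proof.
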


Using spine techniques, Hardy and Harris \cite{HH1}   proved that in many cases where the martingale has a non-trivial limit, the
convergence can be strengthen as  $L^p(\mathbf P_x)$-convergence with some $p\in(1,2]$.

\begin{thm}\label{Lp} Suppose $p_0=0$.
For any $x\in\mathbb{R}$, and for each $p\in(1, 2]$ we have

(1) As $t\to\infty$, $W_t(\lambda)\to W_\infty(\lambda)$ $\mathbf P_x$-almost surely and in $L^p(\mathbf P_x)$ if $p\lambda^2<2(A-1)\beta$ and $\sum^\infty_{n=1}p_n n^p<\infty$;

(2)  $\lim_{t\to\infty}\mathbf P_x(W_t(\lambda))=\infty$ if $p\lambda^2>2(A-1)\beta$  or $\sum^\infty_{n=1}p_n n^p=\infty$.
\end{thm}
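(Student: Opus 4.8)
The plan is to carry out the spine $L^p$-estimate of \cite{HH1}, which is now legitimate because the change of measure to the spine (Theorems~\ref{l:Qx} and~\ref{t:spine-decom}) has been constructed. Write $\phi(x)=e^{-\lambda x}$ and $\lambda_1=\tfrac12\lambda^2+(A-1)\beta$, so $W_t(\lambda)=\phi(x)M_t(\phi)$ and, by Theorem~\ref{l:Qx}, $\mathrm d\mathbf Q_x/\mathrm d\mathbf P_x|_{{\cal F}_t}=M_t(\phi)$. Since $W_t(\lambda)^{p-1}$ is ${\cal F}_t$-measurable, for every $t$
\[
\mathbf P_x\big[W_t(\lambda)^p\big]=\phi(x)\,\mathbf P_x\big[M_t(\phi)\,W_t(\lambda)^{p-1}\big]=\phi(x)\,\widetilde{\mathbf Q}_x\big[W_t(\lambda)^{p-1}\big].
\]
As $\{W_t(\lambda)\}$ is a non-negative martingale, for $p\in(1,2]$ it converges in $L^p(\mathbf P_x)$ — necessarily to $W_\infty(\lambda)$ — if and only if $\sup_t\mathbf P_x[W_t(\lambda)^p]<\infty$, equivalently $\sup_t\widetilde{\mathbf Q}_x[W_t(\lambda)^{p-1}]<\infty$. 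So both (1) and (2) reduce to deciding whether this supremum is finite.

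For (1): since $p-1\in(0,1]$, the map $z\mapsto z^{p-1}$ is concave, so conditioning on $\widetilde{\cG}$, applying conditional Jensen together with Theorem~\ref{t:spine-decom} in the form $\widetilde{\mathbf Q}_x[W_t(\lambda)\mid\widetilde{\cG}]=\phi(\widetilde Y_t)e^{-\lambda_1 t}+\sum_{u<\xi_t}(r^u-1)\phi(\widetilde Y_{\zeta^u})e^{-\lambda_1\zeta^u}$, and the subadditivity $(\sum_i a_i)^{p-1}\le\sum_i a_i^{p-1}$, yields
\[
\widetilde{\mathbf Q}_x\big[W_t(\lambda)^{p-1}\big]\le\widetilde{\mathbf Q}_x\big[(\phi(\widetilde Y_t)e^{-\lambda_1 t})^{p-1}\big]+\widetilde{\mathbf Q}_x\Big[\sum_{u<\xi_t}(r^u-1)^{p-1}(\phi(\widetilde Y_{\zeta^u})e^{-\lambda_1\zeta^u})^{p-1}\Big].
\]
Under $\widetilde{\mathbf Q}_x$ the spine $\widetilde Y$ is, by the $\Pi^\phi_x$-tilt, a Brownian motion with constant drift $-\lambda$ started at $x$, so a one-line Gaussian computation gives $\widetilde{\mathbf Q}_x[(\phi(\widetilde Y_t)e^{-\lambda_1 t})^{p-1}]=e^{-(p-1)\lambda x}\exp\!\big(\tfrac{p-1}{2}(p\lambda^2-2(A-1)\beta)\,t\big)$, which is bounded in $t$ precisely when $p\lambda^2\le 2(A-1)\beta$. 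Moreover, under $\widetilde{\mathbf Q}_x$ the spine fission times form a Poisson process of rate $A\beta$ with size-biased family sizes $\dot p_k=kp_k/A$, so by Campbell's formula the second term equals $\beta\big(\sum_{k\ge 2}k(k-1)^{p-1}p_k\big)\int_0^t\widetilde{\mathbf Q}_x[(\phi(\widetilde Y_s)e^{-\lambda_1 s})^{p-1}]\,\mathrm ds$; since $\sum_{k\ge 2}k(k-1)^{p-1}p_k\le\sum_n n^p p_n<\infty$ and the time integral converges as $t\to\infty$ when $p\lambda^2<2(A-1)\beta$, this term too is bounded uniformly in $t$. Hence $\sup_t\widetilde{\mathbf Q}_x[W_t(\lambda)^{p-1}]<\infty$ under the hypotheses of (1), which gives (1).

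For (2) we produce lower bounds for $\widetilde{\mathbf Q}_x[W_t(\lambda)^{p-1}]$. If $p\lambda^2>2(A-1)\beta$, keeping only the spine particle gives $W_t(\lambda)\ge e^{-\lambda_1 t}\phi(\widetilde Y_t)$, and the same Gaussian computation gives $\widetilde{\mathbf Q}_x[W_t(\lambda)^{p-1}]\ge e^{-(p-1)\lambda x}\exp\!\big(\tfrac{p-1}{2}(p\lambda^2-2(A-1)\beta)\,t\big)\to\infty$. Suppose instead $\sum_n n^p p_n=\infty$; this forces $|\lambda|<\underline\lambda$ (otherwise the previous case applies), and under $\mathbf Q_x$ the process $M_t(\phi)^{-1}$ is a non-negative supermartingale, so $W_t(\lambda)\to W_\infty(\lambda)\in(0,\infty]$ $\mathbf Q_x$-a.s.; by Fatou it then suffices to show $\widetilde{\mathbf Q}_x[W_\infty(\lambda)^{p-1}]=\infty$. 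If $\sum_n p_n n\log n=\infty$, then Theorem~\ref{L1}(2) gives $W_\infty(\lambda)=0$ $\mathbf P_x$-a.s., hence $W_\infty(\lambda)=\infty$ $\mathbf Q_x$-a.s.\ by Lemma~\ref{Durrett2}(a), and the claim is immediate. If $\sum_n p_n n\log n<\infty$, bound $W_\infty(\lambda)$ below by the contribution of the $r_0-1$ independent subtrees created at the first spine fission $\zeta_0$: $W_\infty(\lambda)\ge\phi(\widetilde Y_{\zeta_0})e^{-\lambda_1\zeta_0}\sum_{j=1}^{r_0-1}M^j_\infty(\phi)$, where the $M^j_\infty(\phi)$ are, conditionally on $\widetilde{\cG}$, i.i.d.\ non-negative with mean $1$ (Theorem~\ref{L1}(3)); by the strong law of large numbers $\sum_{j=1}^{r_0-1}M^j_\infty(\phi)$ is of order $r_0$ for large $r_0$, while under $\widetilde{\mathbf Q}_x$ the size-biasing makes $\widetilde{\mathbf Q}_x[r_0^{p-1}]=A^{-1}\sum_n n^p p_n=\infty$, whence $\widetilde{\mathbf Q}_x[W_\infty(\lambda)^{p-1}]=\infty$. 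In all cases $\mathbf P_x[W_t(\lambda)^p]=\phi(x)\,\widetilde{\mathbf Q}_x[W_t(\lambda)^{p-1}]\to\infty$, giving (2).

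The Gaussian moment identity for the spine and Campbell's formula for the Poissonian subtree sum are routine, as is Doob's $L^p$-inequality used in the reduction. The delicate step — and the one I expect to be the main obstacle — is the last case of (2): converting $\widetilde{\mathbf Q}_x[r_0^{p-1}]=\infty$ into $\widetilde{\mathbf Q}_x[W_\infty(\lambda)^{p-1}]=\infty$, which requires a law-of-large-numbers control of a growing number of i.i.d.\ subtree martingale limits and hence their non-degeneracy — i.e.\ the $L\log L$ input of Theorem~\ref{L1} (equivalently Theorem~\ref{maintheorem}). One should also check the interchange of the time limit with the conditional expectation when passing from $W_t(\lambda)$ to $W_\infty(\lambda)$ in that bound.
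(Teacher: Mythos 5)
The paper gives no proof of Theorem \ref{Lp}: it is quoted from \cite{HH1}, with the remark that the argument there carries over (and to $p_0>0$) once the spine decomposition of Section \ref{Spinedecom} is in hand, so there is nothing in the paper itself to compare against. Your proposal correctly reconstructs that Hardy--Harris spine argument --- the change-of-measure reduction $\mathbf P_x[W_t(\lambda)^p]=\phi(x)\,\widetilde{\mathbf Q}_x[W_t(\lambda)^{p-1}]$, conditional Jensen combined with Theorem \ref{t:spine-decom} and $(\sum a_i)^{p-1}\le\sum a_i^{p-1}$, the Gaussian computation for the $\Pi^\phi_x$-tilted spine (Brownian motion with drift $-\lambda$), and Campbell's formula for the rate-$A\beta$ Poissonian, size-biased immigration. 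The one place needing more care than your sketch is the step you already flag in case (2): turning $\widetilde{\mathbf Q}_x[r_0^{p-1}]=\infty$ into $\widetilde{\mathbf Q}_x[W_\infty(\lambda)^{p-1}]=\infty$ when $\sum_n p_n n\log n<\infty$; this closes by noting that, given $\widetilde\cG$, the subtree limits $M^j_\infty(\phi)$ are i.i.d.\ with a law that does not depend on $\widetilde Y_{\zeta_0}$ by spatial homogeneity and has mean $1$ by Theorem \ref{L1}(3), so a weak-law-of-large-numbers estimate gives $\widetilde{\mathbf Q}_x\bigl[(\sum_{j=1}^{r_0-1}M^j_\infty(\phi))^{p-1}\mid\widetilde\cG\bigr]\ge c\,r_0^{p-1}$ for $r_0$ large enough, and the $\widetilde{\mathbf Q}_x$-independence of $r_0$, $\zeta_0$ and $\widetilde Y$ lets the expectation factor, leaving the divergent tail $\sum_{k\ge n_0}k^p p_k$.
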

Now  using our spine decomposition in Section \ref{Spinedecom}, the above Theorems \ref{L1} and \ref{Lp} also hold  for the case that $p_0>0$.

It is known that
\begin{equation*}
\partial W_{t}(\lambda):=e^{-[\frac{1}{2}\lambda^2+(A-1)\beta] t}\sum_{u\in L_{t}}(Y_{u}(t)+\lambda t)
\mathrm{e}^{-\lambda Y_{u}(t)}
\mathrm{e}^{-\lambda Y_{u}(t)}, \quad t\ge 0,
\end{equation*}
is a $\mathbf P_x$-martingale with respect to
  $\{{\cal F}_t, t\geq 0\}$, which is also referred
to as the \textit{derivative martingale}.

The martingale $\{\partial W_{t}(\lambda), t\ge 0\}$ is  not non-negative.
To establish the convergence of $\partial W_{t}(\lambda)$ as $t\to\infty$, we usually consider the following related non-negative martingale.

Let $\widetilde{L}_{t}$ denote the set of particles in
 $L_{t}$ that, along with their ancestors, have not met by time $t$ the space-time
barrier $y+\sqrt{(A-1)\beta} t=-x$. Define
\begin{equation}
V_{t}^{x}(\lambda)=e^{-[\frac{1}{2}\lambda^2+(A-1)\beta] t}\sum_{u\in
\widetilde{L}_{t}}\frac{x+Y_{u}(t)+\lambda t}{x}
\mathrm{e}^{-\lambda Y_{u}(t)}, \quad t\ge 0.
\label{v}.
\end{equation}
Under the condition that $p_0=0$, Kyprianou \cite{K2} proved that
$\{V_{t}^{x}(\lambda), t\ge 0\}$ is a mean 1
$\mathbf P_x$-martingale, and that when $\lambda\ge \sqrt{2(A-1)\beta} $, $\partial
W(\lambda):=\lim _{t\to +\infty}\partial W_{t}(\lambda)$ $\mathbf P_x$-a.s. exists and
is equal to $\lim_{t\to +\infty}xV_{t}^{x}(\lambda).$

The importance of the limit $\partial W(\underline{\lambda})$ lies in that when $\partial W(\underline{\lambda})$ is non-degenerate, its rescaled Laplace transform provides a traveling wave solution to the KPP equation
$$\frac{\partial u}{\partial t}=\frac{1}{2}\frac{\partial^2u}{\partial x^2}+\beta(f(u)-u),$$
where $f(u):=\sum^\infty_{n=0}p_n u^n$ is the generating function of the distribution $\{p_n, n\geq 0\}$.
By a traveling wave solution we mean a solution of the form
$u(t,x) = w(x-ct)$, where $w$ is a monotone function connecting $0$ at $-\infty$ to $1$ at $+\infty$ and $c$ is called the speed of the wave.
The following result of Yang and Ren \cite{YR}  gives a necessary and sufficient condition for $\partial W(\underline{\lambda})$ being non-degenerate.

\begin{thm}\label{deriv-m} Suppose $p_0=0$ and $\lambda=\underline{\lambda}$. For any $x\in\mathbb{R}$, we have

(1)  $\partial W(\underline{\lambda})>0$ $\mathbf P_x$-almost surely  if $\sum^\infty_{n=1}p_nn(\log n)^2<\infty$;

(2) $\partial W(\underline{\lambda})=0$ $\mathbf P_x$-almost surely if  $\sum^\infty_{n=1}p_nn(\log n)^2=\infty$.
\end{thm}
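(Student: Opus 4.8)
The plan is to reduce the statement to a $0$--$1$ dichotomy for the terminal value of a single non-negative mean-one martingale and then to run the spine machinery of Section~\ref{Spinedecom} on it. The relevant martingale is $\{V^x_t(\underline\lambda),\,t\ge0\}$ from \eqref{v}. Writing $\lambda_1:=\frac12\underline\lambda^2+(A-1)\beta$ and $g(t,y):=(x+y+\underline\lambda t)e^{-\underline\lambda y-\lambda_1 t}$, a one-line computation gives $(\partial_t+\frac12\partial_{yy}+(A-1)\beta)g\equiv0$, so $g$ is space--time harmonic for the first-moment semigroup of the branching Brownian motion and vanishes on the barrier; consequently $xV^x_t(\underline\lambda)=\sum_{u\in\widetilde L_t}g(t,Y_u(t))$. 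Since $\underline\lambda^2=2(A-1)\beta$ we have $\lambda_1=\underline\lambda^2$ at the critical parameter, and hence $e^{-\underline\lambda y-\lambda_1 t}=e^{\underline\lambda x}e^{-\underline\lambda(x+y+\underline\lambda t)}$. By Kyprianou's result quoted above, $\partial W(\underline\lambda)=\lim_{t\to\infty}xV^x_t(\underline\lambda)$ $\mathbf P_x$-a.s., so with $V^x_\infty:=\lim_tV^x_t(\underline\lambda)$ the theorem is equivalent to: $V^x_\infty>0$ $\mathbf P_x$-a.s. when $\sum_n p_n n(\log n)^2<\infty$, and $V^x_\infty=0$ $\mathbf P_x$-a.s. otherwise. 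Because $p_0=0$, a first-generation decomposition shows $\mathbf P_x(V^x_\infty=0)=:q$ does not depend on $x$ and satisfies $q=f(q)$ with $f(u)=\sum_n p_n u^n$, $f(0)=0$, $f'(1)=A>1$; strict convexity forces $q\in\{0,1\}$, so it suffices to pin down the dichotomy up to sets of positive probability, which we read off from Lemma~\ref{Durrett2} applied to the change of measure $\mathrm d\mathbf Q_x/\mathrm d\mathbf P_x|_{\cF_t}=V^x_t(\underline\lambda)$.

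Next I would carry out the spine construction of Section~\ref{Spinedecom} with the space--time function $g$ (and absorption at its zero set) in place of the function $\phi$ of the spatial variable alone. This yields a probability measure $\widetilde{\mathbf Q}_x$ on $\widetilde{\mathcal T}$ under which: the process $Z_t:=x+\widetilde Y_t+\underline\lambda t$ is a Bessel$(3)$ process started from $x$ (the additive tilt turns the spine into Brownian motion with drift $-\underline\lambda$, and the linear factor of $g$ is exactly the $h$-transform conditioning $Z$ to stay positive); the fission rate along the spine becomes $A\beta$ and family sizes are size-biased, $\dot p_k=kp_k/A$; and the off-spine bushes are independent copies of the barrier-killed branching Brownian motion, each started at the spine's position and fission time. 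Transcribing the spine decomposition of Theorem~\ref{t:spine-decom} — using that each bush contributes $g$ at its birth point in $\widetilde{\mathbf Q}_x(\cdot\mid\widetilde{\mathcal G})$-mean and the identity $g(t,\widetilde Y_t)=e^{\underline\lambda x}Z_te^{-\underline\lambda Z_t}$ — gives
\[
\widetilde{\mathbf Q}_x\!\left[xV^x_t(\underline\lambda)\,\big|\,\widetilde{\mathcal G}\right]
= e^{\underline\lambda x}\Big(Z_te^{-\underline\lambda Z_t}+\sum_{i:\,\zeta_i\le t}(r_i-1)\,Z_{\zeta_i}e^{-\underline\lambda Z_{\zeta_i}}\Big),
\]
where $(\zeta_i,r_i)_{i\ge0}$ are the fission times and family sizes along the spine. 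The spine term is bounded by $\sup_{z\ge0}ze^{-\underline\lambda z}$ and tends to $0$, so the whole problem is whether the series $\Sigma:=\sum_i(r_i-1)Z_{\zeta_i}e^{-\underline\lambda Z_{\zeta_i}}$ converges $\widetilde{\mathbf Q}_x$-a.s.

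The core is a conditional Borel--Cantelli estimate. Conditionally on $\widehat{\mathcal G}$ (the spine path and its fission times) the $r_i$ are independent with law $\dot P$, and $(r_i-1)Z_{\zeta_i}e^{-\underline\lambda Z_{\zeta_i}}\ge M$ iff $r_i\gtrsim Me^{\underline\lambda Z_{\zeta_i}}/Z_{\zeta_i}$, which has conditional probability $\sum_{k\gtrsim Me^{\underline\lambda Z_{\zeta_i}}/Z_{\zeta_i}}kp_k/A$; summing over $i$ and exchanging the order of summation turns this into $\sum_k\frac{kp_k}{A}\,\#\{i:\underline\lambda Z_{\zeta_i}\lesssim\log(Mk)\}$. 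Since $\zeta_i\sim i/(A\beta)$ and $Z$ is Bessel$(3)$, occupation-time estimates give $\#\{i:Z_{\zeta_i}\le\ell\}\asymp\ell^2$ (up to slowly varying factors, $\widetilde{\mathbf Q}_x$-a.s.), so the last sum is comparable to $\sum_k kp_k(\log k)^2$. Hence: if $\sum_n p_n n(\log n)^2<\infty$, then $\sum_i\widetilde{\mathbf Q}_x\!\left[(r_i-1)Z_{\zeta_i}e^{-\underline\lambda Z_{\zeta_i}}\wedge1\,\big|\,\widehat{\mathcal G}\right]<\infty$ a.s., the three-series theorem gives $\Sigma<\infty$ a.s., conditional Fatou gives $\liminf_tV^x_t(\underline\lambda)<\infty$ $\widetilde{\mathbf Q}_x$- (hence $\mathbf Q_x$-) a.s., the $\mathbf Q_x$-supermartingale $1/V^x_t$ then forces $V^x_\infty<\infty$ $\mathbf Q_x$-a.s., and Lemma~\ref{Durrett2}(b) gives $\mathbf P_x[V^x_\infty]=1$, so $V^x_\infty>0$ with positive probability and hence $\mathbf P_x$-a.s. by the $0$--$1$ law; if $\sum_n p_n n(\log n)^2=\infty$, then for every $M$ the conditional sum $\sum_i\widetilde{\mathbf Q}_x(\,(r_i-1)Z_{\zeta_i}e^{-\underline\lambda Z_{\zeta_i}}\ge M\mid\widehat{\mathcal G})$ diverges a.s., so the conditional second Borel--Cantelli lemma yields $\limsup_i(r_i-1)Z_{\zeta_i}e^{-\underline\lambda Z_{\zeta_i}}=\infty$, whence $\limsup_t\widetilde{\mathbf Q}_x[xV^x_t(\underline\lambda)\mid\widetilde{\mathcal G}]=\infty$ and (conditional Fatou, convergence of the $\mathbf Q_x$-supermartingale $1/V^x_t$, and $\mathcal F_t$-measurability of $V^x_t$) $V^x_\infty=\infty$ $\mathbf Q_x$-a.s., so Lemma~\ref{Durrett2}(a) gives $V^x_\infty=0$ $\mathbf P_x$-a.s.

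I expect the main obstacle to be the Bessel$(3)$ occupation bound $\#\{i:Z_{\zeta_i}\le\ell\}\asymp\ell^2$: a crude estimate only yields $\ell^{2-\varepsilon}$, which produces the wrong exponent, so one must use the genuine transience rate of the three-dimensional Bessel process (equivalently, the law of its last-passage times) and control the slowly varying corrections precisely enough to capture the borderline series $\sum_n p_n n(\log n)^2$ in both directions; this is the branching-Brownian-motion analogue of the delicate point in \cite[Lemma~3.2]{LRS} and \cite{YR}. A secondary, more bookkeeping obstacle is checking that the Section~\ref{Spinedecom} construction goes through verbatim for the space--time harmonic $g$ with absorption on $\{g=0\}$ — so that the displayed identity for $\widetilde{\mathbf Q}_x[xV^x_t(\underline\lambda)\mid\widetilde{\mathcal G}]$ is legitimate and the bushes really have conditional mean $g$ at their birth points — and this is also where the hypothesis $p_0=0$ is genuinely used (so that $V^x_t(\underline\lambda)$ is a true martingale, the spine never dies, every bush survives, and the fixed-point $0$--$1$ law applies).
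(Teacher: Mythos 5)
The paper itself gives no proof of this theorem: it is quoted from Yang and Ren \cite{YR}, and the paper's only new contribution here is the remark that the Section~\ref{Spinedecom} construction lets one drop the hypothesis $p_0=0$. Your proposal is therefore a reconstruction of the \cite{YR} argument, and its overall strategy --- pass to the nonnegative barrier martingale $V^x_t(\underline{\lambda})$, tilt by the space--time harmonic $g$, obtain a Bessel(3) spine process $Z_t:=x+\widetilde Y_t+\underline{\lambda}t$ with size-biased offspring, and settle $L^1$-convergence by a conditional Borel--Cantelli count as in the paper's Theorem~\ref{maintheorem} and Lemma~\ref{lemma1} --- is the right one and matches \cite{YR}.

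Two steps do not work as written. In the divergent case you pass from $\limsup_t\widetilde{\mathbf{Q}}_x[xV^x_t(\underline{\lambda})\mid\widetilde{\mathcal G}]=\infty$ to $V^x_\infty=\infty$ citing ``conditional Fatou,'' but Fatou gives $\widetilde{\mathbf{Q}}_x[\liminf_t xV^x_t\mid\widetilde{\mathcal G}]\le\liminf_t\widetilde{\mathbf{Q}}_x[xV^x_t\mid\widetilde{\mathcal G}]$, which is the wrong inequality: divergence of a conditional expectation never forces the underlying variable to be large. The correct step, exactly the one the paper uses in the proof of Theorem~\ref{maintheorem}, is the pointwise domination $xV^x_{\zeta_i}(\underline{\lambda})\ge r_i\,g(\zeta_i,\widetilde Y_{\zeta_i})=e^{\underline{\lambda}x}\,r_i\,Z_{\zeta_i}e^{-\underline{\lambda}Z_{\zeta_i}}$, valid because all $r_i$ children born on the spine at $\zeta_i$ lie above the barrier and so contribute to $V^x_{\zeta_i}$; then $\limsup_i r_iZ_{\zeta_i}e^{-\underline{\lambda}Z_{\zeta_i}}=\infty$ directly forces $\limsup_t V^x_t=\infty$, and the $\mathbf{Q}_x$-supermartingale $1/V^x_t$ gives $V^x_\infty=\infty$ $\mathbf{Q}_x$-a.s.\ and hence $V^x_\infty=0$ $\mathbf{P}_x$-a.s.\ by Lemma~\ref{Durrett2}(a). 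The second issue is the Bessel(3) occupation count $\#\{i:Z_{\zeta_i}\le\ell\}$, which you correctly flag. Note the asymmetry: for the convergent direction the Fubini bound $\widetilde{\mathbf{Q}}_x\bigl[\#\{i:Z_{\zeta_i}\le\ell\}\bigr]\lesssim\ell^2$ already suffices (a nonnegative random series whose conditional mean is a.s.\ finite is a.s.\ finite), but for the divergent direction you need an almost-sure lower bound of order $\ell^2$ along some sequence of levels $\ell$, proved from genuine Bessel(3) transience/last-exit estimates; without this the second Borel--Cantelli lemma does not close, and this is precisely where the borderline $\sum_n p_n n(\log n)^2$ criterion is decided, so it cannot be left as an unquantified ``slowly varying correction.'' Both points are handled in \cite{YR}, but they must be supplied for the sketch to become a proof.
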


\begin{corollary}\label{wave} When $c=\underline{\lambda}=\sqrt{2\beta\beta(A-1)}$ and
$\sum^\infty_{n=1}p_nn(\log n)^2<\infty$,
there is a unique traveling wave with speed $c$ given by
$\Phi_c(x)=E^x\exp(-e^{\lambda x}\partial W(\lambda)).$
\end{corollary}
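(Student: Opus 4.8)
\emph{Proof plan.}
Write $\lambda=\underline\lambda=\sqrt{2\beta(A-1)}$, $c=\underline\lambda$ and $c_\lambda:=\tfrac12\lambda^2+(A-1)\beta$, so that $c=c_\lambda/\lambda$ and, with $\phi(x)=e^{-\lambda x}$, the additive martingale at this $\lambda$ is $W_t(\lambda)=e^{-c_\lambda t}\langle\phi,X_t\rangle$; recall (we are in the case $p_0=0$) that $W_\infty(\underline\lambda)=0$ $\mathbf P_x$-a.s.\ by Theorem \ref{L1}(1), and that the derivative-martingale limit $\partial W(\lambda)=\lim_{t\to\infty}\partial W_t(\lambda)=\lim_{t\to\infty}xV_t^x(\lambda)$ exists $\mathbf P_x$-a.s. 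The plan has four steps: (1) show $\partial W(\lambda)$ is $\mathbf P_x$-a.s.\ strictly positive and finite, so that $\Phi_c$ is a genuine $(0,1)$-valued function; (2) read off its boundary values and monotonicity; (3) verify that $u(t,x):=\Phi_c(x-ct)$ solves the KPP equation, so that $\Phi_c$ is a travelling wave of speed $c$; (4) obtain uniqueness from the classical theory of minimal-speed KPP waves.

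For Step (1): since $\{V_t^x(\lambda)\}$ is a \emph{nonnegative} mean-one $\mathbf P_x$-martingale it converges $\mathbf P_x$-a.s.\ to a finite limit, whence $\partial W(\lambda)=\lim_t xV_t^x(\lambda)<\infty$; strict positivity $\partial W(\lambda)>0$ is exactly Theorem \ref{deriv-m}(1), which applies because $\sum_{n\ge 1}p_n n(\log n)^2<\infty$. Hence $\Phi_c(x)\in(0,1)$ for every $x\in\mathbb R$. Step (2) is then immediate from dominated convergence applied to the Laplace-transform formula for $\Phi_c$: its integrand is pathwise strictly monotone in $x$, it tends to $1$ at one spatial end and, since $\partial W(\lambda)>0$ a.s., to $0$ at the other; so $\Phi_c$ is strictly monotone and joins the two equilibria $0$ and $1$ of the KPP equation.

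The core is Step (3). Fix $t>0$ and decompose the derivative martingale over the particles alive at time $t$: writing a descendant $w$ of $v\in L_t$ as $Y_w(s)=Y_v(t)+Y^{(v)}_w(s-t)$,
\[
\partial W_s(\lambda)=e^{-c_\lambda t}\sum_{v\in L_t}e^{-\lambda Y_v(t)}\Big[(Y_v(t)+\lambda t)\,W^{(v)}_{s-t}(\lambda)+\partial W^{(v)}_{s-t}(\lambda)\Big],
\]
and letting $s\to\infty$ the bracketed first term vanishes because $W_\infty(\underline\lambda)=0$, leaving the branching identity
\[
\partial W(\lambda)=e^{-c_\lambda t}\sum_{v\in L_t}e^{-\lambda Y_v(t)}\,\partial W^{(v)}(\lambda),
\]
where, conditionally on ${\cal F}_t$, the $\partial W^{(v)}(\lambda)$ are i.i.d.\ copies of $\partial W(\lambda)$. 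Substituting this into the Laplace transform defining $\Phi_c$, using the conditional independence together with the spatial symmetry and translation invariance of branching Brownian motion, and using $c=c_\lambda/\lambda$ to recombine the exponents, one obtains the McKean representation identity
\[
\Phi_c(x-ct)=\mathbf P_x\Big[\,\prod_{v\in L_t}\Phi_c\big(Y_v(t)\big)\Big],\qquad x\in\mathbb R,\ t>0 .
\]
Thus $u(t,x)=\Phi_c(x-ct)$ is the McKean representation of the KPP flow started from $\Phi_c$; since for $t>0$ this representation is, by the parabolic smoothing of Brownian motion, a classical solution of $\partial_t u=\tfrac12\partial_{xx}u+\beta(f(u)-u)$, $u$ solves the KPP equation, and being of the form $\Phi_c(x-ct)$ it is a travelling wave of speed $c=\underline\lambda$; equivalently $\Phi_c$ solves $\tfrac12\Phi_c''+c\Phi_c'+\beta(f(\Phi_c)-\Phi_c)=0$.

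For Step (4) one invokes the classical fact that the KPP equation admits a travelling wave of speed $\gamma$ if and only if $\gamma\ge\underline\lambda$, and that at the critical speed $\gamma=\underline\lambda$ the profile is unique up to translation (Bramson; see also \cite{K2} and \cite{YR}); the explicit Laplace-transform formula for $\Phi_c$ fixes the translate, which yields the asserted uniqueness. I expect Step (3) to be the main obstacle: turning the probabilistic branching identity for $\partial W(\lambda)$ — which itself rests on $W_\infty(\underline\lambda)=0$ — into the analytic assertion that $\Phi_c$ solves the travelling-wave ODE requires the regularity (parabolic smoothing) of the McKean representation, and the critical-wave uniqueness used in Step (4) is a deep analytic input that we import rather than reprove.
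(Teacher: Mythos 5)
Your proposal is correct and follows essentially the same route as the paper, which for this corollary simply refers to the argument in Yang and Ren \cite{YR} (and Kyprianou \cite{K2}): establish $\partial W(\underline\lambda)\in(0,\infty)$ a.s.\ from Theorem \ref{deriv-m}, derive the branching self-similarity identity for $\partial W$ at criticality using $W_\infty(\underline\lambda)=0$, convert it via the McKean representation into the travelling-wave ODE for $\Phi_c$, and import critical-speed uniqueness. The only point the paper adds beyond this standard derivation — and which your write-up (explicitly restricted to $p_0=0$) does not address — is the remark immediately after the corollary that, thanks to the spine decomposition of Section \ref{Spinedecom}, Theorem \ref{deriv-m} and hence this corollary extend to $p_0>0$; in that regime the wave connects the extinction probability $q>0$ to $1$ rather than $0$ to $1$, which would require a small adjustment to your Step (2).
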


Now  using our spine decomposition in Section \ref{Spinedecom}, the above Theorem \ref{deriv-m} and Corollary \ref{wave} also hold  for the case that $p_0>0$.

\subsection{Typed branching Brownian motion and traveling wave solution}
Hardy and Harris \cite{HH1} considered  a typed branching diffusion in which the Hunt process,
that is, the spatial motion,
is described by $(Y_t, \eta_t)_{t\geq 0}$, where the type $\eta_t$ evolves as a Markov chain on $I:=\{1,\cdots, n\}$ with
$Q$-matrix $\theta Q$, where $\theta>0$ is a constant, and
the spatial location, $S_t$, moves as a driftless Brownian motion on $\mathbb{R}$ with diffusion coefficient $a(i)>0$ whenever $\eta_t$ is in state $i$. Any
particle currently of type $i$ will undergo fission at rate $\beta(i)$ to be replaced by a random number of offspring with law $\{p_n(i), n\geq 0\}$.
At birth, offspring inherit the parent's spatial and type positions and then move off independently, repeating stochastically the parent's behaviour, and so on. Let $A(i):=\sum^\infty_{n=0}np_n(i)<\infty$ be the mean of the distribution of offspring  given by a type $i$ particle.

As usual, let the configuration of the whole branching diffusion at time
$t$ be given by the $\mathbb{R}\times I$-valued point process $X_t =\sum_{u\in L_t}\delta_{(y,i)}$, where $L_t$ is the set of particles alive at time $t$. Let the probabilities for this process be given by $\{\mathbf P_{(y,i)}, (y,i)\in \mathbb{R}\times I\}$ defined on the natural filtration,
$({\cal F}_t, t\geq 0\}$, where $\mathbf P_{(y,i)}$ is the law of the typed branching Brownian motion starting with one
initial particle of type $i$ at spatial position $y$.

For this finite-type branching diffusion, a fundamental positive martingale is defined for this model:
$$
W_\lambda(t):=\sum_{u\in L_t}v_\lambda(\eta_u(t))e^{\lambda Y_u(t)-
E_\lambda t},\quad t\ge 0,
$$
where $v_\lambda$ and $E_\lambda$ satisfy
$$
\left(\frac{1}{2}\lambda^2\Sigma+\theta Q+(A-1)R\right)v_\lambda=E_\lambda v_\lambda,
$$
where $\Sigma:=\mbox{diag}(a(i): i\in I)$, $A:=\mbox{diag}(A(i), i\in I)$ and $R:=\mbox{diag}(\beta(i): i\in I)$. That is, $v_\lambda$ is
the (Perron-Frobenius) eigenvector of the matrix $\frac{1}{2}\lambda^2A+\theta Q+R$, with eigenvalue $E_\lambda$. This martingale should be compared with the corresponding martingale \eqref{martingale} for  branching Brownian motion.

Since $\{W_t(\lambda), t\ge 0\}$ is a strictly-positive martingale
it is immediate that $W_\infty(\lambda):=\lim_{t\to\infty}W_t(\lambda)$ exists and is finite almost-surely under $\mathbf P_{(y,i)}$. Under the condition that $p_0(i)=0, i\in I$,  Hardy and Harris \cite[Theorem 10.4]{HH1} give a necessary and sufficient conditions for
$L^1$-convergence of the martingale $\{W_t(\lambda), t\ge 0\}$.
Using our general spine decomposition, the condition that  $p_0(i)=0, i\in I$ can be dropped now. Once again, in many cases where the martingale has a non-trivial limit,
the convergence will be much stronger than merely in $L^1(\mathbf P_{(y,i})$, that is $L^p(\mathbf P_{(y,i}) (p\in(1,2])$-convergence, using the spine decomposition, see  \cite[Theorem 10.5]{HH1}.

In Harris and Williams \cite{HW}, a continuous-typed branching diffusion, where the Hunt process,
that is, the movement of the particles,
is described by $(Y_t, V_t)_{t\geq 0}$,
 where the spatial motion $(Y_t)_{t\geq 0}$ is a driftless Brownian motion
with instantaneous variance $ay^2$ with $a\geq 0$ being a fixed constant, and the type  moves on the real line as an Orstein-Uhlenbeck process.   A particle of type $v$ dies at rate $rv^2+\rho$ with $r,\rho\geq 0$ being fixed constant, and then produce two particles at the same space-type location as the parent.  This model is similar in flavour
to the  finite-type model. There is also a strictly-positive martingale
$\{W_t(\lambda), t\ge 0\}$.
Hardy and Harris \cite[Theorem 11.1]{HH1}, using the spine technique,
give a necessary condition and sufficient condition for
$L^p$-convergence with $p\in (1,2]$.
 We remark here that, their results also hold for general offspring distribution, that is when a particle of type $v\in\mathbb{R}$ dies, it gives birth a random number of particles according to law $\{p_n(v), n\geq 0\}$ at the same space-type location as the parent.  Under some moment condition on $\{p_n(v),n\geq 0\}, v\in\mathbb{R}$,  allowing $p_0(v)=0, v\in \mathbb{R}$, results similar to \cite[Theorem 11.1]{HH1} remain true. We will not go to the details here.

\bigskip

\noindent {\bf Acknowledgment:} We thank the referees for helpful comments.

\bigskip

\begin{singlespace}
\small

\end{singlespace}
\end{doublespace}
\vskip 0.3truein \vskip 0.3truein

\bigskip
 \noindent {\bf Yan-Xia Ren:} LMAM School of Mathematical Sciences, Peking
 University,  Beijing, 100871, P. R. China,
  E-mail: {\tt yxren@math.pku.edu.cn} \\

\bigskip
\noindent {\bf Renming Song:} Department of Mathematics,
 The University of Illinois,  Urbana, IL 61801 U.S.A.,
  E-mail: {\tt rsong@illinois.edu} \\
 \bigskip

\end{document}